\newtheorem{theo}{Theorem}
\newtheorem{lemma}{Lemma}[section]
\newtheorem{coro}[theo]{Corollary}
\newtheorem{definition}[lemma]{Definition}
\newtheorem{prop}[lemma]{Proposition}
\newtheorem{claim}[lemma]{Claim}
\newtheorem{fact}[lemma]{Fact}
\renewenvironment{proof}[1][\proofname]
{\par\pushQED{\qed}
	\normalfont\topsep6\p@\@plus6\p@\relax\trivlist
	\item[\hskip\labelsep\bfseries#1\@addpunct{.}]
	\ignorespaces}
{\popQED\endtrivlist\@endpefalse}
\newcommand{\F}{\mathcal{F}}
\newcommand{\N}{\mathbb{N}}
\newcommand{\RR}{\mathbb{R}}
\newcommand{\Ex}{\mathbb{E}}
\newcommand{\s}[1]{\left\lvert #1 \right\rvert}
\newcommand{\size}[1]{\lvert {#1} \rvert}
\newcommand{\sd}{\triangle} 
\newcommand{\norm}[1]{\left\lVert #1 \right\rVert}
\newcommand{\floor}[1]{\left\lfloor{#1}\right\rfloor}
\newcommand{\ceil}[1]{\left\lceil #1 \right\rceil}
\renewcommand{\P}{\mathcal{P}}
\renewcommand{\Pr}{\mathbb{P}}
\newcommand{\Q}{\mathcal{Q}}
\newcommand{\A}{\mathcal{A}}
\newcommand{\B}{\mathcal{B}}
\newcommand{\V}{\mathcal{V}}
\renewcommand{\a}{\alpha}
\renewcommand{\b}{\beta}
\renewcommand{\d}{\delta}
\newcommand{\g}{\gamma}
\newcommand{\z}{\zeta}
\newcommand{\sub}{\subseteq}
\newcommand{\sm}{\setminus}
\newcommand{\e}{\epsilon}
\newcommand{\Z}{\mathcal{Z}}
\newcommand{\X}{\mathcal{X}}
\renewcommand{\l}{\ell}
\renewcommand{\H}{\mathcal{H}}
\newcommand{\G}{\mathcal{G}}
\newcommand{\D}{\ell_1}
\DeclareMathOperator{\twr}{twr}
\DeclareMathOperator{\poly}{poly}
\DeclareMathOperator{\Rem}{Rem}
\DeclareMathOperator{\codeg}{codeg}
\DeclareMathOperator{\Var}{Var}
\date{}
\title{A Sparse Regular Approximation Lemma}
\author{
Guy Moshkovitz\thanks{School of Mathematics, Tel Aviv University, Tel Aviv 69978, Israel.  Email: {\tt guymosko@tau.ac.il}.}
\and Asaf Shapira\thanks{School of Mathematics, Tel Aviv University, Tel Aviv 69978, Israel. Email: {\tt asafico@tau.ac.il}. Supported in part by
ISF Grant 1028/16 and ERC-Starting Grant 633509.}
}
\begin{document}

\maketitle
\begin{abstract}

We introduce a new variant of Szemer\'edi's regularity lemma which we call the {\em sparse regular approximation lemma} (SRAL).
The input to this lemma is a graph $G$ of edge density $p$ and parameters $\epsilon, \delta$, where we think of $\delta$ as a constant.
The goal is to construct an $\epsilon$-regular partition of $G$ while having the freedom to add/remove up to $\delta |E(G)|$ edges.
As we show here, this weaker variant of the regularity lemma already suffices for proving the graph removal lemma and
the hypergraph regularity lemma, which are two of the main applications of the (standard) regularity lemma.
This of course raises the following question: can one obtain quantitative bounds for SRAL that are
significantly better than those associated with the regularity lemma?

Our first result answers the above question affirmatively by
proving an upper bound for SRAL given by a tower of
height $O(\log 1/p)$. This allows us to reprove Fox's upper bound for the graph
removal lemma. Our second result is a matching lower bound for SRAL showing that a tower of height
$\Omega(\log 1/p)$ is unavoidable. We in fact prove a more general multicolored lower bound which is essential for proving lower bounds for the hypergraph regularity lemma.

\end{abstract}

\section{Introduction}\label{sec:intro}

Szemer\'edi's regularity lemma~\cite{Szemeredi78} asserts that every graph can be partitioned into a bounded number of vertex sets $Z_1,\ldots,Z_k$ so that the bipartite graphs between almost all pairs $(Z_i,Z_j)$ behave ``randomly''.
More precisely, for every $\epsilon >0$ there is a smallest integer $M=M(\epsilon)$ such that for every graph, and every vertex equipartition $\P_0$ of order at most $1/\e$, there is an equipartition $\Z$ that refines $\P_0$, is $\e$-regular and has order at most $M$.\footnote{One can use an independent parameter for the upper bound on the size of ${\cal P}_0$ rather than the $1/\epsilon$ we have here.
For the sake of simplicity we decided to drop this parameter as it never
has any real affect on the quantitative bound.}
The precise definitions of the above standard notions are given in Section~\ref{sec:Pert}.
The regularity lemma has become one of the most widely used tools in extremal graph theory, as well as in many other fields. See~\cite{KomlosSi} for a survey. Unfortunately, the proof in \cite{Szemeredi78} gave $M(\epsilon) \leq \twr(\poly(1/\epsilon))$ where $\twr(x)$ is a tower of exponents of height $x$. Hence, the applications of the lemma are all of asymptotic nature and supply very weak quantitative bounds.
A celebrated result of Gowers~\cite{Gowers97} states that $M(\epsilon)$ indeed grows as $\twr(\poly(1/\epsilon))$.

It has long been observed that in some cases one does not need the full strength of Szemer\'edi's lemma. For example, when
one is only interested in global counts such the total number of triangles in a graph, or the size of the largest cut, then
far weaker notions of regularity suffice. Two examples are the so called {\em weak regularity lemma} of Frieze and Kannan~\cite{FriezeKa99}
(see Section~\ref{sec:Pert} for more details) and the {\em cylinder regularity lemma} of Duke, Lefmann and R\"odl~\cite{DukeLeRo95}.
The main advantage of these relaxed regularity lemmas is that the bounds involved are far better than the $\twr(\poly(1/\epsilon))$
bounds that are usually obtained when applying the regularity lemma. For example, the above mentioned variants of the regularity lemma
have bounds that are only exponential in $1/\epsilon$.

Our main objective in this paper is to introduce and study a new relaxed notion of regularity.
As we will show, this relaxed version of the lemma will turn out to be strong enough to imply
two of the most important applications of the regularity lemma, while at the same time be weak enough
to have bounds better than $\twr(\poly(1/\epsilon))$. The idea in this relaxation of the regularity
lemma is to allow the freedom to modify a small {\em percentage} of the graph's edges. We call
this new variant 
the {\em sparse regular approximation lemma} or SRAL for short.
The precise definition is the following.

\begin{definition}\label{def:SRAL} For every $\epsilon,\delta,p>0$ let $S=S(\epsilon,\delta,p)$
be the smallest integer such that if $G$ is a graph of density at least $p$, and ${\cal P}_0$ is an equipartition of $V(G)$ of order at most $1/\e$,
then one can add/remove at most $\delta |E(G)|$ edges and thus
turn $G$ into a graph that has an $\epsilon$-regular equipartition that refines ${\cal P}_0$ and has order at most $S$.
\end{definition}

Let us make some simple observations regarding the above definition.
We first note that trivially $S(\epsilon,\delta,p) \leq M(\epsilon)$ since one can just apply
the usual regularity lemma without taking advantage of $G$'s sparseness and of the freedom to modify $G$.
In particular the function $S$ is well defined.

It is natural to ask if one can take advantage of the sparseness of $G$ even without using the freedom to modify its edges.
As it turns out, this is not the case. It follows from the construction in~\cite{MoshkovitzSh14} that for every $p$ and $\epsilon=p^{12}$,
there is a graph $G$ of edge density $p$ such that every $\epsilon$-regular partition of $G$ has order $\twr(\poly(1/\epsilon))$.
In other words, even when $\epsilon=\poly(p)$, if one wants to beat the $\twr(\poly(1/\epsilon))$ bound that follows from simply
applying the usual regularity lemma, then one has to modify $G$.
Let us also observe that if we allow $\delta$ to depend
on $\epsilon$, say if $\delta=\epsilon^4$, then $S(\epsilon,\epsilon^4,p) \geq M(2\epsilon) \geq \twr(\poly(1/\epsilon))$.
Indeed, this follows from the simple observation that an $\epsilon$-regular bipartite graph remains $2\e$-regular if only $\epsilon^3$-fraction of the  possible edges are added/removed.\footnote{Observe that we can combine the above two observation and get that for every $p$, $\epsilon=p^{12}$ and $\delta=\epsilon^4=p^{48}$, there is a graph of density $p$ such that even after adding/removing $\delta|E(G)|$ edges, every $\epsilon$-regular
partition of the resulting graph has order at least $\twr(\poly(1/\epsilon))$.}
At the other extreme, we trivially have $S(\epsilon,1,p) = 1/\epsilon$.

Hence, the main interest in SRAL is when $\delta < 1$ is 
constant. As we show below, even in this case SRAL has some unexpected applications.
In fact, SRAL will be interesting even when $\epsilon=\poly(p)$, hence our main interest will be in bounding the function $S(\poly(p),\delta_0,p)$
for constant $\delta_0$.

\subsection{Applications of SRAL}\label{subsec:applications}

Let us now explain the main motivation for introducing the sparse regular approximation lemma (SRAL).
One of the first, and most important, applications of the regularity lemma is the graph removal lemma of Ruzsa and Szemer\'edi~\cite{RuzsaSz76}, which states that for every fixed graph $H$ there is a function $\mbox{Rem}_H(\epsilon)$
such that if one must remove from an $n$-vertex graph $G$ at least $\epsilon n^2$ edges in order to make it $H$-free then
$G$ contains at least $n^h/\Rem_H(\epsilon)$ copies of $H$, where $h=|V(H)|$. The standard proof of the removal lemma,
via the regularity lemma, establishes the bound $\Rem_H(\epsilon) \leq M(\poly(\epsilon))=\twr(\poly(1/\epsilon))$.
Our first motivation for introducing SRAL is that one can in fact prove the removal lemma using SRAL.
This is stated explicitly in the following theorem.

\begin{theo}\label{theo:RemRed}
For every $h \ge 3$ there are $\e_0,\d_0,C>0$ such that if $H$ is a graph on $h$ vertices and $\e \le \e_0$ then
\begin{equation}\label{eq:RemRed}
\Rem_H(\e) \le [S(\e^{C},\, \d_0,\, \e)]^C \;.
\end{equation}
\end{theo}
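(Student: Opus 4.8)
The plan is to retrace the standard derivation of the removal lemma from Szemer\'edi's regularity lemma, but to feed it SRAL instead of the regularity lemma. The one genuinely new ingredient is a preliminary reduction to a \emph{sparse} instance, and this is exactly what lets the density parameter of SRAL in \eqref{eq:RemRed} be $\e$ rather than the (possibly large) density of $G$.

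\textbf{Step 1: reduce to a sparse, ``$H$-covered'' subgraph.} Suppose $G$ is $\e$-far from being $H$-free, so the minimum number $\tau_H(G)$ of edges whose deletion destroys all copies of $H$ satisfies $\tau_H(G)\ge \e n^2$. A minimal such deletion set meets every copy of $H$, and each copy has $e(H)$ edges, so $G$ contains $\nu:=\lceil \e n^2/e(H)\rceil$ pairwise edge-disjoint copies $F_1,\dots,F_\nu$ of $H$. Put $G':=F_1\cup\dots\cup F_\nu$. Then $G'$ has density $\Theta(\e)$ (in particular at least $\e$), is $\Omega(\e/e(H))$-far from being $H$-free (it already contains $\nu$ edge-disjoint copies), and crucially \emph{every edge of $G'$ lies in a copy of $H$ contained in $G'$}. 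Since $G'\sub G$, it suffices to find $\ge n^h/[S(\e^{C},\d_0,\e)]^{C}$ copies of $H$ inside $G'$.

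\textbf{Step 2: apply SRAL and clean.} Apply SRAL to $G'$ with density parameter $\e$, regularity parameter $\eta:=\e^{c_0}$ for a suitable constant $c_0>1$, and an equipartition $\P_0$ of order $\approx 1/\eta$. This yields a graph $G^*$ with $|E(G')\sd E(G^*)|\le \d_0|E(G')|\le \d_0\e n^2$, together with an $\eta$-regular equipartition $\Z$ of $G^*$ refining $\P_0$ with $|\Z|\le S:=S(\e^{c_0},\d_0,\e)$. Now clean $G^*$ in the usual way: discard all edges lying inside a part, inside an irregular pair, or inside a pair of density $<h\eta$; this deletes $O(\eta n^2)=o(\e n^2)$ edges. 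Let $G^{**}$ be the result and $R$ its cluster graph. If $R$ admits no homomorphic copy of $H$ that is injective on each edge, then $G^{**}$ is $H$-free, so $G'$ can be made $H$-free by deleting the $\le\d_0\e n^2+O(\eta n^2)$ edges of $E(G')\sm E(G^{**})$; taking $\d_0<1/(2e(H))$ and $\e$ small, this is fewer than $\e n^2/(2e(H))$ edges, contradicting Step~1. Hence $R$ contains $H$ homomorphically, which pins down $\le e(H)$ pairs of $\Z$ that are $\eta$-regular of density $\ge h\eta$ and arranged in the shape of $H$; by the embedding lemma, $G^*$ contains $\ge c' n^h$ copies of $H$, for an explicit $c'=c'(\eta,S,h)>0$.

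\textbf{Step 3 (the crux): transfer the copies back to $G'$.} The obstacle is that the copies just produced live in $G^*$, and SRAL may have inserted edges they use; this is where the ``$H$-covered'' structure of $G'$ is needed. Because the $F_i$ are pairwise edge-disjoint, each of the $\le\d_0\e n^2$ edges modified by SRAL kills at most one $F_i$, so at least $(1-\d_0 e(H))\nu$ of them survive intact inside $G^*\cap G'$; all but $O(\eta n^2)$ of these survivors have all their edges in surviving pairs of $\Z$, so $\Theta(\e n^2)$ copies of $H$ lying inside $G'$ are ``captured'' by $\Z$ in dense regular pairs of $G^*$. Pigeonholing these over the $\le S^h$ possible placements of their vertex-parts yields one placement used by $\Theta(\e n^2/S^h)$ of them; this placement consists of dense regular pairs of $G^*$ and — once one additionally cleans away the few pairs into which SRAL dumped a disproportionate share of its modifications — of pairs that remain dense and regular in $G'$, so the embedding lemma applied \emph{inside} $G'$ produces the desired $\Omega(n^h)/[S(\e^{c_0},\d_0,\e)]^{O(1)}$ copies; taking $C$ large enough swallows the constants and the polynomial-in-$\e$ losses. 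I expect the main difficulty to be precisely the simultaneous choice of the regularity parameter $\eta$, the density threshold, and the ``disproportionately modified'' threshold, so that every cleaning step stays within the $\e$-far budget of $G'$ while the regular structure that survives is still dense enough in $G'$ itself to be counted through.
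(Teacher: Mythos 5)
Your Steps 1 and 2 follow the paper's proof quite closely: the paper also passes to the union $G_1$ of $\Theta(\e n^2/m)$ edge-disjoint copies of $H$ (so the SRAL density parameter is $\Theta(\e)$), applies SRAL with regularity parameter $\mathrm{poly}(\e)$, cleans internal/irregular/low-density pairs \emph{and} pairs on which the modification is a disproportionate fraction of the pair's edges (your parenthetical remark in Step 3 is exactly the paper's cleaning item~(iv)), and uses edge-disjointness to conclude that the cleaned modified graph still contains a copy of $H$, which pins down a placement consisting of dense regular pairs. Incidentally, your pigeonholing over placements is unnecessary for this (one surviving copy suffices) and could not by itself give the theorem, since edge-disjoint copies number only $O(n^2)$ while the conclusion requires $\Omega(n^h/S^C)$ copies.

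The genuine gap is the final claim of Step 3: that after discarding the disproportionately-modified pairs, the remaining pairs ``remain dense and regular in $G'$'' (your $G'$, the paper's $G_1$), so that the embedding lemma can be run \emph{inside} $G_1$. This does not follow, and is false in general in this sparse regime. After your cleaning, a surviving pair $(V,V')$ is only guaranteed to be $\sqrt{\d_0}$-close to the corresponding pair of the modified graph, i.e.\ the number of modified edges is at most a constant fraction of the pair's $\approx q|V||V'|$ edges, where the density $q$ is $\mathrm{poly}(\e)$. These modifications can be concentrated on vertex subsets of relative size $\a,\b$, changing sub-pair densities in $G_1$ by up to $\sqrt{\d_0}\,q/\a\b$; since $\d_0$ is an absolute constant while $q\to 0$, this exceeds $q$ itself already for constant $\a,\b$, and is astronomically larger than any regularity parameter $\le\mathrm{poly}(q)$ that a counting/embedding lemma would require at the scales it actually uses (neighborhoods of relative size $\approx q$). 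Nor can one fix this by tightening the ``disproportionate'' threshold: forcing at most $\e'q|V||V'|$ modifications per surviving pair would make the discarding step delete far more than the $\e n^2/2m$ budget allowed by Step 1. The paper's way around this is precisely its approximate counting lemma (Lemma~\ref{lemma:appCounting}): one never establishes regularity of $G_1$'s pairs; instead one counts copies of $H$ in the regularized modified graph, and subtracts, for each of the at most $\sqrt{\d}\,q|V||V'|$ modified edges per pair, the number of copies of $H$ through that edge --- again counted in the \emph{regular} graph, via the counting lemma applied with one edge fixed --- so the loss is only a $\d m$-type fraction of the main term $q^m n^h$ provided $\d_0=\Theta(1/m^2)$. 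That subtraction argument, not regularity of $G_1$, is the missing ingredient; a further (minor) point your sketch leaves open is how the $\mathrm{poly}(1/\e)$ losses are absorbed into $[S(\e^C,\d_0,\e)]^C$, which the paper does by invoking the Ruzsa--Szemer\'edi lower bound $\Rem_H(\e)\ge(1/\e)^{c\log(1/\e)}$.
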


The proof of the Theorem~\ref{theo:RemRed} is much more delicate than the usual proof of the removal lemma via the standard regularity lemma, mainly due to having to work with a modified version of the input graph. In particular, we will need to prove a counting lemma which is suitable for SRAL, see Lemma~\ref{lemma:appCounting}.

Our second motivation for studying SRAL is the hypergraph regularity lemma \cite{Gowers07,NagleRoSc06,RodlSk04,Tao06-h}.
For simplicity, we focus on the regularity lemma for $3$-uniform hypergraph ($3$-graphs for short), such as the one
obtained by Frankl and R\"odl~\cite{FrankRo02},
refraining from giving the exact definition of $3$-graph regularity.
Since all proofs of the regularity lemma for $3$-graphs proceed by
repeatedly applying the graph regularity lemma, they all produce partitions whose
order is given by a Wowzer-type bound, that is, an iterated-tower bound. It is a major open problem to decide
if one can obtain tower-type bounds for the $3$-graph regularity lemma, and more generally for the $k$-graph
regularity lemma. One striking application for such a bound would be primitive recursive bounds for the multidimensional Szemer\'edi theorem \cite{Gowers07}, a result which currently has only Ackermann-type upper bounds.
As in the case of the removal lemma, we can show that when proving the $3$-graph regularity lemma, one can replace the application of the graph regularity lemma with an application of SRAL.
In particular, we have the following,
where $\twr_y(x)$ is a tower of $x$ exponents with $y$ at the top.\footnote{So $\twr_y(2)=2^{2^{y}}$ and $\twr_{\twr(x)}(x)=\twr(2x)$.}

\begin{prop}\label{theo:HyperRed}
Suppose that for every $C>0$ there is $c>0$ such that
\begin{equation}\label{eq:HyperRed}
S(p^C,\delta,p) \leq \twr_{1/p}((1/\delta)^c)\;.
\end{equation}
Then one can prove a tower-type upper bound for the $3$-graph regularity lemma.
\end{prop}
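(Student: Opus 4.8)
The plan is to open up one of the standard proofs of the $3$-graph regularity lemma --- say the one of Frankl and R\"odl~\cite{FrankRo02} --- and replace every invocation of the graph regularity lemma by an invocation of SRAL, keeping the modification parameter $\delta$ a fixed \emph{constant} throughout. Recall the shape of that proof: one maintains a vertex partition of $V(H)$ into $t$ classes together with, for each pair of classes, a partition of the complete bipartite graph between them into $\ell$ bipartite graphs (the ``pair partition''); triples of such bipartite graphs across three classes form triads, and one repeatedly tests whether $H$ is $\e_3$-regular with respect to almost all triads. If it is, we are done; if not, a $3$-graph index increment argument --- whose gain per step is $\e_3^{O(1)}$, and in particular depends only on $\e_3$ and not on $t$ or $\ell$ --- produces a suitable refinement of the pair partition, so the number of rounds $T$ is bounded by $\poly(1/\e_3)$, independently of $H$. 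The one step in each round that inflates the partition is the \emph{regularization} of the refined pair partition, carried out by applying the graph regularity lemma to each of the $O(t^2\ell)$ bipartite graphs with a regularity parameter $\e_2$ which, in the appropriate formulation of the lemma, may be taken to be a fixed power of $\e_3$. Because the graph regularity lemma turns an $\ell$-part partition into a $\twr(\poly(\ell))$-part one, composing this step $T$ times --- with $T$ growing as $\e_3\to 0$ --- is exactly what produces the Wowzer-type bound.

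The quantitative gain comes entirely from replacing that regularization step by SRAL. The bipartite graphs of the pair partition have densities bounded below by roughly $1/\ell$, so we invoke SRAL with density parameter $p\sim 1/\ell$, regularity parameter $\e_2=\poly(\e_3)\ge p^{C}$ (for a suitable $C$, hence a corresponding $c$, in the hypothesis; the condition $\e_2\ge p^C$ holds once $\ell$ is at least a fixed polynomial in $1/\e_3$, which we may assume throughout, the initial rounds where $\ell$ is still small costing only $\twr(\poly(1/\e_3))$ via the trivial bound $S\le M$), and a small \emph{constant} $\delta=\delta(\e_3)$; the hypothesis then provides an $\e_2$-regular partition of a slightly modified bipartite graph into at most $\twr_{1/p}((1/\delta)^{c})=\twr_{\ell}((1/\delta)^{c})$ parts. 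Running this on each of the $O(t^2\ell)$ bipartite graphs, taking the common refinement of the resulting vertex partitions, and adjusting the pair partition accordingly, one round sends a pair partition of order $\ell$ to one of order at most $\bigl(\twr_{\ell}((1/\delta)^{c})\bigr)^{\poly(t,\ell)}$. Set $k:=(1/\delta)^{c}$, a \emph{constant} since $\delta$ is. Using the elementary estimate $\twr_{\ell}(k)\le\twr(m+k)$ whenever $\ell\le\twr(m)$ --- stacking a height-$m$ tower on top of a height-$k$ tower merely \emph{adds} the heights, exactly as in the identity $\twr_{\twr(x)}(x)=\twr(2x)$ --- this says: if $\ell\le\twr(m)$ then the new order is at most $\twr(m+O(k))$. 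Hence after all $T=\poly(1/\e_3)$ rounds the pair partition has order at most $\twr(m_0+O(kT))=\twr(\poly(1/\e_3))$, and the vertex partition is bounded in terms of it, hence also by $\twr(\poly(1/\e_3))$. This is a tower-type bound. The whole point lies in the \emph{form} of the hypothesized SRAL bound: placing $1/p$ at the top of a constant-height tower makes round-to-round heights \emph{add}, whereas an $\ell$-fold self-composition of $\twr(\poly(\cdot))$ makes them \emph{compose} into a Wowzer.

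The real obstacle is the soundness of the substitution: one must verify that the $3$-graph regularity machinery survives the edge modifications SRAL performs on the underlying bipartite graphs. After a regularization step the triads are built on \emph{modified} bipartite graphs, so both the $\e_3$-regularity test and, more delicately, the $3$-graph index increment must tolerate changing a $\delta$-fraction of the pairs beneath each triad. Since the $3$-graph density over a triad can move by as much as the fraction of pairs modified, $\delta$ must be chosen small relative to $\e_3$ (but still constant), and one must argue --- in the spirit of the counting/perturbation lemma behind Theorem~\ref{theo:RemRed}, cf.\ Lemma~\ref{lemma:appCounting} --- that every resulting error term is absorbed into the slack already built into the regularity hierarchy, and in particular that the per-round index gain $\e_3^{O(1)}$ dominates the total perturbation of the index caused by the modifications. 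One also needs the modifications across successive rounds not to accumulate, but since each round changes only a $\delta$-fraction \emph{of the current} bipartite graphs while the pair partition only gets finer, a single global modification budget of $O(\delta)$ per pair of vertex classes suffices. Carrying these robustness statements out carefully is where the work lies; the quantitative bookkeeping above is then routine.
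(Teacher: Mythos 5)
Your proposal follows exactly the route the paper indicates for this proposition: rerun the Frankl--R\"odl proof of $3$-graph regularity, replacing each application of the graph regularity lemma by SRAL with a constant $\delta$ and regularity parameter at least $p^{C}$ for the current pair densities $p\sim 1/\ell$, and observe that the hypothesized bound $\twr_{1/p}((1/\delta)^{c})$ makes tower heights \emph{add} rather than compose over the $\poly(1/\epsilon_3)$ index-increment rounds, which is precisely what turns the Wowzer-type bound into a tower-type one. The paper deliberately omits the detailed verification (stating that writing it out would amount to reproving the $3$-graph regularity lemma in its entirety), and the step you defer at the end --- that the regularization-by-SRAL, i.e.\ the $\delta$-fraction edge modifications, does not disturb the progress of the index-increment process --- is exactly the step the paper likewise only asserts, so your sketch matches the paper's own treatment in both approach and level of detail.
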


The proof of Proposition~\ref{theo:HyperRed} proceeds by redoing the proof of the regularity lemma for $3$-graphs~\cite{FrankRo02},
while observing that in the critical step when one applies the regularity lemma, it is in fact enough to use SRAL with no
affect on the progress of the process of regularizing the hypergraph.

Summarizing,
the above two theorems in particular imply that for a fixed $\d$, proving a bound on $S(\poly(\e), \d,\, \e)$ which is significantly better than $\twr(\poly(1/\e))$ would have the following immediate consequences. By Theorem~\ref{theo:RemRed}, this would give an improvement over the standard bound $\Rem_H(\epsilon) \le \twr(\poly(1/\e))$ for the graph removal lemma. By Proposition~\ref{theo:HyperRed}, if one can further prove an upper bound for $S(\poly(\e), \d,\, \e)$ that is given by a bounded number of exponents, then one would significantly improve the bound on $3$-graph regularity from Wowzer-type to tower-type.

\subsection{The regular approximation lemma}

Before describing our solution of the above problem, we first describe a related variant
of the regularity lemma. As the name SRAL suggests, it is a variant of the so-called {\em regular approximation lemma} (RAL for short),
a special case\footnote{The full-fledged RAL allows one to replace $\epsilon$ with an arbitrary function $f$, so that the equipartition $\P$ is such that all pairs are $f(|\P|)$-regular. See~\cite{ConlonFo12} for a detailed discussion. As we will mention later (see Section~\ref{sec:Proof}), the proof for SRAL that we give applies to this more general setting almost without any affect on the bounds. We opted to describe the simpler/weaker versions of RAL and SRAL since they suffice for the applications mentioned in Subsection~\ref{subsec:applications} and, most importantly, since the lower bounds we will prove hold even in these simpler settings.} of which can be defined as follows.

\begin{definition}
\label{theo:RAL}
For every $\epsilon,\delta>0$ let $T=T(\epsilon,\delta)$ be the smallest integer such that
if $G$ is an $n$-vertex graph and ${\cal P}_0$ is an equipartition of $V(G)$ of order at most $1/\epsilon$,
then one can add/remove at most $\delta n^2$ edges and thus
turn $G$ into a graph that has an $\epsilon$-regular equipartition which refines ${\cal P}_0$ and has order at most $T$.
\end{definition}

The RAL was introduced as part of the study of graph limits and of the hypergraph regularity lemma by Lov\'asz and Szegedy~\cite{LovaszSz07} and R\"odl and Schacht~\cite{RodlSc07}, respectively.
Note that RAL differs from SRAL in that the number of edge modification is a $\delta$-fraction of $n^2$ rather than
$|E(G)|$.
Nonetheless, we still have the trivial relation
\begin{equation}\label{eq:RAL}
S(\epsilon,\delta,p) \leq T(\epsilon,\delta p)\;.
\end{equation}

The upper bounds obtained in \cite{LovaszSz07,RodlSc07}, when specialized to Definition~\ref{theo:RAL}, are no better than the trivial $T(\epsilon,\delta) \leq M(\epsilon)=\twr(\poly(1/\epsilon))$ bound that follows from the regularity lemma.
A considerably better bound was given by Conlon and Fox~\cite{ConlonFo12} who showed that
$T(\epsilon,\d) \leq \twr_{1/\epsilon}(\poly(1/\delta))$.
Note that for a fixed $\delta$, this is a fixed number of exponents, which is significantly better than the $\twr(\poly(1/\epsilon))$
bound given by the regularity lemma.
Although this bound seems like the one we were aiming for in Proposition~\ref{theo:HyperRed}, observe that it only implies, via~(\ref{eq:RAL}), that when $\delta$ is a fixed
constant and $\epsilon=\poly(p)$ we have $S(\epsilon,\delta,p) \leq \twr_{1/\epsilon}(\poly(1/\delta p)) = \twr(\poly(1/\epsilon))$, which
again does not improve over the regularity lemma.

\subsection{An upper bound for SRAL}\label{subsec:intro-UB}

Our first bound shows that one can improve upon the $\twr(\poly(1/\epsilon))$
bound of the regularity lemma, even when the number of modifications allowed is relative to the graph's density.
In particular, we improve the bound given by the regularity lemma
when $\e=\poly(p)$, which is the setting of Theorem~\ref{theo:RemRed} and Proposition~\ref{theo:HyperRed}.

\begin{theo}
\label{theo:SRAL}
There is an absolute constant $c$ such that
$S(\epsilon,\delta,p) \leq \twr_{1/\epsilon}(c\log(1/p)/\delta^2)$.
In particular, for every fixed $C,\delta_0>0$ we have $$S(p^C,\delta_0,p) \leq \twr(O(\log(1/p)))\;.$$
\end{theo}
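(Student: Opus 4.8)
The plan is to run the usual energy-increment argument of Szemer\'edi, but to exploit the freedom to modify a $\delta$-fraction of $E(G)$ in order to ``flatten out'' the graph after each refinement step, so that the energy of the working partition never climbs above roughly $\log(1/p)$ rather than above $1$. Concretely, fix $G$ with $e(G) \ge p\binom{n}{2}$ and start from $\P_0$. Given a current equipartition $\P$, if $\P$ is already $\epsilon$-regular we are done; otherwise, by the standard argument there is a refinement $\P'$ of $\P$, with $|\P'| \le |\P| \cdot 2^{\poly(1/\epsilon)}$, whose mean-square density (energy) exceeds that of $\P$ by at least $\poly(\epsilon)$. The key new step is: after computing $\P'$, replace $G$ by the graph $G'$ in which every pair $(V_i,V_j)$ of $\P'$ that has density below some threshold $\tau = \tau(\delta,p)$ is \emph{emptied} (all its edges deleted). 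Since $G$ has density $\ge p$ and a bounded number of such low-density pairs, choosing $\tau$ of order $\delta p$ (divided by the current number of parts — but this is absorbed because we will argue the number of deletions over the \emph{whole} run is controlled, not per step) ensures the total number of deleted edges over the entire process is at most $\delta e(G)$. After this truncation, every surviving pair has density in $[\tau,1]$, so the energy of any partition of $G'$ lies in the interval $[0, 1]$ but — crucially — the \emph{local} contribution we gain at each step, measured as a weighted $\ell_2$ increment \emph{relative to the logarithmic scale} $\sum d_{ij}^2 \mapsto \sum d_{ij}^2$, can be rebalanced: because all densities are bounded below by $\tau \approx \delta p$, and the graph's global density is $\ge p$, a more careful accounting shows the number of refinement steps before $\epsilon$-regularity is reached is $O(\log(1/\tau)) = O(\log(1/p)/\poly(\delta))$ rather than $\poly(1/\epsilon)$.

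More precisely, here is the mechanism I expect to make this work. Track the potential $\Phi(\P) = \sum_{i<j} \frac{|V_i||V_j|}{\binom{n}{2}} \cdot \frac{d_{ij}^2}{p}$ — i.e., the usual energy but \emph{rescaled by $1/p$}. After truncation every nonzero density is $\ge \tau$, so... no: the right move is to bound the \emph{number of steps}. Each non-regular step increases the \emph{unscaled} energy by $\ge \poly(\epsilon)$, and energy is at most $1$, giving $\poly(1/\epsilon)$ steps — that's the old bound. To beat it we instead argue that, \emph{after truncation}, on the surviving (dense) part the energy increment at each step is $\ge \poly(\epsilon)$ but measured against a quantity bounded by... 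Actually the cleanest route, and the one I would pursue, is a \emph{density-sensitive} energy increment: if $G$ has density $\ge p$, an $\epsilon$-irregular pair yields an energy increment of order $\epsilon^2 \cdot (\text{something})$, and one shows the relevant potential is bounded by $O(p \log(1/p))$ rather than $O(1)$ once low-density pairs are removed — because $\sum d_{ij} w_{ij} \approx p$ and, subject to that $\ell_1$ constraint with all $d_{ij} \in [\tau,1]$, the $\ell_2$ mass $\sum d_{ij}^2 w_{ij}$ is maximized by pushing densities to the extremes, giving $O(p)$; iterating the refinement at most $k$ times multiplies the part-count by $2^{\poly(1/\epsilon)}$ per step, so $k = O(\log(1/p)/\delta^2)$ steps yield $S \le \twr_{1/\epsilon}(O(\log(1/p)/\delta^2))$. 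The exponent $1/\epsilon$ at the base of the tower comes from $|\P_0| \le 1/\epsilon$, and each of the $k$ steps contributes one more level of height $\poly(1/\epsilon)$; composing $k$ such maps gives a tower of height $k$ with $1/\epsilon$-type growth at each level, i.e. $\twr_{1/\epsilon}(O(k))$.

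The main obstacle, and where I expect the real work to be, is the \emph{amortized} bookkeeping of the deletions: the threshold $\tau$ below which we empty a pair must be small enough that the \emph{cumulative} number of deleted edges across all $\approx \log(1/p)/\delta^2$ truncation steps stays under $\delta e(G)$, yet large enough that removing all sub-$\tau$ pairs genuinely caps the energy potential at $O(p\log(1/p))$. Since the number of pairs in $\P'$ at step $t$ is itself a tower-type function of $1/\epsilon$, a naive per-step budget of $\delta e(G)/k$ forces $\tau$ to be a tower-type \emph{small} number, which is fine for the deletion count but one must then re-verify the energy bound survives with such a $\tau$ — this should be OK because $\log(1/\tau)$ enters only through the $O(\log(1/p)/\delta^2)$ step count, and $\log(1/\tau)$ is still only polynomial in the current partition size's logarithm... this circularity is the delicate point and must be resolved by choosing the truncation threshold \emph{adaptively} (proportional to the current part sizes) and summing a geometric-type series, rather than with a uniform $\tau$. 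Once the deletion accounting closes, the ``in particular'' statement is immediate: with $\epsilon = p^C$ and $\delta = \delta_0$ fixed, $\twr_{1/\epsilon}(c\log(1/p)/\delta_0^2) = \twr_{1/p^C}(O(\log(1/p)))$, and absorbing the $1/p^C$ at the top of a height-$O(\log(1/p))$ tower into one extra level gives $\twr(O(\log(1/p)))$.
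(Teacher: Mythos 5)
Your proposal has a genuine gap at its quantitative core: the claim that, after truncating low-density pairs, the refinement process terminates in $O(\log(1/\tau))=O(\log(1/p)/\poly(\delta))$ steps is asserted but never supported by a mechanism, and with the potential you chose it is false. With the mean-square-density energy, the per-step gain from an $\epsilon$-irregular partition is \emph{additive}, of order $\poly(\epsilon)$, while the total energy of a density-$p$ graph is $\Theta(p)$ whether or not you empty sparse pairs (you already note $\sum w_{ij}d_{ij}^2\le p$; truncation does not lower this further, nor does it raise the per-step gain). So the step count you can actually extract is $p/\poly(\epsilon)=\poly(1/\epsilon)$, which reproduces the tower-$\poly(1/\epsilon)$ bound rather than beating it. To get $O(\log(1/p))$ steps you would need either a multiplicative energy increment or a potential whose range is only $O(p\log(1/p))$ with per-step gain $\Omega(\delta^2 p)$; your truncation provides neither. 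A second structural problem is the stopping rule ``if $\P$ is already $\epsilon$-regular we are done'': waiting for genuine $\epsilon$-regularity is exactly what costs $\twr(\poly(1/\epsilon))$, and your only use of the $\delta|E(G)|$ budget (emptying sparse pairs) does nothing to regularize the dense irregular pairs, so it cannot let you stop early.

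For comparison, the paper's proof spends the modification budget at the \emph{end}, not on truncation. It iterates a (strengthened, ``average-deviation'') Frieze--Kannan weak regularity lemma and tracks the entropy potential $\H(\P)=\sum\frac{|V||V'|}{n^2}d(V,V')\ln d(V,V')$, whose total range on a density-$p$ graph is only $p\ln(1/p)$; the iteration stops as soon as one step gains less than $\delta^2 p/2$, which happens within $O(\log(1/p)/\delta^2)$ steps. Pinsker's inequality then converts this small entropy increment into a bound of $\delta|E(G)|$ on the $\ell_1$-distance (relative to $p$) between the last two partitions, and a perturbation lemma (randomly equalizing densities inside each pair, with Chernoff bounds) shows that exactly that many edge modifications turn the coarser partition into a genuinely regular one. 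These three ingredients --- a potential of range $p\log(1/p)$ with an increment-based stopping rule, Pinsker to translate small potential gain into few modifications, and the perturbation step replacing ``wait until $\epsilon$-regular'' --- are precisely what is missing from your outline, and without analogues of them the circularity you flag in the deletion accounting is not the real obstacle; the step-count bound is.
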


Since we trivially have
$T(\epsilon,\delta) \leq S(\epsilon,\delta,1/2)$,\footnote{Indeed, we can either apply SRAL to $G$ or to its complement.} Theorem~\ref{theo:SRAL} immediately gives as a special case the
bound $T(\epsilon,\delta) \leq \twr_{1/\epsilon}(\poly(1/\delta))$ for RAL, which was first proved in \cite{ConlonFo12}.
We note that our proof of Theorem~\ref{theo:SRAL} gives a much more general result -- we can in fact guarantee that the partition is such that all pairs are $\e$-regular and that $\e$ can be taken to be a function of the order of the partition.\footnote{As we noted earlier, such stronger properties where also available for previous versions of RAL.} For the precise statement see Theorem~\ref{theo:SRAL2} in Section~\ref{sec:Proof}.

Our original proof of Theorem~\ref{theo:SRAL} applied a method similar to the one used by Scott~\cite{Scott11} in his proof of a regularity lemma for sparse graphs. The idea is to build a sequence of partitions ${\cal P}_0,{\cal P}_1,\ldots$ so that $|{\cal P}_{i+1}| \leq 2^{\poly(|{\cal P}_{i}|)}$, where
in partition ${\cal P}_i$
all $\e$-irregular pairs have density at least $2^i p$ (thus, in particular, at most a $2^{-i}$-fraction of the pairs are irregular).
Since this process terminates after $\log(1/p)$ iterations we get a bound similar to the one stated in Theorem~\ref{theo:SRAL}. The main benefit of this proof is that it hints at how one should construct a lower bound for $S(\poly(p),\delta,p)$. See the discussion after Theorem~\ref{theo:LB2}.

The actual proof of Theorem~\ref{theo:SRAL} we give here uses a different approach which is shorter to prove. It is motivated by the one taken by Conlon and Fox~\cite{ConlonFo12}, using an iterated version of the weak regularity lemma of Frieze and Kannan~\cite{FriezeKa99}. Our proof however differs in two important aspects. First, we use (and prove) a new variant of the weak regularity lemma which
we need for our purposes. Second, we use the entropy potential function (first used by Fox~\cite{Fox11}) together with Pinsker's inequality from information theory, in order to control the $\ell_1$-distance, {\em relative to the graph's density}, between partitions with similar entropy potentials.
We believe this approach might be useful for studying other variants of the graph and hypergraph removal lemma.

An immediate application of Theorems~\ref{theo:RemRed} and~\ref{theo:SRAL} gives the following:

\begin{coro}\label{theo:removal}
For every $h$-vertex graph $H$ we have
$$\Rem_H(\epsilon) \le \twr(O(\log(1/\e))) \;.$$
\end{coro}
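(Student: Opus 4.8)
The plan is to simply chain Theorem~\ref{theo:RemRed} with Theorem~\ref{theo:SRAL} and then carry out a little elementary tower arithmetic. For $h\le 2$ the statement is trivial (the removal lemma is easy for a single vertex or edge), so assume $h\ge 3$ and let $\e_0,\d_0,C>0$ be the constants that Theorem~\ref{theo:RemRed} provides for this $h$. It also suffices to prove the bound for $\e\le\e_0$, since for larger $\e$ (a regime of no real interest) the claimed inequality holds trivially once the implied constant is chosen large enough.

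First I would apply Theorem~\ref{theo:RemRed} to get
$$\Rem_H(\e)\;\le\;\big[\,S(\e^{C},\,\d_0,\,\e)\,\big]^{C}.$$
Then I would bound the inner quantity by invoking Theorem~\ref{theo:SRAL} with regularity parameter $\e^{C}$, density parameter $p=\e$, and modification parameter $\d_0$, which yields
$$S(\e^{C},\,\d_0,\,\e)\;\le\;\twr_{1/\e^{C}}\!\big(c\log(1/\e)/\d_0^{2}\big).$$
Since $\d_0$ is an absolute constant, the argument $c\log(1/\e)/\d_0^{2}$ equals $O(\log(1/\e))$, so this is a tower of $O(\log(1/\e))$ twos topped by $1/\e^{C}$.

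The only thing left is to check that the subscript $1/\e^{C}$ at the top, and the outer $C$-th power, together add only a lower-order number of levels. For the subscript I would iterate the identity $\twr_{2^{z}}(m)=\twr_{z}(m+1)$: writing $1/\e^{C}=2^{C\log(1/\e)}$ and repeatedly peeling off logarithms, one reaches a subscript of size $O(1)$ after $O(\log^{*}(1/\e))$ steps, giving $\twr_{1/\e^{C}}(m)\le\twr\!\big(m+O(\log^{*}(1/\e))\big)$. Hence
$$S(\e^{C},\,\d_0,\,\e)\;\le\;\twr\!\big(O(\log(1/\e))+O(\log^{*}(1/\e))\big)\;=\;\twr\big(O(\log(1/\e))\big),$$
using $\log^{*}(1/\e)=o(\log(1/\e))$. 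Finally, raising a tower to the fixed power $C$ costs at most one extra level, so $\Rem_H(\e)\le\twr(O(\log(1/\e)))$, as claimed.

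I do not expect a genuine obstacle here: the corollary is immediate from the two main theorems, and the only point requiring any care is the bookkeeping in the last step — making sure that the constants $\d_0$ and $C$ (which depend on $h$ but not on $\e$) and the subscript $1/\e^{C}$ do not inflate the height of the final tower beyond $O(\log(1/\e))$.
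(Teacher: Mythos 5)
Your proposal is correct and is exactly the paper's intended argument: Corollary~\ref{theo:removal} is stated as an immediate combination of Theorem~\ref{theo:RemRed} with the ``in particular'' clause of Theorem~\ref{theo:SRAL} (which already absorbs the subscript $1/\e^{C}$ into the tower height), and your extra bookkeeping with $\twr_{2^z}(m)=\twr_z(m+1)$, the $\log^{*}$ term, and the final $C$-th power just makes that absorption explicit.
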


As is of course well known, the above bound for the removal lemma was first obtained by Fox~\cite{Fox11}, who was the first to improve upon the $\twr(\poly(1/\epsilon))$ bound that follows from applying the regularity lemma.
Fox's breakthrough result relied on an ad-hoc argument, and we think it is important to see that the same bound can be derived
in the framework of the regularity method.

\subsection{A tight lower bound for SRAL}\label{subsec:intro-LB}

Recall that our second motivation for SRAL was the possibility of using it to improve the bounds for hypergraph regularity, stated in Proposition \ref{theo:HyperRed}. Theorem \ref{theo:SRAL} does allow one to ``improve'' the bounds for hypergraph regularity by replacing an iterated version
of the function $\twr(\poly(1/\epsilon))$ with an iterated version of the function $\twr(\log(1/\epsilon))$. However, the latter is still a Wowzer-type function.
This, and the possibility of obtaining even better bounds for the removal lemma (via
Theorem~\ref{theo:RemRed}), naturally raise the question if one can obtain even better bounds for SRAL, say, a $\twr_{1/\epsilon}(\poly(1/\delta))$ bound as the one obtained by Conlon and Fox~\cite{ConlonFo12} for RAL. As our second result shows, such an improvement is impossible, even when $\epsilon=p^5$ and $\delta$ is a fixed constant.

\begin{theo}\label{theo:LB2}
There are fixed constants $\delta_0,c > 0$ such that
\begin{equation}\label{eq:lower}
S(p^5,\delta_0,p) \geq \twr(c\log(1/p)) \;.
\end{equation}
Furthermore, one can decompose the complete bipartite graph into $1/p$ graphs of density $p$ so that each of them witnesses~(\ref{eq:lower}).
\end{theo}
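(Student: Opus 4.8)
The plan is to construct, for each $p$ of the form $1/2^t$ (say), a graph $G$ of density $p$ on $n$ vertices such that no matter which $\delta_0 n p \cdot n = \delta_0 |E(G)|$ edges one adds or removes, the resulting graph has no $p^5$-regular equipartition of order below $\twr(c\log(1/p))$. The natural strategy is to start from a known lower-bound construction for the \emph{ordinary} regularity lemma — the Gowers-type construction, or rather the cleaner iterated construction of Conlon and Fox~\cite{ConlonFo12} / Fox or the one from~\cite{MoshkovitzSh14} — which produces a graph $\Gamma$ of density $\Theta(1)$ on $N$ vertices forcing regular partitions of tower-height $\Omega(\log N)$ (or $\twr$ of a constant times the number of ``levels'' in the construction). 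I would then \emph{dilute} this construction to density $p$ by a blow-up/tensoring trick: replace $\Gamma$ by $\Gamma' = \Gamma \otimes K_{1/p}$-type construction, or more precisely take a random-like sparse graph on each pair of parts that ``encodes'' $\Gamma$ at density $p$, so that any $p^5$-regular partition of $G$ (after few modifications) must still refine the structure that made $\Gamma$ irregular. The point of using $\e = p^5$ (rather than a constant) is exactly Scott's / the paper's earlier observation: a $p^5$-regular pair at density $p$ is genuinely a strong constraint relative to the density, so the sparse construction cannot be ``regularized away'' cheaply.

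The key steps, in order, would be: (1) Fix the number of levels to be $L = \Theta(\log(1/p))$ — this is what caps the final tower height at $\twr(\Theta(\log 1/p))$, and crucially the construction should \emph{not} try to go deeper than $\log(1/p)$ levels, since each level will ``cost'' a factor in the density. (2) At level $i$, build a graph of density $\approx p$ on the current parts that is $\e$-irregular in a robust way, i.e.\ $\e$-irregular even after adding/removing a $\delta_0$-fraction of its edges, and even after any refinement of bounded size; robustness under edge modification is the genuinely new ingredient here compared to the classical lower bounds. (3) Nest these $L$ levels so that a $p^5$-regular partition is forced to ``resolve'' all $L$ levels, each level multiplying the required order, giving $\twr(\Omega(L)) = \twr(\Omega(\log 1/p))$. (4) For the ``Furthermore'' clause, observe that the natural construction already lives inside a complete bipartite graph $K_{n,n}$: partition $E(K_{n,n})$ into $1/p$ ``color classes'' each of density $p$ (e.g.\ via a $1/p$-edge-coloring that is itself pseudorandom / an affine-plane-type coloring), and argue that one can arrange the level structure so that \emph{each} color class, viewed on its own, is a graph of density $p$ carrying a full copy of the hard instance — this is why the paper emphasizes a \emph{multicolored} lower bound, and it is exactly what feeds into the hypergraph-regularity lower bound.

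The main obstacle I expect is step (2): making the irregularity \emph{robust to a constant fraction of edge modifications measured against $|E(G)|$ rather than $n^2$}. In the classical Gowers construction, irregularity comes from a carefully chosen bipartite graph whose density deviates on certain large subsets; but here an adversary may delete or add up to $\delta_0 p n^2$ edges, which is a constant fraction of \emph{all} the edges, so the construction must force irregularity via a deviation that survives such a massive (relative) perturbation. I anticipate handling this by ensuring the ``signal'' that certifies irregularity has magnitude $\gg \delta_0$ in the appropriate normalized sense at every level — concretely, by using density deviations of order $\Theta(p)$ on subsets that are a constant fraction of the parts, and then showing via a union-bound / averaging argument that no bounded-size partition and no $\delta_0 |E(G)|$-modification can simultaneously kill the signal on all levels. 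A secondary technical point is bookkeeping the interaction between the $1/p$ color classes in the ``Furthermore'' part: one must check that the modification budget and the irregularity signals do not interfere across colors, which should follow from choosing the coloring to be sufficiently quasirandom (so each color class looks, locally on every pair of parts, like an independent sparsification). I would also want to double-check the exact exponent $5$ in $p^5$ against the density losses incurred by the blow-up and by Pinsker-type slack; if $5$ turns out tight only up to constants one can absorb it, but the statement as given suggests the construction is engineered so that $p^5$ is comfortably enough room.
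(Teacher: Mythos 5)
Your high-level architecture is in fact the one the paper uses: an iterated construction with $s=\Theta(\log(1/p))$ levels, each level simultaneously blowing up the previous graph and halving the density, so that a $p^5$-regular partition of any $\d_0$-modified version is forced to (approximately) refine all $s$ level-partitions $\X_1,\ldots,\X_s$, and a multicolored variant for the decomposition of $K_{N,N}$. But the proposal has a genuine gap exactly where you flag ``the main obstacle I expect'': steps (2)--(3) are the entire content of the theorem and are only wished for, not carried out. Concretely, three things are missing. First, a mechanism that forces \emph{every} cluster $Z$ of \emph{every} regular partition of \emph{every} $\d_0$-close graph to exhibit, at the first level $r$ it fails to refine, a density signal of order $2^r p$ (not $p$) witnessed by two linear-sized subsets of $Z$; the paper gets this by replacing your ``random-like sparsification'' with deterministic sequences of \emph{balanced and orthogonal} bipartitions, so that a lemma of Gowers/Moshkovitz--Shapira type guarantees a constant fraction of the bipartitions at level $r$ split $Z$ nontrivially. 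Second, the transfer step: regularity of $\Z$ only speaks about pairs $(Z,Z')$ of the partition, whereas the signal lives on pairs $(Z,Y_{X,\ell'})$ where $Y_{X,\ell'}$ is half of a cluster of $\X_{r-1}$; the paper sums over the clusters $Z'$ essentially contained in $Y$ and must control the leftover vertices (the set $\B_R$), and this control requires the constructed graph itself to be quasirandom --- yet \emph{not too} quasirandom, since a truly pseudorandom density-$p$ bipartite graph has a $p^5$-regular partition of order $2$ and the theorem would be false for it. This tension is resolved by starting from $K_{k,k}$ with $k=\poly(1/p)$, which caps the quasirandomness at roughly $(p^{1/7})$-regularity; it also explains why the exponent $5$ is not a constant to ``absorb'': it must beat the loss $\sqrt{8|\X_0|\e}$ incurred when passing to the common refinement with $\X_0$, so it is tied to the base size. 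Third, the final accounting that sums the forced modifications over the $\Omega(2^{-r}|\X_{r-1}|)$ good clusters $Y$ and over the deviant $Z$'s to reach $\Omega(|E(G)|)$, again using the quasirandomness to show the $\B_R$-error is negligible in aggregate. None of these appears in the proposal, and your suggested route of ``diluting'' a known dense lower-bound construction does not by itself produce any of them.

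On the ``Furthermore'' clause your plan differs from the paper's and is the weaker of the two: you propose to fix a pseudorandom $1/p$-edge-coloring of $K_{n,n}$ first and then argue each color class carries a hard instance, which leaves you with exactly the ``interference'' verification you worry about. The paper instead builds the decomposition \emph{inside} the construction: at each level both halves of each bipartite blow-up are retained but in two new colors, so after $s$ iterations the edge set of $K_{N,N}$ is partitioned into $2^s=1/p$ graphs, each literally of the form $\Gs$, and the single-graph lower bound (via its robust form, which tolerates a $\sqrt{\d}$-fraction of modifications in at least one color class by averaging) applies to each class with no cross-color bookkeeping at all.
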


Theorems~\ref{theo:SRAL} and~\ref{theo:LB2} give us the following tight bound for SRAL.

\begin{coro} For every fixed $\delta \leq \delta_0$ and $C\geq 5$ we have
$$S(p^C,\delta_0,p)=\twr(\Theta(\log(1/p)))$$
\end{coro}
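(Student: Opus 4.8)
The plan is to obtain the corollary by simply sandwiching the upper bound of Theorem~\ref{theo:SRAL} and the lower bound of Theorem~\ref{theo:LB2}; essentially the only things one needs to supply are two trivial monotonicity properties of $S$ and a short computation with the tower function. First I would record that $S(\epsilon,\delta,p)$ is non-increasing in $\epsilon$ (an $\epsilon'$-regular equipartition with $\epsilon'\le\epsilon$ is a fortiori $\epsilon$-regular) and non-increasing in $\delta$ (enlarging the budget of edge modifications cannot hurt). Then fix a constant $\delta\le\delta_0$ and an integer $C\ge 5$, and let $p<1$.

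For the lower bound I would use monotonicity to reduce to the case actually treated by Theorem~\ref{theo:LB2}: since $p^C\le p^5$ and $\delta\le\delta_0$,
\[
S(p^C,\delta,p)\;\ge\; S(p^5,\delta_0,p)\;\ge\;\twr(c\log(1/p)),
\]
and the ``furthermore'' clause of Theorem~\ref{theo:LB2} moreover supplies the claimed decomposition of the complete bipartite graph into $1/p$ density-$p$ graphs each witnessing this estimate.

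For the upper bound I would plug $\epsilon=p^C$ and the given $\delta$ into Theorem~\ref{theo:SRAL}, getting $S(p^C,\delta,p)\le \twr_{1/p^C}\!\big(c\log(1/p)/\delta^2\big)$, and then verify that the parameter $1/p^C$ sitting at the top of this tower is negligible. Here I would use the crude bound $1/p^C\le\twr(O(\log\log(1/p)))$, valid for every fixed $C$, together with the identity $\twr_{\twr(d)}(t)=\twr(t+d)$ (the general form of the identity in the footnote after Proposition~\ref{theo:HyperRed}) and the monotonicity of $\twr_y(x)$ in $y$, to conclude
\[
\twr_{1/p^C}\!\big(c\log(1/p)/\delta^2\big)\;\le\;\twr\!\big(c\log(1/p)/\delta^2+O(\log\log(1/p))\big)\;=\;\twr(O(\log(1/p))),
\]
since $\delta$ is a fixed constant and $\log\log(1/p)=o(\log(1/p))$. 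Combining the two displays gives $S(p^C,\delta,p)=\twr(\Theta(\log(1/p)))$, with the hidden constants depending only on $\delta$ and $C$.

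I do not expect any real obstacle; the most delicate point is the tower bookkeeping in the last display, i.e.\ checking that absorbing the base-of-tower quantity $1/p^C$ into the height of the tower costs only $O(\log\log(1/p))$ additional levels, which is $o(\log(1/p))$ and hence disappears inside the $\Theta(\cdot)$. One should also remember to invoke monotonicity in both arguments, so that the single bound $S(p^5,\delta_0,p)\ge\twr(c\log(1/p))$ coming from Theorem~\ref{theo:LB2} indeed covers the full range $C\ge 5$, $\delta\le\delta_0$ asserted in the corollary.
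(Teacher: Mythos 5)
Your proposal is correct and follows essentially the same route as the paper, which states the corollary as an immediate combination of the upper bound in Theorem~\ref{theo:SRAL} and the lower bound in Theorem~\ref{theo:LB2}; the monotonicity of $S$ in $\epsilon$ and $\delta$ and the tower bookkeeping absorbing $1/p^C$ into $O(\log\log(1/p))$ extra levels are exactly the routine details left implicit there. No gaps.
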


The proof of~(\ref{eq:lower}) is by far the most complicated part of this paper. While the construction has a (relatively)
simple description, proving its correctness requires a very careful analysis, employing some ideas we used in~\cite{MoshkovitzSh14}, together with those of Gowers~\cite{Gowers97}.
The main difficulty in proving~(\ref{eq:lower}) lies in handling an absolute constant\footnote{As we remarked earlier, it is easy to give tower-type lower bounds for $S(\epsilon,\delta,p)$ if one allows $\delta$ to depend on $p$.} $\delta_0$ (we obtain $\delta_0 = 10^{-10}$ but make no effort to optimize it), i.e., even when the graph is very sparse and one is allowed to change a constant fraction of its edges!

It is hard to give a short overview of the proof of Theorem~\ref{theo:LB2} (nonetheless, we try to do so in Subsection~\ref{subsec:LB-overview}).
Let us thus only mention two interesting aspects of it. First, the graph we construct is designed to be ``hard'' for the proof of Theorem~\ref{theo:SRAL} based on the method of~\cite{Scott11} (the one we do not describe in this paper).
By this we mean that the idea is to show that in order
to find an $\epsilon$-regular partition of the graph (or even of a modified version of it), in a sense one cannot avoid
executing
the process of constructing the sequence of partitions ${\cal P}_i$ with the properties mentioned in the previous subsection.
A second interesting aspect is that although we want to show that the graph has no small $p^5$-regular partition (even after modifying it), it {\em does} essentially have a $p^{\frac87}$-regular partition of size $2$, namely the graph itself is quite quasirandom. This property is key to the analysis of the construction.

\subsection{An approach for hypergraph regularity lower bounds}

Returning to Proposition~\ref{theo:HyperRed}, inequality~(\ref{eq:lower}) implies that one cannot prove a tower-type upper bound for $3$-graph regularity even if using SRAL instead of the regularity lemma.
However, as we explain below, we believe that an even more important aspect of Theorem~\ref{theo:LB2} is in being a major step towards showing that
such an improvement is actually impossible.

All proofs of the $3$-graph regularity lemma proceed by iterating the graph regularity lemma, and more generally, all proofs of the $k$-graph regularity iterate the $(k-1)$-graph regularity lemma.
Yet, it seems that a lower bound proof for $3$-graph regularity does not follow by iterating a lower bound for the graph regularity lemma.
This can be explained by the fact that $3$-graph regularity can already be proved by iterating SRAL (as mentioned in the discussion leading to Proposition~\ref{theo:HyperRed}), which implies that any proof of a Wowzer-type lower bound for $3$-graphs would have to give, at least implicitly, a tower-type bound for SRAL.
It therefore seems to us that the correct approach for proving $3$-graph lower bounds is by iterating the SRAL lower bound instead.
More generally, we suggest that in order to prove lower bounds for the $k$-graph regularity lemma, one should ``strengthened the induction hypothesis'', that is, prove by induction a stronger statement---that $k$-graph SRAL requires a partition whose order is given by the $k$-th level in the Ackermann hierarchy.
One can thus view Theorem~\ref{theo:LB2} as the induction base in such a program.
We intend to return to this subject in the near future.
We give more details regarding the relevance of Theorem~\ref{theo:LB2} to lower bounds for hypergraph regularity in Subsection~\ref{subsec:LB-colors}.

\subsection{Paper organization}

The rest of the paper is organized as follows.
In Section~\ref{sec:Pert} we define a variant of the notion of weak regularity, state the corresponding regularity lemma and prove that a weak regular partition can be made regular by making an appropriate number of edge modifications.
The upper bound for SRAL, stated in Theorem~\ref{theo:SRAL}, is proved in Section~\ref{sec:Proof} using an iterated weak regularity lemma together with a new \emph{sparse} defect inequality.
Our reduction of the removal lemma to SRAL, stated in Theorem~\ref{theo:RemRed}, is proved in Section~\ref{sec:removal} using a variant of the well-known counting lemma suitable for applying it together with SRAL.
%
Finally, the lower bound for SRAL, stated in Theorem~\ref{theo:LB2}, is proved in Section~\ref{sec:LB}.
Regarding Proposition~\ref{theo:HyperRed}, since Theorem~\ref{theo:LB2} implies that the bound stipulated in~(\ref{eq:HyperRed}) does not hold, and since proving Proposition~\ref{theo:HyperRed} would require reproving the $3$-graph regularity lemma in its entirety, we felt that including its proof
would be redundant.


\section{From Weak Regularity to Regularity}\label{sec:Pert}

In this section we introduce a stronger notion of weak regularity,
and prove an upper bound on the number of edge modifications required to turn a weak regular bipartite graph into a regular graph.

\subsection{Preliminaries}

We use the following definitions in this section and throughout the paper.
The density between two vertex subsets $A,B$ in a graph $G$ is
$d_G(A,B)=e_G(A,B)/|A||B|$, where $e_G(A,B)$ is the number of ordered pairs $(u,v) \in A \times B$ with $u$ connected to $v$.
We say that the pair $(A,B)$ is \emph{$\epsilon$-regular} if $|d_G(A,B)-d_G(A',B')| \leq \epsilon$ for all $A' \subseteq A$ and $B' \subseteq B$ satisfying $|A'|\geq \epsilon|A|$ and $|B'|\geq \epsilon |B|$.
A vertex \emph{equipartition}\footnote{$\Z$ is an equipartition (or simply \emph{equitable}) if the sizes of all parts $Z_i$ differ by at most $1$.} $\Z=\{Z_1,\ldots,Z_k\}$ of $G$ is \emph{$\epsilon$-regular} if all pairs $(Z_i,Z_j)$ but at most $\epsilon k^2$ are $\epsilon$-regular.
The \emph{order} of $\Z$ is $k$.

Suppose $G=(V,E)$.
We say that $G'=(V,E')$ is \emph{$\d$-close} to $G$ if $G'$ can be obtained from $G$ by adding and/or removing at most $\d|E|$ edges (i.e., $|E \triangle E'| \le \d|E|$).
The density of $G$ is $d_G := 2|E|/|V|^2$.
We sometimes write $e_G(x,A)$ for $e_G(\{x\},A)$.
For partitions $\P,\Q$ we write $\Q \preceq \P$ if $\Q$ is a refinement of $\P$ (i.e., each part of $\Q$ is contained in a part of $\P$).

\subsection{Weak regularity}

The notion of weak regularity was introduced by Frieze and Kannan~\cite{FriezeKa99}, and is crucial for the proof of Theorem~\ref{theo:SRAL}.

In our proof we will require a somewhat stronger notion than usual, as follows.

\begin{definition}\label{dfn:WRp}
Given a graph $G=(V,E)$, a partition $\{V_1,\ldots,V_k\}$ of $V$ is \emph{weak $\e$-regular} if for all disjoint sets $S,T \sub V$ with $\s{S},\s{T} \ge \e\s{V}$ we have, denoting $S_i=S\cap V_i$ and $T_i = T\cap V_i$, 
$$\sum_{i,j=1}^k \frac{\s{S_i}\s{T_j}}{\s{S}\s{T}} \s{d(S_i,T_j)-d(V_i,V_j)} \le \e \;.$$
\end{definition}
For comparison, in the usual definition of a weak $\e$-regular partition we have
$$\bigg\lvert d(S,T) - \sum_{i,j=1}^k \frac{\s{S_i}\s{T_j}}{\s{S}\s{T}} d(V_i,V_j) \bigg\rvert 
= \bigg\lvert \sum_{i,j=1}^k \frac{\s{S_i}\s{T_j}}{\s{S}\s{T}} (d(S_i,T_j)-d(V_i,V_j)) \bigg\rvert \le \e \;,$$
that is, $\e$ bounds the deviation of the average \emph{difference} of $d(S_i,T_j)$ from its expected value.
In contrast, $\e$ in Definition~\ref{dfn:WRp} even bounds the average \emph{deviation} of $d(S_i,T_j)$ from its expected value. 

The weak regularity lemma 
asserts that every graph has a weak $\epsilon$-regular partition whose order depends merely exponentially on $1/\epsilon$, as opposed to the tower-type dependence on $1/\epsilon$ in the usual regularity lemma.

\begin{theo}
\label{theo:WRL}
Let $\e>0$.
For every graph and initial vertex equipartition $\P_0$ there is a weak $\e$-regular equipartition (in the sense of Definition~\ref{dfn:WRp}) refining $\P_0$ of order at most $\s{\P_0}\cdot 2^{\poly(1/\e)}$.
\end{theo}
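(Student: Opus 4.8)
The plan is to prove Theorem~\ref{theo:WRL} by the standard energy-increment (index-pumping) argument, adapted to the stronger notion of weak regularity from Definition~\ref{dfn:WRp}. The key point is that the stronger ``average deviation'' notion is, up to a polynomial loss in the parameter, equivalent to a mean-square-type index deficiency, so the usual potential-function machinery still applies and still yields a merely exponential (rather than tower-type) bound, since each refinement step multiplies the order by a singly-exponential factor in $1/\e$ and there are only $\poly(1/\e)$ steps.

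Concretely, I would first set up the mean-square index (energy) of an equipartition $\Z=\{Z_1,\dots,Z_k\}$ refining $\P_0$, namely $\ind(\Z)=\sum_{i,j}\frac{|Z_i||Z_j|}{|V|^2}d(Z_i,Z_j)^2$, which is a bounded quantity in $[0,1]$ and is non-decreasing under refinement (by convexity / Cauchy--Schwarz). Next, the main lemma: if an equipartition $\Z$ is \emph{not} weak $\e$-regular in the sense of Definition~\ref{dfn:WRp}, i.e.\ there exist disjoint $S,T$ with $|S|,|T|\ge\e|V|$ and $\sum_{i,j}\frac{|S_i||T_j|}{|S||T|}|d(S_i,T_j)-d(Z_i,Z_j)|>\e$, then one can refine $\Z$ into an equipartition $\Z'$ (with $|\Z'|\le |\Z|\cdot 2^{O(1/\e)}$, say) whose index exceeds that of $\Z$ by at least $\poly(\e)$ — something like $\ind(\Z')\ge\ind(\Z)+\e^4/4$ or a similar explicit polynomial bound. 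The construction of $\Z'$ is the usual one: partition each $Z_i$ according to the Venn-type cells determined by the four sets $S_i$, $T_i$, and their complements inside $Z_i$ (a bounded number of cells, which we then equitably subdivide to restore an equipartition, absorbing the rounding error as in Szemer\'edi's original proof). The index jump then comes from a defect form of Cauchy--Schwarz: the left-hand side of the Definition~\ref{dfn:WRp} inequality is an $\ell_1$-average of deviations, and squaring (Jensen, using the weights $\frac{|S_i||T_j|}{|S||T|}$) converts a violation of size $\e$ into an $\ell_2$-energy gain of size $\Omega(\e^2)$ localized on the sets $S,T$; multiplying by the densities $|S|/|V|\ge\e$ and $|T|/|V|\ge\e$ to pass from the $S,T$-restricted energy to the global energy of $\Z'$ costs another factor $\e^2$, giving the claimed $\Omega(\e^4)$ increment. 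Finally, iterate: starting from $\P_0$ and applying the main lemma repeatedly, the process must terminate after at most $4/\e^4=\poly(1/\e)$ steps since the index is bounded by $1$; the resulting partition is weak $\e$-regular, refines $\P_0$, and has order at most $|\P_0|\cdot(2^{O(1/\e)})^{\poly(1/\e)}=|\P_0|\cdot 2^{\poly(1/\e)}$.

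The main obstacle I anticipate is the defect-Cauchy--Schwarz step that quantifies the index gain, precisely because Definition~\ref{dfn:WRp} measures the $\ell_1$ average of the deviations $|d(S_i,T_j)-d(Z_i,Z_j)|$ rather than the $\ell_2$ average or (as in the classical weak regularity lemma) the absolute value of the signed average. One has to be careful that passing from $\ell_1$ to $\ell_2$ via Jensen goes in the favorable direction (it does: a large $\ell_1$-average forces a large $\ell_2$-average of the same weighted quantity), and then that the $\ell_2$-energy of the \emph{refinement restricted to} $S$ and $T$ — which is what the squared deviations actually bound — can be lower-bounded by the energy of the refinement on all of $V$, which is where the two factors of $\e$ (the lower bounds on $|S|/|V|$ and $|T|/|V|$) enter. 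A secondary, purely technical nuisance is the equitability bookkeeping: after cutting each $Z_i$ into the constantly-many $S,T$-cells one must re-equipartition, and one has to check that the rounding loss in the index is negligible compared to the $\poly(\e)$ gain, which is routine and handled exactly as in the standard proofs (e.g.\ choosing the common part size small enough, or tolerating an $o(\e^4)$ error absorbed into the constant). Everything else — monotonicity of the index, boundedness, and the termination count — is standard.
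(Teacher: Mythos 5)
Your proposal is correct and follows essentially the same route as the paper's proof in the appendix: the mean-square index, refinement of each part by the cells $S_i,T_i$ and their complement, the variance identity plus Jensen/Cauchy--Schwarz to convert the $\ell_1$-violation of size $\e$ into an index gain of $|S||T|\e^2/|V|^2\ge\e^4$, the re-equitization absorbing half of that gain, and termination after $O(1/\e^4)$ steps. The only (harmless) discrepancy is that each step in fact multiplies the order by only $O(1/\e^4)$ rather than $2^{O(1/\e)}$, which still yields the claimed $2^{\poly(1/\e)}$ bound.
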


We note that we made no effort to optimize the bound in Theorem~\ref{theo:WRL}.
The proof of this (stronger) weak regularity lemma is almost identical to the proof of the Frieze-Kannan weak regularity lemma, and for completeness we give the full proof in the appendix.

\subsection{Perturbation lemma}

\newcommand{\nep}[1]{{#1}'}
\newcommand{\nepp}[1]{\widetilde{#1}}

The main ingredient in the proof of Theorem~\ref{theo:SRAL} is a lemma showing that any weak regular partition can be made into a ``genuine'' regular partition by applying an appropriate perturbation.
This is formally stated in the following lemma.

\begin{lemma}\label{lemma:ConlonFox2}
Let $G$ be a bipartite graph of density $d$ with vertex classes $(A,B)$,
and let $\A \cup \B$ be a weak $\e$-regular partition
of $G$, where $\A=\{A_i\}_i$ and $\B=\{B_j\}_j$ partition $A$ and $B$, respectively; that is, for every $S \sub A$, $T \sub B$ with $\s{S} \ge \e\s{A}, \s{T} \ge \e\s{B}$ we have, denoting $S_i=S\cap A_i$, $T_j = T\cap B_j$ and $d_{i,j} = d_G(A_i,B_j)$, that
\begin{equation}\label{eq:WR-bip}
\sum_{i,j} \frac{\s{S_i}\s{T_j}}{\s{S}\s{T}} \s{d_G(S_i,T_j)-d_{i,j}} \le \e \;.
\end{equation}
If $\s{A},\s{B}\ge 8/\e^4$, one can turn $G$ into a $2\e$-regular graph $\nepp{G}$ by modifying at most $\Delta$ edges where
$$\Delta = \sum_{i,j} \s{d_{i,j}-d}\size{A_i}\size{B_j} \;.$$
\end{lemma}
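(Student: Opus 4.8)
The plan is to replace each bipartite graph between $A_i$ and $B_j$ by a quasirandom graph of the \emph{same} density $d_{i,j}$, but to do this in a way that modifies few edges globally, and then to argue that the resulting partition is $2\e$-regular. The first step is to recall that any bipartite graph on parts of size $m,n$ and density $q$ can be turned, by modifying at most $O(q^{1/2}mn)$ edges (or some similar polynomial in $q$), into a genuinely quasirandom bipartite graph of density exactly $q$ — essentially by taking a random-like graph $H_{i,j}$ of density $q$ and noting that the expected symmetric difference with the original is at most $2q(1-q)mn$ and is concentrated. Hence, at the cost of at most $\sum_{i,j} q_{i,j}mn$-ish modifications (where $q_{i,j}$ is the density $d_{i,j}$), we can assume every pair $(A_i,B_j)$ is, say, $\e^2$-regular. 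But this naive bound costs roughly $d \cdot |A||B|$ modifications in total, not $\Delta$; to get $\Delta$ we instead leave the pairs $(A_i,B_j)$ untouched and only handle the deviation of $d_{i,j}$ from the global density $d$. This is where the weak $\e$-regularity hypothesis enters.

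So the actual plan is: define $\widetilde G$ by, in each pair $(A_i,B_j)$, either adding or deleting exactly $|d_{i,j}-d|\,|A_i||B_j|$ edges so as to bring the density of that pair to $d$. (If $d_{i,j}>d$ delete $\lceil (d_{i,j}-d)|A_i||B_j|\rceil$ edges; if $d_{i,j}<d$ add that many.) The total number of modifications is then exactly $\Delta=\sum_{i,j}|d_{i,j}-d|\,|A_i||B_j|$, up to rounding which we absorb into the slack. Now $\widetilde G$ is a graph in which every pair $(A_i,B_j)$ has density exactly $d$, and the partition $\A\cup\B$ is still weak $O(\e)$-regular for $\widetilde G$: deleting/adding a $|d_{i,j}-d|$-fraction inside each block changes $d_{\widetilde G}(S_i,T_j)$ from $d_G(S_i,T_j)$ by at most $|d_{i,j}-d|$ plus a lower-order term coming from how unevenly we place the new edges (we should place them quasirandomly inside each block, which costs nothing extra in the modification count). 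The key computation is then: for any $S\sub A$, $T\sub B$ with $|S|\ge 2\e|A|$, $|T|\ge 2\e|B|$,
$$|d_{\widetilde G}(S,T)-d| \le \sum_{i,j}\frac{|S_i||T_j|}{|S||T|}\,|d_{\widetilde G}(S_i,T_j)-d| \le \sum_{i,j}\frac{|S_i||T_j|}{|S||T|}\Big(|d_G(S_i,T_j)-d_{i,j}| + |d_{i,j}-d| + o(1)\Big).$$
The first sum over $|d_G(S_i,T_j)-d_{i,j}|$ is at most $\e$ by the weak-regularity hypothesis \eqref{eq:WR-bip} (rescaling $\e$ appropriately because $|S|,|T|$ shrank by a factor $2$). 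The middle term $\sum \frac{|S_i||T_j|}{|S||T|}|d_{i,j}-d|$ is \emph{not} obviously small — and this is the main obstacle, discussed below. The $o(1)$ error is controlled by the hypothesis $|A|,|B|\ge 8/\e^4$.

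\textbf{The main obstacle} is precisely the term $\sum_{i,j}\frac{|S_i||T_j|}{|S||T|}|d_{i,j}-d|$: the weak-regularity assumption bounds the \emph{deviation of $d_G(S_i,T_j)$ from $d_{i,j}$}, not the deviation of $d_{i,j}$ itself from the global density. To handle this, the right move is to apply the weak-regularity inequality with the specific choice $S=A$, $T=B$ — or rather with $S_i=A_i$, $T_j=B_j$ — which gives $\sum_{i,j}\frac{|A_i||B_j|}{|A||B|}|d_G(A_i,B_j)-d_{i,j}| = 0$ trivially, so that is useless; instead one observes that $d = d_G(A,B)$ and expands $d = \sum_{i,j}\frac{|A_i||B_j|}{|A||B|} d_{i,j}$, so $\sum_{i,j}\frac{|A_i||B_j|}{|A||B|}|d_{i,j}-d|$ is an \emph{average} $\ell_1$-deviation that need not be small. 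The resolution — and I expect this is the crux of the argument — is that after the perturbation we are \emph{not} trying to show $(A,B)$ itself is regular as a single pair; we are trying to show $\widetilde G$ (which has all block-densities equal to $d$) is $2\e$-regular \emph{as a graph}, i.e. for arbitrary $S,T$. Since in $\widetilde G$ every $d_{\widetilde G}(A_i,B_j)$ equals $d$ by construction, the troublesome term disappears entirely: we only need $|d_{\widetilde G}(S_i,T_j) - d| \le |d_{\widetilde G}(S_i,T_j)-d_{\widetilde G}(A_i,B_j)|$, and the latter is bounded by combining the weak-regularity of $\A\cup\B$ for $\widetilde G$ (inherited from $G$ up to the $|d_{i,j}-d|$ shift, which is exactly the perturbation we made, hence already accounted) with Cauchy–Schwarz to pass from the $\ell_1$ average over blocks to the overall density. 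So the clean order of steps is: (1) perform the density-equalizing perturbation, counting exactly $\Delta$ edits; (2) verify $\A\cup\B$ is weak $2\e$-regular for $\widetilde G$; (3) since all pair-densities in $\widetilde G$ equal $d$, deduce from weak regularity plus the size bound $|A|,|B|\ge 8/\e^4$ that $|d_{\widetilde G}(S,T)-d|\le 2\e$ for all large $S,T$, i.e. $\widetilde G$ is $2\e$-regular. Step (2) is the delicate bookkeeping step and step (3) is where the weak-to-genuine upgrade happens, exploiting that a graph with all block densities equal and weak regularity is automatically quasirandom.
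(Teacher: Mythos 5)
Your high-level plan is the same as the paper's (equalize every block density $d_{i,j}$ to $d$ by quasirandomly placed edits, pay $\Delta$, then use weak regularity to bound $|d_{\widetilde G}(S,T)-d|$), but the step you yourself identify as the crux is never actually proved, and as written it does not close. The assertion that the weak regularity of $\A\cup\B$ is ``inherited from $G$ up to the $|d_{i,j}-d|$ shift, hence already accounted'' is exactly the statement that needs proof: what you must show is that the edits inside each block $(A_i,B_j)$ can be chosen so that, \emph{simultaneously for all} $S\sub A$, $T\sub B$ (exponentially many choices) and all blocks, one has $|d_{\widetilde G}(S_i,T_j)-d|\le |d_G(S_i,T_j)-d_{i,j}|$ up to a small additive error measured at the level of the whole pair $(S,T)$. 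Your additive bound ``$d_{\widetilde G}(S_i,T_j)$ differs from $d_G(S_i,T_j)$ by at most $|d_{i,j}-d|$ plus a lower-order term'' is insufficient (it reintroduces the uncontrolled term $\sum_{i,j}\frac{\s{S_i}\s{T_j}}{\s{S}\s{T}}|d_{i,j}-d|$, which can be of order $d$), and it is also not what a quasirandom placement gives you when deleting: deletions are constrained to existing edges, so the correct statement is multiplicative, not additive. The Cauchy--Schwarz remark is a red herring --- passing from per-block deviations to $|d_{\widetilde G}(S,T)-d|$ needs only the triangle inequality, and Cauchy--Schwarz points in the wrong direction. Finally, the ``$o(1)$ error controlled by $|A|,|B|\ge 8/\e^4$'' is never instantiated: since the sub-blocks $S_i\times T_j$ can be tiny, no per-block concentration is available, and one has to control the deviation at the level of $e_{\widetilde G}(S,T)$ with a union bound over the $2^{|A|+|B|}$ choices of $(S,T)$; that is precisely where the hypothesis on $|A|,|B|$ is used.

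For comparison, the paper makes your sketch precise as follows: it deletes each edge of a dense block independently with probability $\frac{d_{i,j}-d}{d_{i,j}}$ (and adds non-edges of a sparse block with probability $\frac{d-d_{i,j}}{1-d_{i,j}}$), so that for \emph{every} $X\sub A_i$, $Y\sub B_j$ one has the exact contraction $|\Ex d(X,Y)-d|\le |d_G(X,Y)-d_{i,j}|$ in expectation; summing over blocks and invoking~(\ref{eq:WR-bip}) gives $|\Ex d(S,T)-d|\le\e$. Chernoff plus a union bound over all $S,T$ with $\s{S}\ge 2\e\s{A}$, $\s{T}\ge 2\e\s{B}$ (this is where $\s{A},\s{B}\ge 8/\e^4$ enters) controls the deviation from expectation, and since the expected number of edits is exactly $\Delta$, one final step undoes the (at most $\e^3|A||B|$) excess edits, degrading $3\e/2$-regularity to $2\e$-regularity because $\s{S}\s{T}\ge 4\e^2\s{A}\s{B}$. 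Your deterministic ``edit exactly $|d_{i,j}-d|\s{A_i}\s{B_j}$ edges per block'' version could be salvaged, but only by running essentially this probabilistic argument to certify that a suitable edit set exists; as it stands, the existence of edits with the required uniform sub-block behaviour is assumed rather than established.
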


\begin{proof}
The idea is to add/remove edges between each pair $(A_i,B_j)$ so as to equate their densities to $d_G$. We show that if this is done in a random manner then, with high probability, the modified graph is $2\e$-regular.\footnote{In fact, $(\e+o(1))$-regular as $|V(G)| \to \infty$.}
We henceforth assume $\e \le 1/2$, as otherwise there is nothing to prove.

Formally, we do the following for each pair $(A_i,B_j)$.
If $d_{i,j} = d$ we do nothing.
If $d_{i,j} > d$ we remove each edge of $G$ between $A_i$ and $B_j$ independently with probability $p_{i,j}:=\frac{d_{i,j}-d}{d_{i,j}}$.
If $d_{i,j} < d$ we add each non-edge of $G$ between $A_i$ and $B_j$ independently with probability $p'_{i,j}:=\frac{d-d_{i,j}}{1-d_{i,j}}$.
Let $\nep{G}$ be the random graph obtained from $G$ after applying the above procedure for all pairs $(A_i,B_j)$.
Clearly $\Ex d_{\nep{G}}(A_i,B_j) = d$ for every $i,j$, and so
\begin{equation}\label{eq:total_density}
\Ex d_{\nep{G}}=d.
\end{equation}
Moreover, the number $|\nep{G} \sd G|$ of edge modifications thus made satisfies
$$\Ex |\nep{G} \sd G| = \sum_{i,j} \s{d_{i,j}-d} \s{A_i}\s{B_j} = \Delta \;.$$
Since the random variable $|\nep{G} \sd G|$ is a sum of (at most) $\s{A}\s{B}$ mutually independent indicator random variables, we have by Chernoff's inequality that
\begin{equation}\label{eq:ChernoffM}
\Pr\big[ |\nep{G} \sd G| - \Delta > \e^3|A||B| \big]
< \exp\big(-2(\e^3|A||B|)^2/\s{A}\s{B}\big)
= \exp(-2\e^6\s{A}\s{B}) \le 1/6 \;,
\end{equation}
where the last inequality follows from the lemma's assumption that $|A|,|B| \ge 1/\e^4$.
Furthermore, for $S\subseteq A$, $T\subseteq B$ with $\s{S}\ge 2\e\s{A}$ and $\s{T}\ge 2\e\s{B}$, the random variable $e_{\nep{G}}(S,T)$ is a sum of (at most) $\s{S}\s{T}$ mutually independent indicator random variables, so by Chernoff's inequality,
\begin{equation}\label{eq:ChernoffST}
\Pr\big[ |e_{\nep{G}}(S,T)-\Ex e_{\nep{G}}(S,T)| > (\e/4)|S||T| \big] < 2\exp(-2(\e/4)^2\s{S}\s{T}) \le 2\exp(-\e^4\s{A}\s{B}/2) \;.
\end{equation}
Note that the same bounds applies to $e_{\nep{G}}(A,B)$, that is,
\begin{equation}\label{eq:ChernoffAB}
\Pr\big[|e_{\nep{G}}(A,B)-\Ex e_{\nep{G}}(A,B)|> (\e/4)|A||B| \big]
< 2\exp(-\e^4\s{A}\s{B}/2)
\le 2 \cdot 1/6 \;.
\end{equation}
(The last inequality above may be deduced from the last inequality in~(\ref{eq:ChernoffM}) as
$\e^4/2 \ge 2\e^6$.)
Applying the union bound on~(\ref{eq:ChernoffST}), we get
\begin{equation}\label{eq:union-bound}
\begin{split}
\Pr\big[ \exists S,T:\, |d_{\nep{G}}(S,T)-\Ex d_{\nep{G}}(S,T)| > \e/4 \big]
&< 2^{\s{A}+\s{B}} \cdot 2\cdot 2^{-\e^4\s{A}\s{B}/2} \\
&\le 2 \cdot 2^{\s{A}(2-\e^4\s{B}/2)}
\le 2 \cdot 2^{-2\s{A}} \le 1/2
\end{split}
\end{equation}
with $S,T$ as above (i.e., $|S| \ge 2\e|A|, |T| \ge 2\e|B|$), where in the first inequality we assumed $\s{A} \ge \s{B}$ without loss of generality, and in the second inequality we used the assumption that $\s{A},\s{B} \ge 8/\e^4$.

Henceforth, let $S\subseteq A$, $T\subseteq B$ satisfy $\s{S}\ge 2\e\s{A},\,\s{T}\ge 2\e\s{B}$.
The crux of the proof is the claim that 
\begin{equation}\label{eq:crux}
\s{\Ex d_{\nep{G}}(S,T) - d} \le \e\;.
\end{equation}
For this we will first need to prove that for any $X \sub A_i$, $Y \sub B_j$ we have
\begin{equation}\label{eq:H_deviation}
\size{\Ex d_{\nep{G}}(X,Y) - d} \le \size{d_{G}(X,Y)-d_{i,j}} \;.
\end{equation}
Recalling the construction of $G'$ at the beginning of the proof, we need to consider three cases.
First, if $d_{i,j}=d$ then~(\ref{eq:H_deviation}) is trivial. Second, if $d_{i,j} > d$ then, setting $q_{i,j} := 1-p_{i,j} = \frac{d}{d_{i,j}}$, we have
$$\s{\Ex d_{\nep{G}}(X,Y) - d} = \s{q_{i,j}d_G(X,Y) - d}
= q_{i,j}\s{d_G(X,Y) - d_{i,j}}
\le \s{d_G(X,Y) - d_{i,j}} \;.$$
Finally, if $d_{i,j} < d$ then, setting $q'_{i,j} := 1-p'_{i,j} = \frac{1-d}{1-d_{i,j}}$, we have
\begin{align*}
\s{\Ex d_{\nep{G}}(X,Y) - d} &= \s{d_G(X,Y)+p'_{i,j}(1-d_G(X,Y)) - d}
= \s{d_G(X,Y)q'_{i,j}+p'_{i,j}-d}\\
&= \s{d_G(X,Y)q'_{i,j}-q'_{i,j}+(1-d)}
= q'_{i,j}\s{d_G(X,Y)-d_{i,j}} \\
&\le \s{d_G(X,Y)-d_{i,j}} \;.
\end{align*}
Having established~(\ref{eq:H_deviation}), we now prove~(\ref{eq:crux}). Denoting $S_i=S\cap A_i$ and $T_j=T\cap B_j$, we indeed have
\begin{align*}
\s{\Ex d_{\nep{G}}(S,T) - d} &=
\bigg\lvert \sum_{i,j} \frac{\s{S_i}\s{T_j}}{\s{S}\s{T}}
\big( \Ex d_{\nep{G}}(S_i,T_j) - d \big) \bigg\rvert
\le \sum_{i,j} \frac{\s{S_i}\s{T_j}}{\s{S}\s{T}}
\size{\Ex d_{\nep{G}}(S_i,T_j) - d}\\
&\le \sum_{i,j} \frac{\s{S_i}\s{T_j}}{\s{S}\s{T}} \size{d_G(S_i,T_j)-d_{i,j}} \le \e \;,
\end{align*}
where the second inequality follows from~(\ref{eq:H_deviation}) with $X=S_i$ and $Y=T_j$, and the last inequality follows from the lemma's assumption that $\A \cup \B$ is a weak $\e$-regular partition of $G$, that is,~(\ref{eq:WR-bip}).

We deduce from~(\ref{eq:ChernoffM}), (\ref{eq:ChernoffAB}) and~(\ref{eq:union-bound}) that there exists a graph, which we also denote by $\nep{G}$ with a slight abuse of notation, that satisfies:
\begin{itemize}
\item $|\nep{G} \sd G| \le \Delta+\e^3|A||B|$,
\item $\s{d_{\nep{G}}-\Ex d_{\nep{G}}} \le \e/4$,
\item $\s{d_{\nep{G}}(S,T)-\Ex d_{\nep{G}}(S,T)} \le \e/4$ for every $S,T$ as above.
\end{itemize}
Note that $\nep{G}$ is $3\e/2$-regular, since for every $S,T$ as above we have
\begin{equation}\label{eq:triangle-ineq}
\begin{split}
\big\lvert d_{\nep{G}}(S,T) - d_{\nep{G}} \big\rvert &\le
\s{d_{\nep{G}}(S,T)-\Ex d_{\nep{G}}(S,T)}
+ \big\lvert \Ex d_{\nep{G}}(S,T) - d \big\rvert
+ \s{d-d_{\nep{G}}} \\
&\le \e/4 + \e + \e/4 = 3\e/2 \;,
\end{split}
\end{equation}
where to bound the first summand we used the third property of $G'$, to bound the second summand we used~(\ref{eq:crux}) and to bound the third summand we used the second property of $G'$ together with~(\ref{eq:total_density}).

Finally, let $\nepp{G}$ be obtained from $\nep{G}$ by undoing some of the edge modifications, arbitrarily chosen, so that $|\nepp{G} \sd G| \le \Delta$.
It remains to show that $\nepp{G}$ is $2\e$-regular.
Indeed, for every $S,T$ as above,
$$|d_{\nepp{G}}(S,T)-d_{\nepp{G}}| \le |d_{\nepp{G}}(S,T)-d_{\nep{G}}(S,T)|+|d_{\nep{G}}(S,T)-d_{\nep{G}}|+|d_{\nep{G}}-d_{\nepp{G}}| \le 2 \frac{\e^3|A||B|}{|S||T|} + 3\e/2 \le 2\e,$$
where to bound the first and third summands we used the first property of $G'$ and to bound the second summand we used~(\ref{eq:triangle-ineq}).
\end{proof}


\section{Upper Bound for SRAL}\label{sec:Proof}

In this section we prove Theorem~\ref{theo:SRAL}.
The proof combines the perturbation lemma from Section~\ref{sec:Pert} with an iterated weak regularity using an entropy potential function which we prove here.

\subsection{Entropy defect}\label{subsec:entropy}

Let the function $H:\RR^+\to\RR$ be given by $$H(x)=x\ln x \;,$$
where henceforth $0\ln 0 = 0.$
Note that $H$ is a convex function.
We will use $H$ to define a potential function for vertex partitions. 
Crucially, we will need a ``uniform'' version of a defect inequality for $H$,  which quantifies how convex $H$ is in the following sense.
The precise statement is the following.
\begin{lemma}\label{lemma:defect}
Let $d_1,\ldots,d_N,p_1,\ldots,p_N \ge 0$ satisfy $\sum_{i=1}^N p_i = 1$ and $d:=\sum_{i=1}^N p_i d_i \neq 0$. Then
$$\sum_{i=1}^N p_i H(d_i) - H(d) \ge \frac12 d \bigg( \sum_{i=1}^N p_i \Big|\frac{d_i}{d} - 1\Big| \bigg)^2 \;.$$
\end{lemma}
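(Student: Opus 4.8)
The plan is to reduce everything to a one-dimensional statement about the function $\phi(t) = H(t) = t\ln t$ after normalizing by $d$. Write $x_i := d_i/d \ge 0$, so that $\sum_i p_i x_i = 1$. Using $H(d_i) = H(dx_i) = dx_i\ln(dx_i) = d\,x_i\ln x_i + d\,x_i\ln d$, and summing against $p_i$, the terms $d x_i \ln d$ sum to $d\ln d\sum_i p_i x_i = d\ln d = H(d)$. Hence
$$\sum_{i=1}^N p_i H(d_i) - H(d) = d\sum_{i=1}^N p_i\, x_i\ln x_i = d\sum_{i=1}^N p_i\, H(x_i)\;,$$
and since $\sum_i p_i x_i = 1$ we have $H\big(\sum_i p_i x_i\big) = H(1) = 0$, so the left-hand side equals $d\cdot\big(\sum_i p_i H(x_i) - H(1)\big)$. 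Thus the claim is equivalent to the $d$-free inequality
$$\sum_{i=1}^N p_i\, x_i\ln x_i \;\ge\; \tfrac12\Big(\sum_{i=1}^N p_i\,|x_i - 1|\Big)^2\;,$$
for nonnegative $x_i$ with $\sum_i p_i x_i = 1$. I would state this normalization as the first step and then devote the rest of the proof to this cleaner inequality.

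For the reduced inequality I would invoke Pinsker's inequality, which is exactly the tool the introduction flags as being used (``Pinsker's inequality from information theory''). Think of $p$ as a probability vector and of $q_i := p_i x_i$ as another probability vector (it sums to $1$ by hypothesis, and each $q_i\ge 0$). Then $\sum_i p_i x_i\ln x_i = \sum_i q_i \ln(q_i/p_i) = D(q\,\|\,p)$ is the Kullback–Leibler divergence, while $\sum_i p_i|x_i-1| = \sum_i |q_i - p_i| = \|q - p\|_1$ is (twice) the total variation distance. Pinsker's inequality in the form $D(q\|p) \ge \tfrac12\|q-p\|_1^2$ is precisely the desired bound. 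I would include a short self-contained justification of Pinsker in this discrete form — for instance, the standard reduction to the two-point case via the pointwise inequality $a\ln(a/b) - a + b \ge \tfrac{3}{2(2a+b)}(a-b)^2$ summed over $i$ (the subtracted terms $-a+b$ cancel after summation since both vectors are probability distributions), followed by Cauchy–Schwarz against the weights $2q_i+p_i$ whose total is $3$ — or simply cite it.

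One technical point to handle carefully: the lemma as stated does not require $\sum_i p_i d_i$ beyond $d\neq 0$, and individual $d_i$ (hence $x_i$) may be zero, where $H(x_i)=0$ by the stated convention $0\ln 0 = 0$; this is consistent with treating $q_i = 0$ in the KL divergence (the term contributes $0$), so no separate case analysis is needed there. Also $d$ could in principle be negative if the $d_i$ are allowed to be negative — but the hypothesis states $d_i,p_i\ge 0$, so $d>0$ and the division $x_i=d_i/d$ is legitimate. The main obstacle, such as it is, is not conceptual but presentational: making the identification with KL divergence and total variation clean, and deciding how much of Pinsker's proof to reproduce versus cite. I expect no genuine difficulty beyond that; once the normalization in the first paragraph is in place, the inequality is Pinsker verbatim.
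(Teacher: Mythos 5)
Your proof is correct and is essentially identical to the paper's: the paper also sets $q_i = p_i d_i/d$, observes that $(q_1,\ldots,q_N)$ is a probability vector, rewrites $\sum_i p_i H(d_i) - H(d) = d\sum_i q_i\ln(q_i/p_i)$, and concludes by Pinsker's inequality (which the paper cites rather than reproves). The only cosmetic difference is your intermediate normalization $x_i = d_i/d$; the paper also notes the harmless reduction to $p_i \neq 0$, matching your remark about the degenerate terms.
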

For the proof of Lemma~\ref{lemma:defect} we will use Pinsker's inequality from Information Theory (\cite{Pinsker64}, see also Lemma~6.2 in~\cite{Gray11}), which lower bounds the Kullback-Leibler divergence of one probability distribution from another in terms of the total variation distance between the two distributions.
\begin{theo}[Pinsker's inequality]\label{thm:Pinsker}
Let $P=(p_1,\ldots,p_N)$, $Q=(q_1,\ldots,q_N)$ satisfy $p_i > 0,q_i \ge 0$ and $\sum_{i=1}^N p_i = 1$, $\sum_{i=1}^N q_i = 1$.
Then $D_{KL}(Q \| P) \ge 2\d(Q,P)^2$, that is,
$$\sum_{i=1}^N q_i \ln(q_i/p_i) \ge \frac12\Big(\sum_{i=1}^N |q_i-p_i| \Big)^2 \;.$$
\end{theo}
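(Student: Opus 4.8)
The plan is to prove Pinsker's inequality by the classical reduction to the binary case. First I would record the \emph{log-sum inequality}: for nonnegative reals $a_1,\ldots,a_m$ and positive reals $b_1,\ldots,b_m$ with $a:=\sum_i a_i$ and $b:=\sum_i b_i$, one has $\sum_i a_i\ln(a_i/b_i)\ge a\ln(a/b)$ (with the convention $0\ln 0=0$). This is immediate from convexity of $H(x)=x\ln x$: writing $\sum_i a_i\ln(a_i/b_i)=b\sum_i (b_i/b)\,H(a_i/b_i)$ and applying Jensen's inequality with weights $b_i/b$ gives $\ge b\,H\big(\sum_i (b_i/b)(a_i/b_i)\big)=b\,H(a/b)=a\ln(a/b)$.

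\textbf{The binary case.} Fix $q\in[0,1]$ and, for $t\in(0,1)$, set
$$\phi(t):=q\ln\frac{q}{t}+(1-q)\ln\frac{1-q}{1-t}-2(q-t)^2 .$$
A direct computation gives
$$\phi'(t)=-\frac{q}{t}+\frac{1-q}{1-t}+4(q-t)=\frac{t-q}{t(1-t)}+4(q-t)=(t-q)\Big(\frac{1}{t(1-t)}-4\Big),$$
and since $0<t(1-t)\le \tfrac14$ the second factor is nonnegative on $(0,1)$. Hence, for $q\in(0,1)$, $\phi$ is nonincreasing on $(0,q]$ and nondecreasing on $[q,1)$, so its infimum is $\phi(q)=0$; for $q\in\{0,1\}$ the sign of $\phi'$ shows $\phi$ is monotone on $(0,1)$ with boundary value $0$. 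In all cases $\phi\ge 0$, i.e.
$$q\ln\frac{q}{t}+(1-q)\ln\frac{1-q}{1-t}\ \ge\ 2(q-t)^2\qquad\text{for all }q\in[0,1],\ t\in(0,1).$$

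\textbf{Reduction.} Given $P,Q$ as in the statement, put $A=\{i:q_i\ge p_i\}$. If $A=\{1,\dots,N\}$ then $\sum_i q_i\ge\sum_i p_i$ forces $q_i=p_i$ for every $i$ and both sides of the desired inequality are $0$; so assume $A\ne\emptyset$ and $A\ne\{1,\dots,N\}$, and let $p:=\sum_{i\in A}p_i\in(0,1)$ and $q:=\sum_{i\in A}q_i\ge p$. Splitting the sum defining the total variation distance over $A$ and its complement gives $\sum_i|q_i-p_i|=(q-p)+\big((1-p)-(1-q)\big)=2(q-p)$. Applying the log-sum inequality (with $b_i=p_i>0$) separately over the index sets $A$ and $A^c$, and then the binary case with $t=p$, we obtain
$$\sum_{i=1}^N q_i\ln\frac{q_i}{p_i}\ \ge\ q\ln\frac{q}{p}+(1-q)\ln\frac{1-q}{1-p}\ \ge\ 2(q-p)^2\ =\ \tfrac12\Big(\sum_{i=1}^N|q_i-p_i|\Big)^2 ,$$
which is exactly the claimed bound.

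\textbf{Main difficulty.} Pinsker's inequality is classical, so there is no conceptual obstacle; the care needed is bookkeeping: ensuring the $0\ln 0=0$ convention is respected when some $q_i=0$ in the log-sum step, and disposing of the degenerate configurations ($A$ or $A^c$ empty, $q\in\{0,1\}$) so that no expression such as $q/p$ or $\ln(1-t)$ is undefined. The only genuine computation is the factorization $\phi'(t)=(t-q)\big(\tfrac1{t(1-t)}-4\big)$ combined with $t(1-t)\le \tfrac14$; this is precisely what pins down the optimal constant (the $\tfrac12$ in the theorem, equivalently the $2$ in the binary estimate).
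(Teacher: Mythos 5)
Your proof is correct. Note, however, that the paper does not actually prove Theorem~\ref{thm:Pinsker}: it is invoked as a known result, with references to \cite{Pinsker64} and to Lemma~6.2 of \cite{Gray11}, and is only \emph{used} in the proof of Lemma~\ref{lemma:defect}. So there is no in-paper argument to compare against; what you have written is the standard textbook proof (reduce to the binary case via the log-sum inequality applied to $A=\{i: q_i\ge p_i\}$ and its complement, then verify the two-point inequality by the derivative computation $\phi'(t)=(t-q)\big(\tfrac{1}{t(1-t)}-4\big)$ together with $t(1-t)\le\tfrac14$), and it does yield the sharp constant that the paper's statement requires. Your bookkeeping of the degenerate cases ($q_i=0$ under the $0\ln 0=0$ convention, $A$ or $A^c$ empty, $q\in\{0,1\}$) is handled correctly, and the identity $\sum_i|q_i-p_i|=2(q-p)$ makes the final constant match. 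The only stylistic remark is that within the paper such a self-contained proof would be redundant, since the authors deliberately cite the inequality rather than reprove it; but as a blind verification of the stated theorem, your argument is complete and sound.
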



\begin{proof}[Proof of Lemma~\ref{lemma:defect}]
Write $q_i = p_i d_i/d$ and note that 
\begin{equation}\label{eq:distribution}
q_i \ge 0 \quad\text{ and }\quad \sum_{i=1}^N q_i = \sum_{i=1}^N p_i d_i/d = 1 \;.
\end{equation}
Assume without loss of generality that $p_i \neq 0$ for every $i$.
By the definition of $H$ we have
$$\sum_{i=1}^N p_i H(d_i)
= \sum_{i=1}^N p_i d_i \ln(q_i d/p_i)
=  \sum_{i=1}^N p_i d_i \ln(q_i/p_i) + \sum_{i=1}^N p_i d_i \ln d
= d\sum_{i=1}^N q_i \ln(q_i/p_i) + H(d) \;.$$
Since $(q_1,\ldots,q_N)$ is a probability distribution by~(\ref{eq:distribution}), we may apply Theorem~\ref{thm:Pinsker} and deduce
$$\sum_{i=1}^N p_i H(d_i) - H(d) = d\sum_{i=1}^N q_i \ln(q_i/p_i) \ge d \cdot \frac12\Big(\sum_{i=1}^N |q_i-p_i| \Big)^2
= d \cdot \frac12\Big(\sum_{i=1}^N p_i|d_i/d-1| \Big)^2 \;,$$
as needed.
\end{proof}

\subsection{Potential function}

For the rest of this subsection let $G$ be an $n$-vertex graph.
We define the ``potential'' of a partition $\P$ of $V(G)$ by
\begin{equation}\label{eq:potential}
\H(\P) = \sum_{V,V' \in \P}\, \frac{|V||V'|}{n^2} H(d(V,V')) \;,
\end{equation}
where we recall that $H(x)=x\ln x$.
Note that the summation in~(\ref{eq:potential}) is over ordered pairs  $(V,V')$.
It will be convenient to generalize the above definition.
Henceforth, let $\P$ be a partition of $A \sub V(G)$ and $\P'$ be a partition of $A' \sub V(G)$. We more generally define
$$\H(\P,\P') = \sum_{\substack{V \in \P\\ V' \in \P'}}\, \frac{|V||V'|}{|A||A'|} H(d(V,V')) \;,$$
and in particular $\H(\P)=\H(\P,\P)$ if $\P$ is a partition of $V(G)$.

Lemma~\ref{lemma:defect} immediately implies the following bound on $\H(\P,\P') - \H(\{A\},\{A'\})$, where we recall that $\P$ is a partition of $A$ and $\P'$ is a partition of $A'$.
\begin{coro}\label{coro:strong-Potential}
If $d(A,A') \neq 0$,
$$\H(\P,\P') - \H(\{A\},\{A'\}) \ge
\frac12 d(A,A') \bigg(\sum_{\substack{V \in \P\\V' \in \P'}} \frac{|V||V'|}{|A||A'|} \s{\frac{d(V,V')}{d(A,A')}-1}\bigg)^2 \;.$$
\end{coro}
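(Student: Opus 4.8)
The plan is to derive Corollary~\ref{coro:strong-Potential} directly from Lemma~\ref{lemma:defect} by choosing the right index set and weights. First I would enumerate the pairs: let the pairs $(V,V')$ with $V\in\P$ and $V'\in\P'$ be indexed by $i=1,\ldots,N$ (so $N=|\P|\cdot|\P'|$), and for the $i$-th pair $(V,V')$ set $p_i = \frac{|V||V'|}{|A||A'|}$ and $d_i = d(V,V')$. The $p_i$ are nonnegative and sum to $1$, since $\sum_{V\in\P}|V|=|A|$ and $\sum_{V'\in\P'}|V'|=|A'|$, so $\sum_i p_i = \frac{1}{|A||A'|}\sum_{V,V'}|V||V'| = 1$.

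Next I would identify the weighted average $d := \sum_i p_i d_i$ with $d(A,A')$. This is the standard fact that density is the $|V||V'|$-weighted average of block densities: $\sum_{V,V'} \frac{|V||V'|}{|A||A'|} d(V,V') = \frac{1}{|A||A'|}\sum_{V,V'} e(V,V') = \frac{e(A,A')}{|A||A'|} = d(A,A')$, which is nonzero by hypothesis. Then the left-hand side of Lemma~\ref{lemma:defect} is exactly $\sum_i p_i H(d_i) - H(d) = \H(\P,\P') - H(d(A,A'))$, and since $\{A\}$ is the trivial partition of $A$ and $\{A'\}$ the trivial partition of $A'$, we have $\H(\{A\},\{A'\}) = H(d(A,A'))$ by definition of $\H$. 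So the left-hand side is precisely $\H(\P,\P') - \H(\{A\},\{A'\})$.

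Finally I would read off the right-hand side: Lemma~\ref{lemma:defect} gives the lower bound $\frac12 d\big(\sum_i p_i |d_i/d - 1|\big)^2$, and substituting back $d = d(A,A')$, $p_i = \frac{|V||V'|}{|A||A'|}$, $d_i = d(V,V')$ turns the inner sum into $\sum_{V\in\P, V'\in\P'} \frac{|V||V'|}{|A||A'|}\big|\frac{d(V,V')}{d(A,A')}-1\big|$, which is exactly the claimed expression. This completes the proof.

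There is essentially no obstacle here: the corollary is a pure unwinding of definitions applied to Lemma~\ref{lemma:defect}. The only minor point worth stating carefully is that the weighted-average identity for densities holds over \emph{ordered} pairs, matching the convention (noted after~(\ref{eq:potential})) that the sum defining $\H$ ranges over ordered pairs; once that bookkeeping is in place the substitution is immediate.
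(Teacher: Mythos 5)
Your proposal is correct and matches the paper's proof essentially verbatim: the paper also applies Lemma~\ref{lemma:defect} with $p_{(V,V')}=|V||V'|/|A||A'|$ and $d_{(V,V')}=d(V,V')$, identifying the weighted average $\sum p_{(V,V')}d_{(V,V')}$ with $d(A,A')$ and $\H(\{A\},\{A'\})$ with $H(d(A,A'))$. Your additional remark about the ordered-pair convention is a harmless bit of bookkeeping the paper leaves implicit.
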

\begin{proof}
Follows from Lemma~\ref{lemma:defect} by setting $p_{(V,V')}=|V||V'|/|A||A'|$ and $d_{(V,V')}=d(V,V')$ for each $(V,V') \in \P \times \P'$, using the fact that
$$\sum_{\substack{V \in \P\\V' \in \P'}} p_{(V,V')}d_{(V,V')} =
\sum_{\substack{V \in \P\\V' \in \P'}} e(V,V')/|A||A'| = d(A,A') \;.$$
\end{proof}

Throughout the rest of the paper we will use the following notation; if $\Q$ is a refinement of $\P$ and $V \in \P$ then $\Q|_V$ will denote the partition of $V$ that $\Q$ induces.
We have the following properties.

\begin{claim}\label{claim:refine}
If $\Q$ refines $\P$ and $\Q'$ refines $\P'$ then:
\begin{enumerate}
\item $\H(\Q,\Q') = \sum_{V \in \P, V' \in \P'} \frac{|V||V'|}{|A||A'|} \H(\Q|_{V},\Q'|_{V'})$.
\item $\H(\Q,\Q') \ge \H(\P,\P')$.
\end{enumerate}
\end{claim}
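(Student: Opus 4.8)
The plan is to prove both parts of Claim~\ref{claim:refine} by first establishing part~(1), which is a purely algebraic identity, and then deriving part~(2) from it using the convexity of $H$ packaged in Corollary~\ref{coro:strong-Potential} (or directly in Lemma~\ref{lemma:defect}).

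\emph{Part (1).} This is just a regrouping of the defining sum. By definition,
$$\H(\Q,\Q') = \sum_{\substack{W \in \Q\\ W' \in \Q'}} \frac{|W||W'|}{|A||A'|} H(d(W,W')) \;.$$
Since $\Q$ refines $\P$ and $\Q'$ refines $\P'$, every $W \in \Q$ lies in a unique $V \in \P$ and every $W' \in \Q'$ lies in a unique $V' \in \P'$; so I would partition the index set of the sum according to which $(V,V')$ the pair $(W,W')$ falls into, i.e.\ write it as $\sum_{V \in \P, V' \in \P'} \sum_{W \in \Q|_V,\, W' \in \Q'|_{V'}}$. Inside the inner sum I factor $\frac{|W||W'|}{|A||A'|} = \frac{|V||V'|}{|A||A'|}\cdot\frac{|W||W'|}{|V||V'|}$, pull $\frac{|V||V'|}{|A||A'|}$ out of the inner sum, and recognize the remaining inner sum as exactly $\H(\Q|_V,\Q'|_{V'})$ by its definition. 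This gives the identity; one should note it holds trivially (both sides zero in the relevant term) when $|V|=0$ or $|V'|=0$, which does not occur for genuine partition parts.

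\emph{Part (2).} Starting from part~(1), it suffices to show termwise that $\H(\Q|_V,\Q'|_{V'}) \ge H(d(V,V'))$ for each $V \in \P$, $V' \in \P'$, since then $\H(\Q,\Q') \ge \sum_{V,V'} \frac{|V||V'|}{|A||A'|} H(d(V,V')) = \H(\P,\P')$. Note $H(d(\{V\},\{V'\})) = H(d(V,V'))$ by definition of the two-argument $\H$ on trivial partitions. If $d(V,V') \neq 0$, this termwise inequality is immediate from Corollary~\ref{coro:strong-Potential} applied with $A \leftarrow V$, $A' \leftarrow V'$, $\P \leftarrow \Q|_V$, $\P' \leftarrow \Q'|_{V'}$, since the right-hand side there is a nonnegative quantity (it is $\frac12 d(V,V')$ times a square, and $d(V,V')\ge 0$). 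If $d(V,V')=0$ then $d(W,W')=0$ for all $W \subseteq V$, $W' \subseteq V'$, so both $\H(\Q|_V,\Q'|_{V'})$ and $H(d(V,V'))$ equal $0$ (using $0\ln 0 = 0$), and the inequality holds with equality. Summing over all $(V,V')$ completes the proof.

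\emph{Main obstacle.} There is no serious obstacle here — this is a bookkeeping lemma. The only points requiring a little care are (a) making the refinement index-regrouping in part~(1) precise, and (b) separating out the degenerate case $d(V,V')=0$ in part~(2), since Corollary~\ref{coro:strong-Potential} and Lemma~\ref{lemma:defect} both assume the relevant density is nonzero; I would handle that case by the direct observation that all densities below a zero-density pair vanish.
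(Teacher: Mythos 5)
Your proposal is correct and follows essentially the same route as the paper: part (1) by the same regrouping of the defining sum with the factor $\frac{|W||W'|}{|A||A'|} = \frac{|V||V'|}{|A||A'|}\cdot\frac{|W||W'|}{|V||V'|}$, and part (2) by the termwise inequality via Corollary~\ref{coro:strong-Potential} (the paper notes Jensen's inequality alone suffices, which also disposes of the $d(V,V')=0$ case you treat separately). No gaps.
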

\begin{proof}
For the first item, we have
\begin{align*}
\H(\Q,\Q') &= \sum_{\substack{V \in \P\\ V' \in \P'}}\, \sum_{\substack{U \in \Q|_{V}\\ U' \in \Q'|_{V'}}} \frac{\s{U}\s{U'}}{|A||A'|}H(d(U,U')) \\
&= \sum_{\substack{V \in \P\\ V' \in \P'}}\, \frac{|V||V'|}{|A||A'|}
\sum_{\substack{U \in \Q|_{V}\\ U' \in \Q'|_{V'}}} \frac{\s{U}\s{U'}}{|V||V'|} H(d(U,U'))
= \sum_{V \in \P, V' \in \P'} \frac{|V||V'|}{|A||A'|} \H(\Q|_{V},\Q'|_{V'}) \;.
\end{align*}
As for the second item, if follows from the first item that
\begin{align*}
\H(\Q,\Q') - \H(\P,\P')
&= \sum_{\substack{V \in \P\\ V' \in \P'}}\, \frac{|V||V'|}{|A||A'|} \Big( \H(\Q|_{V},\Q'|_{V'}) - \H(\{V\},\{V'\}) \Big) \\
&= \sum_{\substack{V \in \P\\ V' \in \P'}}\, \frac{|V||V'|}{|A||A'|}
\Bigg( \sum_{\substack{U \in \Q|_{V}\\ U' \in \Q'|_{V'}}} \frac{\s{U}\s{U'}}{|V||V'|} H(d(U,U')) - H(d(V,V')) \Bigg) \ge 0 \;,
\end{align*}
where the inequality is due to the fact that each inner sum is nonnegative by Corollary~\ref{coro:strong-Potential} (in fact, Jensen's inequality suffices).
\end{proof}

The following claim gives lower and upper bounds for the potential function, where we recall that $\P$ is a partition of $V(G)$.
\begin{claim}\label{claim:entropyBounds}
$d_G\ln(d_G) \le \H(\P) \le 0$.
%
\end{claim}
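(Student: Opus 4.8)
The claim asserts $d_G \ln(d_G) \le \H(\P) \le 0$ for any partition $\P$ of $V(G)$.

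\medskip

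The plan is to prove the two inequalities separately, both exploiting the specific shape of $H(x) = x\ln x$. For the upper bound $\H(\P) \le 0$, I would note that each density $d(V,V') \in [0,1]$, so $H(d(V,V')) = d(V,V')\ln d(V,V') \le 0$ termwise (with the convention $0\ln 0 = 0$). Since the coefficients $|V||V'|/n^2$ in the definition~(\ref{eq:potential}) are nonnegative, the entire sum is nonpositive. This direction is essentially immediate.

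\medskip

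For the lower bound $d_G \ln(d_G) \le \H(\P)$, the key observation is that $\{V(G)\}$ is the trivial partition, and $\H(\{V(G)\}) = H(d(V(G),V(G))) = H(d_G) = d_G\ln d_G$, since $d(V(G),V(G)) = e(V(G),V(G))/n^2 = 2|E|/n^2 = d_G$ by definition. Now, $\P$ refines $\{V(G)\}$, so by the monotonicity in the second item of Claim~\ref{claim:refine} (applied with $\Q = \Q' = \P$ and $\P$ replaced by the trivial partition $\{V(G)\}$), we get $\H(\P) = \H(\P,\P) \ge \H(\{V(G)\},\{V(G)\}) = d_G\ln d_G$. One small caveat: Claim~\ref{claim:refine} as stated is for partitions of sets $A, A'$; here $A = A' = V(G)$ and the trivial partition has a single part, so the hypothesis is trivially satisfied. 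If $d_G = 0$ the lower bound reads $0 \le \H(\P)$, which is consistent with the upper bound forcing $\H(\P) = 0$; alternatively the convention $0\ln 0 = 0$ handles it directly.

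\medskip

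I do not anticipate a genuine obstacle here — both bounds follow from basic properties already established. The only point requiring a modicum of care is making sure the monotonicity lemma is invoked with the correct trivial partition and that the convention $0\ln 0 = 0$ is used consistently when densities vanish. The lower bound could alternatively be derived directly from Corollary~\ref{coro:strong-Potential} (or even plain Jensen's inequality for the convex function $H$), but routing through Claim~\ref{claim:refine} is cleanest since that machinery is already in place.
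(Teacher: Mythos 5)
Your proof is correct and matches the paper's: the upper bound is the same termwise observation that $H(x)\le 0$ on $[0,1]$, and your lower bound via Claim~\ref{claim:refine}(ii) applied to the trivial partition $\{V(G)\}$ is just a packaged form of the paper's argument, which applies Jensen's inequality to the convex function $H$ directly (as you yourself note in your alternative). No gap; the only cosmetic difference is that the paper inlines Jensen rather than invoking the refinement-monotonicity claim.
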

\begin{proof}
The upper bound follows immediately from the fact that $H(x) \le 0$ for every $0 \le x \le 1$.
The lower bound $\H(\P)\ge H(d_G)$ follows from Jensen's inequality; indeed,
$$\H(\P) = \sum_{V,V' \in \P}\, \frac{|V||V'|}{n^2} H(d(V,V'))
\ge H\bigg(\sum_{\substack{V,V' \in \P}}\, \frac{|V||V'|}{n^2} d(V,V') \bigg)
= H\Big( \frac{2|E(G)|}{n^2} \Big) = H(d_G) \;.$$
\end{proof}

If $\Q$ refines $\P$ we write
$$\D(\Q,\P)=\frac12\sum_{i,j=1}^k \, \sum_{\substack{U \in \Q|_{V_i}\\ U' \in \Q|_{V_j}}} |U||U'| |d(U,U')-d(V_i,V_j)| \;.$$
We deduce the following relation between the $\l_1$-distance and the ``entropy-distance'' of partitions.

\begin{lemma}\label{lemma:potential-vs-l1}
Suppose $G$ has density $p$ and $\Q \preceq \P$. If $\D(\Q,\P) \ge x pn^2$ then $H(\Q)-H(\P) \ge 2x^2 p$.
\end{lemma}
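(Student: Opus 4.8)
The statement relates the $\ell_1$-distance $\D(\Q,\P)$ between a partition and its refinement to the entropy gain $\H(\Q)-\H(\P)$, so the natural route is to invoke Corollary~\ref{coro:strong-Potential} pairwise over the parts of $\P$ and then combine via the Cauchy--Schwarz inequality. First I would expand $\H(\Q)-\H(\P)$ using the second item of Claim~\ref{claim:refine}, writing it as a convex combination
\[
\H(\Q)-\H(\P) = \sum_{V_i,V_j \in \P} \frac{|V_i||V_j|}{n^2}\Big(\H(\Q|_{V_i},\Q|_{V_j}) - \H(\{V_i\},\{V_j\})\Big) \;,
\]
and for each term with $d(V_i,V_j)\neq 0$ apply Corollary~\ref{coro:strong-Potential} to bound the $(i,j)$-summand from below by $\tfrac12 d(V_i,V_j)\big(\sum_{U,U'} \tfrac{|U||U'|}{|V_i||V_j|}\,|d(U,U')/d(V_i,V_j)-1|\big)^2$. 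The key simplification is that $d(V_i,V_j)\cdot \sum_{U,U'}\tfrac{|U||U'|}{|V_i||V_j|}\,|d(U,U')/d(V_i,V_j)-1|$ equals $\tfrac{1}{|V_i||V_j|}\sum_{U,U'}|U||U'|\,|d(U,U')-d(V_i,V_j)|$, i.e.\ (twice) the contribution of the pair $(V_i,V_j)$ to $\D(\Q,\P)/n^2$ — call it $2x_{ij}p$ where $\sum_{ij}\tfrac{|V_i||V_j|}{n^2}x_{ij}p$ relates back to $\D(\Q,\P)/n^2$. Thus each summand is bounded below by $\tfrac12 \cdot \tfrac{1}{d(V_i,V_j)}\cdot (2x_{ij}p)^2$ after rearranging.

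The cleanest way to assemble these is to write, for each pair $(i,j)$, $\delta_{ij} := \tfrac{1}{n^2}\sum_{U\in\Q|_{V_i},\,U'\in\Q|_{V_j}}|U||U'|\,|d(U,U')-d(V_i,V_j)|$, so that $\D(\Q,\P) = \tfrac12 n^2 \sum_{ij}\delta_{ij}$ and the hypothesis says $\sum_{ij}\delta_{ij}\ge 2xp$. Corollary~\ref{coro:strong-Potential} applied to the $(i,j)$ pair (using $p_{(U,U')}=|U||U'|/|V_i||V_j|$) gives, after multiplying through by $|V_i||V_j|/n^2$ and using $d(V_i,V_j)\le 1$,
\[
\frac{|V_i||V_j|}{n^2}\Big(\H(\Q|_{V_i},\Q|_{V_j})-\H(\{V_i\},\{V_j\})\Big) \;\ge\; \frac{1}{2}\cdot\frac{(\,|V_i||V_j|/n^2\,)^2}{d(V_i,V_j)\,|V_i||V_j|/n^2}\,\delta_{ij}^2 \;=\; \frac{\delta_{ij}^2}{2\,d(V_i,V_j)\,|V_i||V_j|/n^2} \;\ge\; \frac{\delta_{ij}^2}{2} \cdot \frac{n^2}{|V_i||V_j|} \cdot \frac{1}{d(V_i,V_j)}\cdot\frac{|V_i||V_j|}{n^2}\,,
\]
which I would rather reorganize so that the bound reads $\ge \tfrac12 \delta_{ij}^2 / (\,d(V_i,V_j)\cdot |V_i||V_j|/n^2\,)$; then, since $d(V_i,V_j)\le 1$ and $|V_i||V_j|/n^2\le 1$ would go the wrong way, the honest move is to bound $d(V_i,V_j)\cdot |V_i||V_j|/n^2$ above by $1$ so the denominator is $\le 1$. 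Summing over all $(i,j)$ with $d(V_i,V_j)\neq 0$ (pairs with $d(V_i,V_j)=0$ contribute $\delta_{ij}=0$ anyway) and applying Cauchy--Schwarz in the form $\sum_{ij}\delta_{ij}^2 \ge (\sum_{ij}\delta_{ij})^2 / N$ where $N\le (\,\text{number of pairs}\,)$ — but here I must be careful, since naive Cauchy--Schwarz introduces a factor of the number of pairs, which is too lossy. The correct device is instead to keep the weights: by Cauchy--Schwarz, $\sum_{ij}\tfrac{\delta_{ij}^2}{w_{ij}} \ge (\sum_{ij}\delta_{ij})^2 / \sum_{ij} w_{ij}$ with $w_{ij} = d(V_i,V_j)|V_i||V_j|/n^2$, and $\sum_{ij} w_{ij} = \sum_{ij} e(V_i,V_j)/n^2 = d_G = p$. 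Hence $\H(\Q)-\H(\P) \ge \tfrac12 (\sum_{ij}\delta_{ij})^2/p \ge \tfrac12 (2xp)^2/p = 2x^2 p$, which is exactly the claim.

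\textbf{Main obstacle.} The only delicate point is getting the right normalization in the application of Cauchy--Schwarz: one must use the weighted form $\sum \delta_{ij}^2/w_{ij}\ge(\sum\delta_{ij})^2/\sum w_{ij}$ with the weights $w_{ij}=d(V_i,V_j)|V_i||V_j|/n^2$ coming naturally out of Corollary~\ref{coro:strong-Potential}, and then recognize that $\sum_{ij} w_{ij}=d_G=p$ — this is precisely what makes the density $p$ appear with the right power and prevents any lossy dependence on the number of parts. A secondary bookkeeping point is matching the factor of $2$ from the definition of $\D(\Q,\P)$ (which has a $\tfrac12$ out front and sums over ordered pairs) with the factor $2$ in the target $2x^2p$; tracking the ordered-pair convention consistently (as in the definition of $\H$ and of $\D$) makes this automatic. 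I expect the whole argument to be about half a page.
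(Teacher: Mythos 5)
Your proposal is correct and is essentially the paper's own argument: the paper likewise decomposes $\H(\Q)-\H(\P)$ over pairs of $\P$ via Claim~\ref{claim:refine} (its first item, not the second as you cite), applies Corollary~\ref{coro:strong-Potential} to each pair, and then combines with Jensen's inequality using the weights $|V_i||V_j|d(V_i,V_j)/pn^2$ summing to $1$, which is exactly your weighted Cauchy--Schwarz step $\sum_{i,j}\delta_{ij}^2/w_{ij}\ge(\sum_{i,j}\delta_{ij})^2/\sum_{i,j}w_{ij}$ with $\sum_{i,j}w_{ij}=p$. Only note that the intermediate display in your write-up has the factor $|V_i||V_j|/n^2$ misplaced (the correct per-pair bound is $\delta_{ij}^2/(2w_{ij})$ with $w_{ij}=d(V_i,V_j)|V_i||V_j|/n^2$, as you indeed use afterwards), and the brief detour bounding $w_{ij}\le 1$ should simply be dropped.
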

\begin{proof}
We have
\begin{align*}
\frac{\H(\Q) - \H(\P)}{p}
&= \sum_{i,j} \frac{\s{V_i}\s{V_j}}{pn^2} \Big(\H(\Q|_{V_i},\Q|_{V_j}) - \H(\{V_i\},\{V_j\}) \Big) \\
&\ge \frac12 \sum_{i,j} \frac{\s{V_i}\s{V_j}}{pn^2} \cdot d(V_i,V_j) \Bigg(
\sum_{\substack{U \in \Q|_{V_i}\\ U' \in \Q|_{V_j}}} \frac{\s{U}\s{U'}}{\s{V_i}\s{V_j}} \s{\frac{d(U,U')}{d(V_i,V_j)}-1} \Bigg)^2 \\
&\ge \frac12 \Bigg( \sum_{i,j} \frac{\s{V_i}\s{V_j}}{pn^2} \cdot d(V_i,V_j)
\sum_{\substack{U \in \Q|_{V_i}\\ U' \in \Q|_{V_j}}} \frac{\s{U}\s{U'}}{\s{V_i}\s{V_j}} \s{\frac{d(U,U')}{d(V_i,V_j)}-1} \Bigg)^2 \\
&= \frac12 \Bigg( \sum_{i,j} \frac{1}{pn^2}
\sum_{\substack{U \in \Q|_{V_i}\\ U' \in \Q|_{V_j}}} |U||U'| |d(U,U')-d(V_i,V_j)| \Bigg)^2 = 2 \bigg( \frac{\D(\Q,\P)}{pn^2} \bigg)^2 \;,
\end{align*}
where all summations are over the ordered pairs $(i,j)$ satisfying
$d(V_i,V_j) > 0$,
in the first line we used the first item of Claim~\ref{claim:refine}, in the second we used Corollary~\ref{coro:strong-Potential},
and in the third we used Jensen's inequality together with the fact that
$\sum_{i,j} |V_i||V_j|d(V_i,V_j)/p|V|^2 = 2|E|/p|V|^2 = 1$.
This completes the proof.
\end{proof}

\subsection{The iterative argument}

Here we show how to find a vertex partition $\P$ that has a refinement which is both weak $\e$-regular with $\e$ that decreases with $\P$ and, simultaneously, close to $\P$ in terms of the entropy potential.
The proof follows by iteratively finding better and better weak regular partitions, similarly to the argument that Tao~\cite{Tao06} used in order to provide an alternative proof for Szemer\'edi's regularity lemma.

\begin{lemma}\label{lemma:Tao}
Let $\a>0$, $s \in \N$,
$g:\N\to(0,1)$ a decreasing function.
For every graph of density $p$ and vertex equipartition $\P_0$ of order $s$, there are equipartitions
$\Q\preceq\P$ refining $\P_0$ that satisfy:
\begin{itemize}
\item $\Q$ is weak $g(\s{\P})$-regular.
\item $\H(\Q)-\H(\P) < \a \cdot p\ln(1/p)$.
\item $\s{\P} \le E^{(\floor{1/\a})}(s)$ where $E(x)=x \cdot 2^{\poly(1/g(x))}$.
\end{itemize}
\end{lemma}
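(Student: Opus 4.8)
The plan is to construct the sequence of partitions iteratively, using the weak regularity lemma (Theorem~\ref{theo:WRL}) together with the potential function machinery developed above. Start with $\P^{(0)} = \P_0$. Given a partition $\P^{(i)}$ refining $\P_0$, apply Theorem~\ref{theo:WRL} with parameter $g(\s{\P^{(i)}})$ to obtain a weak $g(\s{\P^{(i)}})$-regular equipartition $\Q^{(i)}$ refining $\P^{(i)}$ of order at most $\s{\P^{(i)}} \cdot 2^{\poly(1/g(\s{\P^{(i)}}))} = E(\s{\P^{(i)}})$. Now check the "potential jump" condition: if $\H(\Q^{(i)}) - \H(\P^{(i)}) < \a \cdot p\ln(1/p)$, we are done---set $\P = \P^{(i)}$ and $\Q = \Q^{(i)}$, and all three bullets hold (the order bound follows since we have iterated $E$ at most $\floor{1/\a}$ times, see below). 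Otherwise, $\H(\Q^{(i)}) - \H(\P^{(i)}) \ge \a \cdot p\ln(1/p)$; in this case set $\P^{(i+1)} = \Q^{(i)}$ and repeat.

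\textbf{Termination and the order bound.} The key observation is that the process must stop within $\floor{1/\a}$ steps. Indeed, by Claim~\ref{claim:refine}(2) the potential is monotone under refinement, and since each $\P^{(i+1)}$ refines $\P^{(i)}$, each "bad" step increases the potential by at least $\a \cdot p\ln(1/p)$. But by Claim~\ref{claim:entropyBounds} we have $p\ln p \le \H(\P^{(i)}) \le 0$ for every $i$ (noting the graph density is $p$, so $d_G = p$), hence the total possible increase in potential is at most $-p\ln p = p\ln(1/p)$. Therefore the number of bad steps is at most $(p\ln(1/p)) / (\a \cdot p\ln(1/p)) = 1/\a$, so the process terminates after at most $\floor{1/\a}$ refinement steps (plus one final "good" step). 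Consequently $\s{\P}$ is obtained from $s = \s{\P_0}$ by applying the map $E$ at most $\floor{1/\a}$ times, giving $\s{\P} \le E^{(\floor{1/\a})}(s)$ as claimed. Here one should be slightly careful that $E$ is increasing (so iterating it and plugging in the worst-case order bound at each stage is valid), which holds since $g$ is decreasing, making $2^{\poly(1/g(x))}$ a non-decreasing factor.

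\textbf{Verifying the conclusion.} When the process stops at step $i$, we have $\Q = \Q^{(i)} \preceq \P^{(i)} = \P$, both refine $\P_0$, both are equipartitions, $\Q$ is weak $g(\s{\P})$-regular by construction, and $\H(\Q) - \H(\P) < \a \cdot p\ln(1/p)$ precisely because we are in the "good" case. The only mild technical point is keeping everything equitable: the weak regularity lemma as stated produces equipartitions refining the given one, so this is preserved throughout, and the final $\P$ is simply one of the $\P^{(i)}$, each of which is an equipartition refining $\P_0$.

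\textbf{Main obstacle.} The argument is essentially a clean potential-increment scheme, so there is no deep obstacle; the one point requiring care is the bookkeeping of the order bound---ensuring that the composition $E^{(\floor{1/\a})}$ correctly dominates the actual order, which relies on $E$ being monotone increasing, a consequence of $g$ being decreasing. One should also double-check the edge case where the process makes exactly $\floor{1/\a}$ bad steps and then one good step: the order after the bad steps is at most $E^{(\floor{1/\a})}(s)$, and the final good step does not further increase $\s{\P}$ since $\P$ is taken to be $\P^{(i)}$ (the coarser partition), not $\Q^{(i)}$---so the stated bound on $\s{\P}$ is unaffected.
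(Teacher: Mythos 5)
Your proposal is correct and follows essentially the same argument as the paper: iterate the strengthened weak regularity lemma (Theorem~\ref{theo:WRL}) with parameter $g$ of the current order, stop when the entropy increment drops below $\a\, p\ln(1/p)$, bound the number of iterations by $\floor{1/\a}$ via Claim~\ref{claim:entropyBounds}, and bound the final order by iterating the increasing function $E$. The only difference is presentational (an explicit stopping-time loop versus the paper's a priori sequence $\P_0 \succeq \cdots \succeq \P_{r+1}$), so there is nothing of substance to add.
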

\begin{proof}
We construct $r+1$ equitable refinements $\P_0 \succeq \P_1 \succeq \cdots \succeq \P_r \succeq \P_{r+1}$
by letting
$\P_i$ ($i \ge 1$) be the weak $\e$-regular refinement of $\P_{i-1}$ obtained by applying the weak regularity lemma in Theorem~\ref{theo:WRL} with $\e=g(\s{\P_{i-1}})$.
We stop once the potential difference between $\P_i$ and $\P_{i-1}$ drops below $\alpha p\ln(1/p)$. That is, $r$ is chosen so that
$$\forall i\le r: \,\, \H(\P_i)-\H(\P_{i-1}) \ge \a p\ln(1/p) \quad\text{ and }\quad \H(\P_{r+1})-\H(\P_r) < \a p\ln(1/p) \;.$$
We will show that the equipartitions $\P:=\P_{r}$ and $\Q:=\P_{r+1}$ satisfy the requirements in the statement. Note that, by construction, $\Q$ is weak $g(\s{\P})$-regular and $\H(\Q)-\H(\P) < \a p\ln(1/p)$.
Thus, it remains to bound $\s{\P}=\s{\P_r}$.

First, we claim that $r\le 1/\a$. This follows by bounding the difference $\H(\P_r)-\H(\P_0)$; indeed,
$$r \a p\ln(1/p) \le \sum_{i=1}^r \Big(\H(\P_i)-\H(\P_{i-1})\Big) = \H(\P_r)-\H(\P_0) \le p \ln(1/p) \;,$$
where the lower bound follows by construction and the upper bound follows from Claim~\ref{claim:entropyBounds}.
Next, recall that by Theorem~\ref{theo:WRL} we have $|\P_{i}| \le E(|\P_{i-1}|)$ with $E(x) = x \cdot 2^{\poly(1/g(x))}$.
We claim that $\s{\P_i} \le E^{(i)}(s)$ for every $0\le i \le r$, which we prove by induction on $i$.
For the base case $i=0$ we trivially have $\s{\P_0} = E^{(0)}(s)$,
and for the induction step we have 
$$\s{\P_{i+1}} \le 
E(\s{\P_i})
\le E(E^{(i)}(s)) = E^{(i+1)}(s) \;,$$
where the last inequality follows from the induction hypothesis and the fact that $E$ is an increasing function, which proves our claim.
As $r\le \floor{1/\a}$ we conclude
$\s{\P_r} \le E^{(r)}(s) \le E^{(\floor{1/\a})}(s)$,
where the last inequality follows from fact that $E$ is increasing and $E(x) \ge x$.
This completes the proof.
\end{proof}

\subsection{Proof of Theorem~\ref{theo:SRAL}}

Here we combine the results from this and the previous section in order to prove Theorem~\ref{theo:SRAL}.
In fact, we will prove the following stronger result. We say that a vertex partition of order $k$ is \emph{$f$-regular}, where $f:\N\to(0,1)$, if \emph{all} distinct pairs are $f(k)$-regular.

\begin{theo}
\label{theo:SRAL2}
Let $\d>0$, $s \in \N$ and $f:\N\to(0,1)$ be a decreasing function.
For any graph $G$ of density $p$ and any initial vertex equipartition $\P_0$ of order $s$,
one can add/remove at most $\d|E(G)|$ edges to obtain
a graph that has an $f$-regular equipartition refining $\P_0$ of order at most $F^{(h)}(s)$, where $F(x) = 2^{x/f(x)}$ and $h = O(\log\frac{1}{p}/\d^2)$.
\end{theo}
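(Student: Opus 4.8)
The plan is to combine Lemma~\ref{lemma:Tao} (iterated weak regularity with bounded entropy defect) with the perturbation lemma, Lemma~\ref{lemma:ConlonFox2}, and to convert the $\ell_1$-closeness of the two partitions $\Q \preceq \P$ into a bound on the number of edge modifications via Lemma~\ref{lemma:potential-vs-l1}. Concretely, I would first invoke Lemma~\ref{lemma:Tao} with a suitably chosen decreasing function $g$ (taking $g$ essentially equal to the target $f$, perhaps with a small constant factor to absorb the losses from the perturbation step) and with $\a$ a constant multiple of $\d^2$. This produces equipartitions $\Q \preceq \P$, both refining $\P_0$, such that $\Q$ is weak $g(\s\P)$-regular, $\H(\Q) - \H(\P) < \a p \ln(1/p)$, and $\s\P \le E^{(\floor{1/\a})}(s)$ with $E(x) = x \cdot 2^{\poly(1/g(x))}$. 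Since $\a = \Theta(\d^2)$, the number of iterations is $\floor{1/\a} = O(1/\d^2)$, which after also paying a $\log(1/p)$ factor (see below) will give the claimed height $h = O(\log(1/p)/\d^2)$; the function $F(x) = 2^{x/f(x)}$ dominates $E$ up to a constant change in the iteration count, so composing $F$ the stated number of times suffices.

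Next I would feed $\Q \preceq \P$ into the perturbation step. The key point is that $\Q$ being weak $g(\s\P)$-regular means that for \emph{every} pair $(V_i, V_j) \in \P \times \P$, the induced partition $\Q|_{V_i} \cup \Q|_{V_j}$ is weak $g(\s\P)$-regular on the bipartite graph between $V_i$ and $V_j$ — this is exactly the hypothesis of Lemma~\ref{lemma:ConlonFox2}. Applying that lemma to each pair separately (with $\e$ there set to $g(\s\P)$, so that the output is $2g(\s\P)$-regular, which we arrange to be $\le f(\s\P)$) lets us replace the bipartite graph $G[V_i, V_j]$ by one in which the density is constant across the refinement $\Q|_{V_i} \times \Q|_{V_j}$ and which is $f$-regular, at the cost of modifying at most $\Delta_{ij} = \sum_{U \in \Q|_{V_i}, U' \in \Q|_{V_j}} |d(U,U') - d(V_i,V_j)| \, |U||U'|$ edges inside that pair. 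One must also handle the parts $V_i$ themselves (the diagonal pairs), the $|V_i| \ge 8/\e^4$ size requirement (harmless: $n$ is large, or else the statement is trivial), and the fact that after all the modifications $\P$ itself becomes an $f$-regular equipartition of the new graph refining $\P_0$.

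The heart of the calculation is bounding $\sum_{i,j} \Delta_{ij}$. By definition this is exactly $2\,\D(\Q,\P)$, and Lemma~\ref{lemma:potential-vs-l1} gives the contrapositive control: if $\D(\Q,\P) \ge x p n^2$ then $\H(\Q) - \H(\P) \ge 2x^2 p$. Combined with $\H(\Q) - \H(\P) < \a p \ln(1/p)$ from Lemma~\ref{lemma:Tao}, this forces $x < \sqrt{\a \ln(1/p)/2}$, i.e. $\D(\Q,\P) < \sqrt{\a \ln(1/p)/2}\; p n^2$. Thus the total number of edge modifications is at most $2\D(\Q,\P) < \sqrt{2\a \ln(1/p)}\; p n^2 \le \sqrt{2\a \ln(1/p)} \cdot 2 |E(G)|$ since $|E(G)| = p n^2 / 2$. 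To make this at most $\d |E(G)|$ we need $\sqrt{2\a \ln(1/p)} \le \d/2$, i.e. $\a \le \d^2 / (8\ln(1/p))$; choosing $\a$ of this order gives iteration count $1/\a = O(\ln(1/p)/\d^2)$, hence $h = O(\log(1/p)/\d^2)$ compositions of $F$, as claimed.

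The main obstacle I anticipate is the bookkeeping needed to run the perturbation lemma simultaneously across all pairs while keeping the final partition genuinely $f$-regular \emph{as a partition} (not merely pairwise), and to reconcile the "weak regular with parameter $g(\s\P)$" guarantee for $\Q$ with the per-pair hypothesis~(\ref{eq:WR-bip}) of Lemma~\ref{lemma:ConlonFox2} — one needs to check that restricting the weak-regularity test sets $S, T$ to lie inside a single pair $(V_i, V_j)$ is legitimate, which it is because such $S, T$ are admissible test sets for the global weak-regularity condition (up to adjusting $\e$ by the factor $\s{V_i}/\s V \approx 1/\s\P$, which is why $g$ must be chosen somewhat smaller than $f$, e.g. $g(k) = f(k)/(4k)$ or similar). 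Threading the exact constants so that $2g(\s\P) \le f(\s\P)$ and the size hypothesis $\s{V_i} \ge 8/g(\s\P)^4$ both hold, while the iteration bound $E^{(1/\a)}(s) \le F^{(h)}(s)$ goes through, is the part requiring care but no new ideas.
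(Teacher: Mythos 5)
Your proposal is correct and follows essentially the same route as the paper: apply Lemma~\ref{lemma:Tao} with $g(k)\approx f(k)/k$ and $\a=\Theta(\d^2/\log(1/p))$, restrict the weak regularity of $\Q$ to each pair $(V_i,V_j)$ (the paper's Claim~\ref{claim:inducedWR}, which is exactly the factor-$k$ adjustment you flag), perturb each pair via Lemma~\ref{lemma:ConlonFox2}, and control the total number of modifications by $\D(\Q,\P)$ through Lemma~\ref{lemma:potential-vs-l1}. The only deviations are harmless constant-factor ones (e.g.\ bounding the modifications by $2\D(\Q,\P)$ rather than $\D(\Q,\P)$), which you compensate for in the choice of $\a$.
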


Theorem~\ref{theo:SRAL} indeed follows from~\ref{theo:SRAL2} by taking $s=1/\e$ and $f(x)=\e$, in which case the resulting equipartition is $\e$-regular (as the fraction of irregular pairs is at most $1/s = \e)$
and has order at most $\twr_{1/\epsilon}(O(\log\frac{1}{p}/\delta^2))$, as required.
Before proving Theorem~\ref{theo:SRAL2} we first isolate a simple observation.
We will slightly simplify the proofs by assuming, as we may, that in an equipartition all parts are of exactly the same size.

\begin{claim}\label{claim:inducedWR}
For a graph $G=(V,E)$, let $\P=\{V_1,\ldots,V_k\}$ be an equipartition of $V$ and let $\Q$ be a weak $\e$-regular partition that refines $\P$.
For every induced bipartite graph $G[V_a,V_b]$ with $a \neq b$, the partition $\Q|_{V_a} \cup \Q|_{V_b}$ is weak $\e k$-regular (in the sense of Lemma~\ref{lemma:ConlonFox2}).
\end{claim}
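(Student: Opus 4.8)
The plan is to reduce the weak $\e k$-regularity of the induced bipartite partition $\Q|_{V_a}\cup\Q|_{V_b}$ directly to the weak $\e$-regularity of $\Q$ by choosing an appropriate test pair for the latter. Fix disjoint sets $S\sub V_a$ and $T\sub V_b$ with $\s{S}\ge \e k\,\s{V_a}$ and $\s{T}\ge \e k\,\s{V_b}$. Since the parts of $\P$ are equitable (and, as noted, we may assume all exactly of size $n/k$), we have $\s{V_a}=\s{V_b}=\s{V}/k$, hence $\s{S},\s{T}\ge \e k\cdot \s{V}/k = \e\s{V}$. Thus $S,T$ is a legal test pair for the weak $\e$-regular partition $\Q$ of $V$. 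The only subtlety is that the inequality in Definition~\ref{dfn:WRp} for $\Q$ sums over \emph{all} ordered pairs $(U,U')$ of parts of $\Q$, whereas we want only the cross terms with $U\in\Q|_{V_a}$ and $U'\in\Q|_{V_b}$.

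First I would write out what Definition~\ref{dfn:WRp} gives us for the pair $(S,T)$ tested against $\Q$: with $S_U=S\cap U$, $T_{U'}=T\cap U'$ over parts $U,U'\in\Q$,
\[
\sum_{U,U'\in\Q} \frac{\s{S_U}\s{T_{U'}}}{\s{S}\s{T}}\,\s{d(S_U,T_{U'})-d(U,U')}\le \e .
\]
Now because $S\sub V_a$ and $T\sub V_b$ with $a\neq b$, for any part $U\in\Q$ not contained in $V_a$ we have $S_U=\emptyset$ (since $\Q$ refines $\P$, $U$ lies inside a single part of $\P$, so $U\cap V_a=\emptyset$ unless $U\sub V_a$), and similarly $T_{U'}=\emptyset$ unless $U'\sub V_b$. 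Hence every term with $\s{S_U}\s{T_{U'}}\neq 0$ already has $U\in\Q|_{V_a}$ and $U'\in\Q|_{V_b}$, so the full sum over $\Q\times\Q$ equals the sum over $\Q|_{V_a}\times\Q|_{V_b}$, and the displayed bound is exactly the weak $\e$-regularity inequality for $\Q|_{V_a}\cup\Q|_{V_b}$ applied to the pair $(S,T)$ — which, since $\e\le \e k$, is even stronger than weak $\e k$-regularity. As $S,T$ were arbitrary legal test sets for the bipartite partition (those with $\s{S}\ge \e k\s{V_a}$, $\s{T}\ge \e k\s{V_b}$), this proves the claim.

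There is no real obstacle here; the one point requiring a line of care is the bookkeeping with normalizations — in Definition~\ref{dfn:WRp} the weights are $\s{S_U}\s{T_{U'}}/\s{S}\s{T}$ with $S\sub V$, while in the bipartite statement of Lemma~\ref{lemma:ConlonFox2} the weights are $\s{S_i}\s{T_j}/\s{S}\s{T}$ with $S\sub A=V_a$, $T\sub B=V_b$; but these coincide once we observe $S,T$ live entirely inside $V_a,V_b$, so the denominators $\s{S},\s{T}$ are the same in both formulations. I would also remark explicitly that the threshold for legal test sets relaxes correctly: a test pair for the bipartite partition at parameter $\e k$ becomes, after the size computation above, a test pair for $\Q$ at parameter $\e$, which is the direction we need.
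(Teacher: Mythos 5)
Your proposal is correct and follows essentially the same route as the paper: test the pair $(S,T)$ against the weak $\e$-regularity of $\Q$ after noting $\s{S},\s{T}\ge\e\s{V}$, and observe that all terms outside $\Q|_{V_a}\times\Q|_{V_b}$ vanish, so the full sum reduces to the bipartite one with bound $\e\le\e k$. The normalization remark and the automatic disjointness of $S,T$ (since $a\neq b$) are fine and match the paper's implicit treatment.
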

\begin{proof}
Let $S \sub V_a$, $T \sub V_b$ with $\s{S} \ge \e k\s{V_a}$ and $\s{T} \ge \e k\s{V_b}$. As $\P$ is an equipartition, this means that $\s{S},\s{T} \ge \e\s{V}$. Thus, by the weak $\e$-regularity of $\Q$ (recall Definition~\ref{dfn:WRp}),
$$\sum_{i,j=1}^k \sum_{\substack{U \in \Q|_{V_i}\\ U' \in \Q|_{V_j}}} \frac{\s{S \cap U}\s{T \cap U'}}{\s{S}\s{T}} \s{d(S \cap U,T \cap U')-d(U,U')} \le \e \;.$$
Since $\s{S \cap V_i} = 0$ for any $i \neq a$ and $\s{T \cap V_j} = 0$ for any $j \neq b$, the above reduces to
$$\sum_{\substack{U \in \Q|_{V_a}\\ U' \in \Q|_{V_b}}} \frac{\s{S \cap U}\s{T \cap U'}}{\s{S}\s{T}} \s{d(S \cap U,T \cap U')-d(U,U')} \le \e \le \e k \;,$$
which is what we needed to prove.
\end{proof}

\begin{proof}[Proof of Theorem~\ref{theo:SRAL2}]
Let $G=(V,E)$ be a graph of density $p$, and let $\P_0$ be the given equipartition of order $s$.
We apply Lemma~\ref{lemma:Tao} on $G$ and $\P_0$ with parameters
$$\a=\d^2/2\ln(1/p),\quad g(x)=f(x)/2x \;,$$
where we note, since $f$ is decreasing, that $g$ is decreasing as well, as required by Lemma~\ref{lemma:Tao}.
Let $\Q\preceq\P \preceq \P_0$ be the obtained equipartitions, and put $k=\s{\P}$. This means that:
\begin{enumerate}
\item $\Q$ is weak $g(k)$-regular, 
\item $\H(\Q)-\H(\P) \le p\d^2/2$, and
\item $\s{\P} \le E^{(\lfloor 2\ln(1/p)/\d^2 \rfloor)}(s)$ where
$E(x)= 2^{\poly(x/f(x))}$.
\end{enumerate}
We will show that one can modify at most $\d\s{E}$ edges of $G$ so as to make $\P$ an $f$-regular partition, provided $\s{V}$ is sufficiently large, which would complete the proof by Item~(iii).

Write $\P=\{V_1,\ldots,V_k\}$.
For each $i<j$ we apply Lemma~\ref{lemma:ConlonFox2} on the induced bipartite subgraph $G[V_i,V_j]$ with the partition $\Q|_{V_i} \cup \Q|_{V_j}$ and $\e=f(k)/2$, which we claim we may. To see this, note that, since $\Q$ is weak $f(k)/2k$-regular by Item~(i), the partition $\Q|_{V_i} \cup \Q|_{V_j}$ of $G[V_i,V_j]$ is indeed weak $\e$-regular by Claim~\ref{claim:inducedWR};
moreover, we may assume 
$\s{V_i},\s{V_j} \ge 8/f^4(k)$ as required by Lemma~\ref{lemma:ConlonFox2}, since otherwise $\s{V} \le O(k/f^4(k))$ is at most the bound in the statement of Theorem~\ref{theo:SRAL2}, so we may instead take the partition of $V$ into parts of size one.
Thus, we transform each bipartite subgraph as above into an $f(k)$-regular graph by modifying some of its edges.
It therefore follows that for the modified graph, the partition $\P$ is $f$-regular.
By Lemma~\ref{lemma:ConlonFox2}, the total number of edge modifications thus made is at most
$$\D(\Q,\P) = \frac12\sum_{i,j=1}^k \, \sum_{\substack{U \in \Q|_{V_i}\\ U' \in \Q|_{V_j}}} |U||U'| |d(U,U')-d(V_i,V_j)| \;.$$
Since $\H(\Q)-\H(\P) \le 2p(\d/2)^2$, it follows from Lemma~\ref{lemma:potential-vs-l1} that $\D(\Q,\P) \le (\d/2) p\s{V}^2 = \d\s{E}$.
This completes the proof.
\end{proof}

\section{The Removal Lemma via SRAL}\label{sec:removal}

In this section we prove Theorem~\ref{theo:RemRed}.
For the proof we will need a counting lemma
that corresponds to SRAL.
Call $G$ an \emph{$(\e,p)$-graph} if $G$ is multipartite and the bipartite graph between any pair of classes is either empty or $\e$-regular of density at least $p$.
The following ``approximate counting lemma'' shows that the usual counting lemma 
approximately holds for any graph that is sufficiently dense in an $(\e,p)$-graph.

\begin{lemma}\label{lemma:appCounting}
Let $H$ be an $h$-vertex graph with $m$ edges,
let $G'$ be a $k$-partite $(\e,p)$-graph on $(U_1,\ldots,U_k)$ with $|U_i|=n$,  
and let $G$ be a graph on $V(G')$ such that
for every $i \neq j$ with $d_{G'}(U_i,U_j) > 0$, $G[U_i,U_j]$ is $\d$-close to $G'[U_i,U_j]$.
Suppose $\d \le 1/2m$,
$\e \le (p^{m+1}/32h^4)^2$
and $n \ge 4h^{h+3}/p^m$.
If $G'$ contains a copy of $H$ then the number of copies of $H$ in $G$ is at least
$$\frac{1-\d m}{2} \cdot p^m n^h \;.$$
\end{lemma}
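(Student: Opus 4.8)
The plan is to run the standard "embedding copies of $H$ greedily, vertex by vertex" argument, but to keep track throughout of how the $\delta$-perturbations degrade the density lower bounds, and to absorb all the damage into the single factor $1-\delta m$.

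First I would fix an arbitrary copy of $H$ inside $G'$; relabel so that the vertices of $H$ are $1,\dots,h$ and vertex $a$ is embedded in class $U_{i_a}$. Since $H$ has $m$ edges, each edge $ab\in E(H)$ corresponds to a pair $(U_{i_a},U_{i_b})$ with $d_{G'}(U_{i_a},U_{i_b})>0$, hence $G'[U_{i_a},U_{i_b}]$ is $\epsilon$-regular of density at least $p$, and $G[U_{i_a},U_{i_b}]$ is $\delta$-close to it. I would now embed the vertices of $H$ one at a time in the order $1,2,\dots,h$. After embedding $1,\dots,t$ to vertices $v_1,\dots,v_t$, I maintain a "live" candidate set $L^{(t)}_a\subseteq U_{i_a}$ for each not-yet-embedded vertex $a>t$, consisting of those $w\in U_{i_a}$ that are adjacent \emph{in $G$} to every $v_b$ ($b\le t$) with $ab\in E(H)$. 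The key invariant is $|L^{(t)}_a|\ge (p^{d_a(t)} - (\text{error}))\,n$, where $d_a(t)$ counts edges of $H$ from $a$ to $\{1,\dots,t\}$.

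The heart of the argument is the inductive step: given a large candidate set $L^{(t-1)}_a$ of size $\ge \lambda n$ with $\lambda$ not too small (this is where $n\ge 4h^{h+3}/p^m$ and $\epsilon\le (p^{m+1}/32h^4)^2$ enter, ensuring $\lambda\ge \epsilon$ so regularity applies, and that losing an $\epsilon$-fraction at each of $\le h$ steps is negligible), I want to pass to $L^{(t)}_a = L^{(t-1)}_a\cap N_G(v_t)$ for those $a$ with $at\in E(H)$. Here I use two facts: (1) $\epsilon$-regularity of $G'[U_{i_t},U_{i_a}]$ means all but $\le \epsilon n$ vertices $v_t\in U_{i_t}$ have $|N_{G'}(v_t)\cap L^{(t-1)}_a|\ge (p-\epsilon)|L^{(t-1)}_a|$; (2) since $G[U_{i_t},U_{i_a}]$ is $\delta$-close to $G'[U_{i_t},U_{i_a}]$, at most $\delta n^2$ edges differ, so by averaging at most $(\sqrt{\delta}\,\text{-ish})$ fraction of $v_t$ lose more than $(\sqrt\delta)n$ neighbours — more precisely I would track the \emph{total} edge deficit and show that summing the losses over all $\le m$ edge-pairs of $H$ costs at most $\delta m\cdot n^2$ in aggregate, which translates (after the greedy embedding, counting each copy with multiplicity over the $n^h$ ordered choices) into the claimed $(1-\delta m)$ factor. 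The cleanest way to organise this is: lower-bound the number of copies of $H$ in $G$ by (number of copies of $H$ in $G'$ using only "good" edges) minus (number of $h$-tuples that use at least one $G$-edge which is \emph{absent} in the corresponding perturbed pair); the first term is $\ge (p-\epsilon)^m n^h \ge \tfrac12 p^m n^h$ by the standard counting lemma lower bound together with the choice of $\epsilon$, and the second is at most $\sum_{ab\in E(H)} \delta n^2 \cdot n^{h-2} = \delta m\, n^h$ by a union bound over which pair is the "bad" one.

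The main obstacle I expect is getting the bookkeeping of the two distinct error sources — the $\epsilon$ from irregularity and the $\delta$ from the perturbation — to combine into exactly $1-\delta m$ rather than something messier like $(1-\delta m)(1-o(1))$ with the $o(1)$ needing $n$ large. The hypotheses $\epsilon\le (p^{m+1}/32h^4)^2$ and $n\ge 4h^{h+3}/p^m$ are precisely tuned so that every $\epsilon$-type and every $1/n$-type loss is swamped by the slack between $(p-\epsilon)^m$ (or $\tfrac{1-\delta m}{1}p^m$ after also discarding the perturbation-absent edges) and $\tfrac{1-\delta m}{2}p^m$; the factor $\tfrac12$ in the statement is the "budget" for all of these lower-order terms. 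So the real work is a careful choice of the candidate-set thresholds at each of the $h$ steps and verifying the inequality $(p-\epsilon)^m - (\text{cumulative }\epsilon\text{ and }1/n\text{ losses}) \ge \tfrac12 p^m$ under the stated bounds, then adding back the $-\delta m\,n^h$ from the absent $G$-edges. I would also need the $n\ge 4h^{h+3}/p^m$ bound to ensure that at the final step the live candidate set of the last vertex is not just nonempty but has size $\ge \tfrac{1-\delta m}{2}p^m n$, i.e.\ the counting is "robust", not merely an existence statement.
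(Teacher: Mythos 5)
There is a genuine gap, and it sits exactly at the step you call ``the cleanest way to organise this''. You bound the loss coming from the perturbation by $\sum_{ab\in E(H)}\delta n^2\cdot n^{h-2}=\delta m\,n^h$, i.e.\ you count the $h$-tuples through a modified pair by the trivial $n^{h-2}$ per modified edge. But the main term is only about $p^m n^h$, and the hypotheses give you only $\delta\le 1/2m$, with no relation between $\delta$ and $p$; in the intended regime $p$ is tiny while $\delta$ is a constant, so $\delta m\,n^h\gg \tfrac12 p^m n^h$ and your lower bound $\tfrac12p^m n^h-\delta m\,n^h$ is negative. To get the stated factor $1-\delta m$ you need the loss to be a $\delta m$-\emph{fraction of the main term}, i.e.\ about $\delta m\cdot p^m n^h$, and this requires two refinements you do not have: (a) the number of modified edges between $U_{f(a)}$ and $U_{f(b)}$ is at most $\delta\, e_{G'}(U_{f(a)},U_{f(b)})=\delta p_{a,b}n^2$, not $\delta n^2$ (the $\delta$-closeness is relative to the sparse pair); and (b) for all but $O(\gamma n^2)$ edges $e$ of $G'$ in that pair, the number of homomorphic copies of $H$ through $e$ is only about $n^{h-2}\prod_{(i,j)\ne(a,b)}p_{i,j}\approx p^{m-1}n^{h-2}$, not $n^{h-2}$. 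Point (b) is the real content of the paper's proof: it is an \emph{upper} bound on per-edge copy counts, proved by fixing the two endpoints of $e$, passing to their common neighbourhoods in the remaining classes via a codegree fact and the slicing lemma, and applying the two-sided counting lemma there; the few atypical edges are handled separately with the trivial bound, costing only $\gamma n^h$. Nothing in your write-up supplies this upper bound.

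Your greedy, vertex-by-vertex alternative has the same problem in a different guise. You propose that ``at most a $\sqrt{\delta}$-ish fraction of $v_t$ lose more than $\sqrt{\delta}\,n$ neighbours'', but the live candidate sets have size of order $p^{j}n$, and since $\delta$ is a constant independent of $p$, a loss of $\sqrt{\delta}\,n$ wipes them out entirely; moreover the modified edges may be concentrated on a set of vertices whose size is comparable to (or larger than) the candidate set for $v_t$ itself, so an averaging argument in absolute terms cannot recover a multiplicative $(1-\delta m)$ in front of $p^m$. The per-edge accounting of the paper (copies of $H$ in $G'$, minus copies of $G'$ through each removed edge, then a separate correction for non-injective maps using $n\ge 4h^{h+3}/p^m$) is not just cleaner bookkeeping; it is what makes the error proportional to $p^m n^h$ rather than to $n^h$.
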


The proof of Lemma~\ref{lemma:appCounting}, which is a souped-up version of the standard proof of the graph counting lemma (see, e.g., the survey~\cite{KomlosSi}), appears in Subsection~\ref{subsec:counting-lemma}.
Let us now show how to prove the graph removal lemma by relying on the sparse regular approximation lemma (Theorem~\ref{theo:SRAL}) and the above Lemma~\ref{lemma:appCounting}.

\begin{proof}[Proof of Theorem~\ref{theo:RemRed}]
Let $H$ be a graph with $h \ge 3$ vertices and $m \ge 1$ edges.
Put
$$\e' = \Big(\frac{\e}{h}\Big)^{h^2}, \quad
\d = \Big(\frac{1}{4m}\Big)^2, \quad
d = \Big(\frac{1}{\e'}\Big)^{2} \;.$$
We will prove the bound
\begin{equation}\label{eq:removal-bound}
\Rem_H(\e) \le [d \cdot S(\e',\d,\e)]^h \;.
\end{equation}
It is well known~\cite{RuzsaSz76} that $\Rem_H(\e) \ge (1/\e)^{c\log(1/\e)}$ for some $c=c(H)$. 
Assuming $\e \le \e_0(h)$ is small enough, this means
$\Rem_H(\e) \ge d^{2h}$.
Thus, proving~(\ref{eq:removal-bound}) would imply $d \le S(\e',\d,\e)$,  and therefore $\Rem_H(\e) \le [S(\e',\d,\e)]^{2h}$, which proves~(\ref{eq:RemRed}). 


Let $G$ be an $n$-vertex graph that is $\e$-far from being $H$-free.
Observe that $G$ contains at least $\Delta:=\e n^2/m$ edge-disjoint copies of $H$.
Let $G_1$ be a subgraph of $G$ on $V(G)$ that only consists of $\Delta$ such copies of $H$.
Note that the density $p$ of $G_1$ is given by $\frac12 pn^2 = m\Delta$, or equivalently, $p=2\e$.
Using Definition~\ref{def:SRAL},
there is a graph $G_2$ on $V(G)$ with $|E(G_1) \sd E(G_2)| \le \d |E(G_1)|$ that has an $\e'$-regular equipartition $\P$ 
with $1/\e' \le |\P| \le S(\e',\d,p)$.
Since $G_1$ is a subgraph of $G$, in order to prove~(\ref{eq:removal-bound}) it suffices to show
that $G_1$ contains at least $n^h/(d|\P|)^h$
copies of $H$. 

We next construct a 
subgraph $G_3$ of $G_2$ which would facilitate the embedding of $H$.
We obtain $G_3$ by removing all edges between each pair $V,V' \in \P$ that satisfies either one of the following:
\begin{enumerate}
\item $V = V'$,
\item $G_2[V,V']$ is not $\e'$-regular,
\item $d_{G_2}[V,V'] < \sqrt{\d} \e$, 
\item $G_1[V,V']$ is not $\sqrt{\d}$-close to $G_2[V,V']$, that is,
$| E_{G_1}(V,V')\, \sd \,E_{G_2}(V,V') | > \sqrt{\d} \cdot |E_{G_2}(V,V')|$.
\end{enumerate}
The number of edges removed from $G_2$ to obtain $G_3$ is smaller than
\begin{align*}
&\sum_{V \in \P} \frac12|V|^2
+ \sum_{\substack{V,V' \in \P:\,(V,V')\\\text{not $\e'$-regular}}} |V||V'|
+ \sum_{V,V' \in \P} \sqrt{\d} \e |V||V'|
+ \sum_{V,V' \in \P} \frac{1}{\sqrt{\d}} \big| E_{G_2}(V,V')\, \sd \,E_{G_1}(V,V') \big| \\
&\le \frac{n^2}{2|\P|}
+ \e' n^2
+ \sqrt{\d} \e n^2
+ \frac{1}{\sqrt{\d}} |E(G_2) \sd E(G_1)|
\le 2\e' n^2 
+ \sqrt{\d} \e n^2 + \sqrt{\d} \e n^2 \le 3\sqrt{\d}\e n^2 \;. 
\end{align*}
Thus, one obtains $G_3$ from $G_1$ by modifying fewer than
$(3\sqrt{\d}+\d)\e n^2 \le 4\sqrt{\d}\e n^2 = \e n^2/m = \Delta$
edges.
Recalling that $G_1$ contains $\Delta$ edge-disjoint copies of $H$, we deduce that $G_3$ contains a copy of $H$.

Put $k = |\P|$.
From Items~(i), (ii) and~(iii) above it follows that $G_3$ contains an $h$-partite $(\e', q)$-graph with $q=\sqrt{\d} \e=\e/4m$, having $n/k$ vertices in each vertex class,\footnote{We assume, as we may, that all parts of the equipartition $\P$ have exactly the same size.} containing a copy of $H$. From Item~(iv) it follows that for every pair of its vertex classes $U,U' \in \P$ with $d_{G_3}(U,U') > 0$, $G_1[U,U']$ is $\sqrt{\d}$-close to $G_3[U,U']$.
Recall that our claim is that $G_1$ (and hence $G$) contains at least $n^h/(dk)^h$
copies of $H$.
Assume $n \ge d k$,
as otherwise we are done since already $\Delta \ge 1$.
We apply the approximate counting lemma (Lemma~\ref{lemma:appCounting}), noting that, as required, $\e' \ge (q^{m+1}/32h^4)^2$ and
$n/k \ge d = (h/\e)^{2h^2} \ge 4h^{h+3}/q^m$.
We deduce that the number of copies of $H$ in $G_1$ is at least
$$\frac{1-\sqrt{\d} m}{2} q^m \cdot \Big(\frac{n}{k}\Big)^h = \frac38 q^m \cdot \Big(\frac{n}{k}\Big)^h
\ge \frac{1}{d} \cdot \Big(\frac{n}{k}\Big)^h
\ge \frac{n^h}{(d k)^h} \; ,$$
proving our claim and thus completing the proof.
\end{proof}

\subsection{Approximate counting lemma}\label{subsec:counting-lemma}

In order to prove Lemma~\ref{lemma:appCounting}
we will need the following well-known properties of $\e$-regular graphs.
For completeness, we prove these properties in the appendix.
Throughout, we say the $(A,B)$ is an \emph{$(\e,d)$-regular pair} if the bipartite graph between the vertex subsets $A,B$ is $\e$-regular of density $d$.
Furthermore, we use the notation $x \pm \e$ for a number lying in the interval $[x-\e,\,x+\e]$.

\begin{fact}\label{fact:degrees}
If $(A,B)$ is an $(\e,d)$-regular pair,
all vertices of $B$ but at most $2\e|B|$ have degree $(d \pm \e)|A|$.
\end{fact}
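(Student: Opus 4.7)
The plan is to prove the contrapositive statement by grouping the ``bad'' vertices of $B$ into a set that witnesses a density deviation and thus contradicts $\e$-regularity. Specifically, I would define
$$B^+ = \{v \in B : e_G(v,A) > (d+\e)|A|\} \quad\text{and}\quad B^- = \{v \in B : e_G(v,A) < (d-\e)|A|\},$$
and show that $|B^+| \le \e|B|$ and $|B^-| \le \e|B|$, from which the claim follows by a union bound.

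To bound $|B^+|$, I would suppose for contradiction that $|B^+| > \e|B|$. Then the pair $(A,B^+)$ satisfies the hypotheses of $\e$-regularity applied with $A' = A$ and $B' = B^+$ (indeed $|A| \ge \e|A|$ trivially, and $|B^+| \ge \e|B|$). Averaging the vertex degrees in $B^+$ gives
$$d_G(A,B^+) = \frac{1}{|A||B^+|}\sum_{v \in B^+} e_G(v,A) > d+\e,$$
while $d_G(A,B) = d$. Hence $|d_G(A,B^+) - d_G(A,B)| > \e$, contradicting the $\e$-regularity of $(A,B)$. The argument for $|B^-|$ is symmetric, using the same inequality with the reverse direction.

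I expect no real obstacle here: the entire proof is one application of the definition of $\e$-regularity, and the main thing to keep track of is the correct direction of the inequality when converting a lower (resp.\ upper) bound on individual degrees into a lower (resp.\ upper) bound on the density $d_G(A,B^\pm)$. The only mild subtlety is making sure that the threshold sets $B^\pm$ are large enough to be ``seen'' by the regularity condition, which is exactly why the conclusion allows the weak bound $2\e|B|$ rather than something smaller.
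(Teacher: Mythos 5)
Your proof is correct and is essentially identical to the paper's own one-line argument: partition the bad vertices into those with too-high degree and those with too-low degree, note each such class has density deviating from $d$ by more than $\e$, and conclude each has size at most $\e|B|$. The paper simply collapses the two symmetric cases with a ``without loss of generality.''
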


\begin{fact}
\label{fact:slice}
Let $\a \ge \e > 0$.
Let $(A,B)$ be an $(\e$,d)-regular pair.
If $A' \sub A$, $B' \sub B$ are of size $|A| \ge \a|A|$, $|B| \ge \a|B|$ then the pair $(A',B')$ is $(2\e/\a,\, d \pm \e)$-regular.
\end{fact}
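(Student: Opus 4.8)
The plan is to reduce the statement for the subpair $(A',B')$ directly to the $\e$-regularity hypothesis for $(A,B)$, exploiting the fact that a sufficiently large relative subset of a large set is still a large relative subset of the original set. First I would verify the density claim: applying $\e$-regularity of $(A,B)$ with the pair $(A',B')$ itself (legitimate since $|A'| \ge \a|A| \ge \e|A|$ and $|B'| \ge \a|B| \ge \e|B|$) gives $|d(A',B') - d| \le \e$, i.e.\ $d(A',B') = d \pm \e$. This is the easy half.

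For the regularity half, I would take arbitrary $A'' \sub A'$ and $B'' \sub B'$ with $|A''| \ge (2\e/\a)|A'|$ and $|B''| \ge (2\e/\a)|B'|$, and aim to show $|d(A'',B'') - d(A',B')| \le 2\e/\a$. The key observation is that $|A''| \ge (2\e/\a)|A'| \ge (2\e/\a)(\a|A|) = 2\e|A| \ge \e|A|$, and similarly $|B''| \ge \e|B|$, so both $(A'',B'')$ and $(A',B')$ are ``test pairs'' for the $\e$-regularity of $(A,B)$. Hence $|d(A'',B'') - d| \le \e$ and $|d(A',B') - d| \le \e$, and the triangle inequality yields $|d(A'',B'') - d(A',B')| \le 2\e \le 2\e/\a$, using $\a \le 1$. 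Since $A'', B''$ were arbitrary subsets of the required relative size, this establishes that $(A',B')$ is $(2\e/\a)$-regular.

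I do not anticipate a genuine obstacle here; the only point requiring a little care is keeping track of which inequalities are relative to $|A|,|B|$ versus $|A'|,|B'|$, and making sure the chain $|A''| \ge (2\e/\a)|A'| \ge 2\e|A|$ is applied in the right direction (it uses $|A'| \ge \a|A|$, which is the hypothesis). One should also note at the outset that we may assume $2\e/\a \le 1$, as otherwise the regularity conclusion is vacuous and only the density statement needs checking, which was already done above.
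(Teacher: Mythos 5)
Your proposal is correct and follows essentially the same route as the paper's proof: both get the density estimate by testing $(A',B')$ directly against the $\e$-regularity of $(A,B)$, and both handle the regularity half by observing that sufficiently large subsets of $A',B'$ are still $\e$-large relative to $A,B$, then applying the triangle inequality through $d$. The only cosmetic difference is that the paper tests subsets of relative size $\e/\a$ in $A',B'$ while you use $2\e/\a$, which is exactly what the definition requires; both choices work.
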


\begin{fact}
\label{fact:codeg}
Let the pairs $(A,C),(B,C)$ be $(\e,d)$-regular and $(\e,d')$-regular, respectively.
Write $\codeg(a,b)$ for the number of common neighbors of $a,b$ in $C$, and put $\e'=6\e/d$.
All pairs $(a,b) \in A \times B$ but at most $\e'|A||B|$ satisfy $\codeg(a,b)=(dd' \pm \e')|C|$.
\end{fact}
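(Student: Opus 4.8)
The plan is to count, for how many pairs $(a,b) \in A \times B$, the quantity $\codeg(a,b) = |N(a) \cap N(b) \cap C|$ can fail to be $(dd' \pm \e')|C|$, and to show this number is at most $\e'|A||B|$ where $\e' = 6\e/d$. First I would use Fact~\ref{fact:degrees} applied to the regular pair $(A,C)$ to deduce that all but at most $2\e|A|$ vertices $a \in A$ have $\deg(a,C) = (d \pm \e)|C|$; call these $a$ \emph{good}. Symmetrically, by Fact~\ref{fact:degrees} applied to $(B,C)$, all but at most $2\e|B|$ vertices $b \in B$ have $\deg(b,C) = (d' \pm \e)|C|$; call these $b$ \emph{good}. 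The pairs $(a,b)$ where $a$ or $b$ is not good number at most $2\e|A||B| + 2\e|A||B| = 4\e|A||B|$, which is comfortably within the $\e'|A||B| = 6\e|A||B|/d$ budget (since $d \le 1$). So it remains to handle the good pairs.

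For a good $a$, write $C_a = N(a) \cap C$, so $|C_a| = (d \pm \e)|C| \ge (d-\e)|C|$. I would like to apply $\e$-regularity of the pair $(B,C)$ with $C_a$ in the role of the ``large subset of $C$''; for this I need $|C_a| \ge \e|C|$, which holds as long as $d - \e \ge \e$, i.e. $d \ge 2\e$ — and if this fails then $6\e/d \ge 3 \ge 1$ so the statement is vacuous. Thus regularity of $(B,C)$ gives: for each good $a$, all but at most $\e|B|$ vertices $b \in B$ satisfy $d(b, C_a) = d(B,C) \pm \e = d' \pm \e$, hence $\codeg(a,b) = e(b, C_a) = (d' \pm \e)|C_a| = (d' \pm \e)(d \pm \e)|C| = (dd' \pm O(\e))|C|$. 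Expanding $(d'\pm\e)(d\pm\e) = dd' \pm (d\e + d'\e + \e^2)$, and using $d', \e \le 1$, the error is at most $3\e \le 6\e/d = \e'$ (since $d \le 1$ forces $6/d \ge 6 \ge 3$; more carefully $d\e + d'\e+\e^2 \le 3\e$ and $3\e \le 6\e/d$). So for each good $a$, the number of bad $b$ (with $b$ good but $\codeg(a,b)$ outside the target interval) is at most $\e|B|$.

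Summing over the at most $|A|$ good values of $a$, the number of good pairs $(a,b)$ with $\codeg(a,b) \neq (dd' \pm \e')|C|$ is at most $\e|A||B|$. Adding the non-good pairs, the total is at most $4\e|A||B| + \e|A||B| = 5\e|A||B| \le 6\e|A||B|/d = \e'|A||B|$, as required. I do not expect any real obstacle here — the only mild subtlety is bookkeeping the constant so that everything fits under $6\e/d$ rather than, say, $7\e/d$, and checking that the degenerate regime $d < 2\e$ makes the claim trivial because then $\e' \ge 1$. One should also note the implicit assumption $d > 0$ (so that $\e' = 6\e/d$ is defined), which is harmless since $d = 0$ makes $\codeg$ identically $0$ and the claim holds for any positive $\e'$.
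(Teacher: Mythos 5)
Your proof is correct and follows essentially the same two-step scheme as the paper's: first discard the at most $2\e|A|$ vertices $a\in A$ of atypical degree into $C$ (Fact~\ref{fact:degrees}), then for each remaining $a$ control $e(b,N(a)\cap C)$ for most $b\in B$. The only difference is in how the second step is executed: the paper passes through the slicing lemma (Fact~\ref{fact:slice}), so that $(B,\,N(a)\cap C)$ becomes a $(2\e/d,\,d'\pm\e)$-regular pair and Fact~\ref{fact:degrees} yields at most $(4\e/d)|B|$ exceptional $b$ per good $a$, whereas you apply the definition of $\e$-regularity of $(B,C)$ directly with $N(a)\cap C$ as the large subset of $C$, which avoids the slicing lemma and gives better constants. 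One small correction: that direct argument bounds the $b$'s with $d(b,N(a)\cap C)>d'+\e$ and those with $d(b,N(a)\cap C)<d'-\e$ by $\e|B|$ \emph{each}, so the exceptional count per good $a$ is $2\e|B|$, not $\e|B|$; your total then becomes $(4\e+2\e)|A||B|=6\e|A||B|\le (6\e/d)|A||B|$, so the conclusion still holds (and in fact discarding the atypical $b$'s at the start was unnecessary, which would give $4\e|A||B|$).
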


\begin{lemma}[Counting Lemma, Lemma 1.6 in~\cite{DukeLeRo95}]\label{lemma:counting}
Let $H$ be a graph on $[h]$ and let $G$ be an $h$-partite graph on $(V_1,\ldots,V_h)$.
If all pairs $(V_i,V_j)$ are $\e$-regular then the number of induced copies of $H$ in $G$ where vertex $i \in [h]$ is embedded in $V_i$ is
$$\prod_{i=1}^h |V_i|\bigg( \prod_{1\le i<j \le h} p'_{i,j} \,\pm\, \sqrt{h^3\e} \bigg) \;,$$
where $p'_{i,j} = d_G(V_i,V_j)$ if $(i,j) \in E(H)$ and $p'_{i,j}=1-d_G(V_i,V_j)$ otherwise.
\end{lemma}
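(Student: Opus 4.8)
The plan is to run the standard vertex-by-vertex embedding of $H$ into $G$, tracking the shrinking ``candidate sets''. Process the vertices $1,2,\ldots,h$ of $H$ in order. For a partial embedding $v_1\in V_1,\ldots,v_t\in V_t$ respecting all adjacency/non-adjacency constraints among $v_1,\ldots,v_t$, and for each $j>t$, let $W_j=W_j(v_1,\ldots,v_t)\sub V_j$ be the set of $w\in V_j$ with $w\sim v_i$ whenever $ij\in E(H)$ and $w\not\sim v_i$ whenever $ij\notin E(H)$, for all $i\le t$ (so $W_j=V_j$ when $t=0$). The induced copies of $H$ with $i\mapsto V_i$ are exactly the tuples obtained by successively choosing $v_t\in W_t(v_1,\ldots,v_{t-1})$, so it suffices to count these.

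The engine of the proof is a one-step estimate, which I would derive directly from the defect form of $\e$-regularity (it also follows by combining Fact~\ref{fact:slice} with Fact~\ref{fact:degrees}): if $(V_i,V_j)$ is $\e$-regular of density $d$ and $W\sub V_j$ has $\s{W}\ge\e\s{V_j}$, then all but at most $2\e\s{V_i}$ vertices $v\in V_i$ satisfy $\s{N(v)\cap W}=(d\pm\e)\s{W}$, and hence the number of non-neighbours of $v$ in $W$ equals $\big((1-d)\pm\e\big)\s{W}$. Indeed, were there $\e\s{V_i}$ or more vertices $v$ with $\s{N(v)\cap W}>(d+\e)\s{W}$, they would form a set $A'\sub V_i$ with $\s{A'}\ge\e\s{V_i}$ and $d(A',W)>d+\e$, contradicting $\e$-regularity; the symmetric bound from below, and the non-neighbourhood statement via complementation, are immediate.

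Iterating, call $(v_1,\ldots,v_t)$ \emph{good} if at every step $s\le t$ the vertex $v_s$ was non-exceptional for this one-step estimate applied to each pair $(V_s,V_j)$ with $s<j\le h$ and the then-current candidate set. By induction on $t$ one obtains, for good $(v_1,\ldots,v_t)$ and every $j>t$, that $\s{W_j(v_1,\ldots,v_t)}=\s{V_j}\big(\prod_{i\le t}p'_{ij}\pm 2t\e\big)$, the error coming from expanding a product of at most $t$ factors of the form $p'_{sj}\pm\e$. Provided $\prod_{i<j}p'_{ij}\ge 2h\e$ for every $j$ — precisely the range in which the claimed bound is not vacuous — these candidate sets have size $\ge\e\s{V_j}$, so the one-step estimate keeps applying, and at each step $s$ at most $2h\e\s{V_s}$ of the $v_s\in W_s$ are exceptional. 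Hence the number of good induced copies equals $\prod_{j=1}^h\s{V_j}\big(\prod_{i<j}p'_{ij}\pm 4h\e\big)$, and multiplying out these $h$ factors — using $\prod_j\prod_{i<j}p'_{ij}=\prod_{i<j}p'_{ij}$ and that every partial product is at most $1$ — gives $\prod_j\s{V_j}\big(\prod_{i<j}p'_{ij}\pm O(h^2\e)\big)$. For the upper bound I would additionally bound the non-good copies: one whose first exceptional step is $s$ is determined by a good prefix of length $s-1$ (at most $\prod_{j<s}\s{V_j}$ ways), an exceptional $v_s$ ($<2h\e\s{V_s}$ ways) and arbitrary $v_{s+1},\ldots,v_h$, giving at most $2h\e\prod_j\s{V_j}$ such copies, hence at most $2h^2\e\prod_j\s{V_j}$ in total — still within $O(h^2\e)$. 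As $O(h^2\e)\le\sqrt{h^3\e}$ throughout the relevant range of $\e$ (and both inequalities in the statement hold trivially outside it), this establishes the lemma.

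The step I expect to demand the most care is the bookkeeping just described: keeping each candidate set $W_j$ above the threshold $\e\s{V_j}$ at all (up to $h$) steps, which is where the implicit smallness of $\e$ relative to $\prod p'_{ij}$ enters, and controlling how the per-pair multiplicative errors $p'_{ij}\pm\e$ accumulate over the $\binom{h}{2}$ pairs into the single additive error $\sqrt{h^3\e}$. This is also the only point where the ``good''/``exceptional'' dichotomy matters, and it is needed for the upper bound though irrelevant for the lower bound.
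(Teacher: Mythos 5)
The paper does not actually prove this lemma: it is imported verbatim as Lemma~1.6 of~\cite{DukeLeRo95}, so your proposal can only be judged on its own terms. Your plan is the standard greedy-embedding proof, and its engine (the one-step degree estimate, the good/exceptional dichotomy, the accumulation of per-step errors) is sound. There is, however, one genuine gap: the parenthetical claim that the hypothesis $\prod_{i<j}p'_{ij}\ge 2h\e$ for every $j$ is ``precisely the range in which the claimed bound is not vacuous''. This is true only for the \emph{lower} bound (if some such product is below $2h\e$ then $\prod_{1\le i<j\le h}p'_{ij}\le 2h\e\le\sqrt{h^3\e}$, so the lower bound is indeed trivial). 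The \emph{upper} bound remains a nontrivial assertion in that regime. For instance, take $h=4$, $H=K_4$, all densities equal to $10\sqrt{\e}$ and $\e<10^{-6}$: the trivial bound (every copy uses an edge of one fixed pair) only gives at most $10\sqrt{\e}\prod_i\s{V_i}$ copies, whereas the lemma asserts at most roughly $\sqrt{h^3\e}\prod_i\s{V_i}=8\sqrt{\e}\prod_i\s{V_i}$. Moreover, in exactly this regime the candidate sets $W_j$ can genuinely shrink below $\e\s{V_j}$ (here $q^3\s{V_4}=1000\e^{3/2}\s{V_4}<\e\s{V_4}$), at which point $\e$-regularity no longer bounds the number of exceptional vertices, so your accounting of non-good copies does not apply and the upper bound is left unproven there.

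The gap is repairable by the standard device: declare a failure at step $s$ not only when $v_s$ is exceptional, but also when some current candidate set $W_j^{(s-1)}$ has size below $\e\s{V_j}$. Every copy whose first failure is of the latter kind for the pair $(s,j)$ must have $v_j\in W_j^{(s-1)}$, so for each pair such copies number fewer than $\e\s{V_j}\prod_{j'\ne j}\s{V_{j'}}$, contributing at most $\binom{h}{2}\e\prod_i\s{V_i}$ in total; this is absorbed into the $\sqrt{h^3\e}$ error alongside your other terms, and with it the upper bound holds without any assumption on the densities. With this one addition your argument becomes the standard proof of the Duke--Lefmann--R\"odl counting lemma; what remains is only the constant bookkeeping you already flag, namely checking that the accumulated error, of order $h^2\e$, is at most $\sqrt{h^3\e}$ once the trivial range $\e\ge h^{-3}$ is excluded, which does go through provided the constants are tracked a bit more tightly than in your sketch.
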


We are now ready to prove the approximate counting lemma.

\begin{proof}[Proof of Lemma~\ref{lemma:appCounting}]
Assume $G'$ contains a copy of $H$. Fix an embedding and let $f:[h]\to [k]$ be such that vertex $i \in [h]$ of $H$ is embedded in $U_{f(i)}$. We henceforth refer to a homomorphic copy of $H$ where vertex $i\in[h]$ is embedded in $U_{f(i)}$ as an \emph{$f$-copy}.
Put $p_{i,j} = d_{G'}(U_{f(i)},U_{f(j)})$. 
Let $(a,b) \in E(H)$. We will prove that for all edges $e$ of $G'$ between $U_{f(a)}$ and $U_{f(b)}$ but at most $\g n^2$, the number of
$f$-copies in $G'$
containing $e$ is
\begin{equation}\label{eq:counting-app1}
n^{h-2} \bigg(\prod_{\substack{(i,j) \in E(H)\\(i,j) \neq (a,b)}} p_{i,j} \pm \gamma \bigg) \;,
\end{equation}
where $\g = p^m/4h^2$.
We will also show that 
the total number of $f$-copies in $G'$ is
\begin{equation}\label{eq:counting-app2}
n^h \Big(\prod_{(i,j) \in E(H)} p_{i,j} \pm \g \Big) \;.
\end{equation}
Applying~(\ref{eq:counting-app1}) for each edge $e \in E(G') \sm E(G)$ (and each $(a,b) \in E(H)$),
we deduce using~(\ref{eq:counting-app2})
that in $G$ the total number of $f$-copies is at least
\begin{align*}
& n^h \Big(\prod_{(i,j) \in E(H)} p_{i,j} - \g \Big) - m\Bigg( \d p_{a,b} n^2 \cdot n^{h-2}\Big(\prod_{\substack{(i,j) \in E(H)\\(i,j) \neq (a,b)}} p_{i,j} + \g \Big) + \g n^h \Bigg) \\
&\ge n^h \bigg( (1 - \d m)\prod_{(i,j) \in E(H)} p_{i,j} - (2m+1)\g \bigg)
\ge n^h \bigg( (1 - \d m) p^m - (2m+1)\g \bigg) \;,
\end{align*}
where in the last inequality we used the fact that $p_{i,j} \ge p$ for every $(i,j) \in E(H)$, which follows from the lemma's assumptions that $G'$ contains an ($f$-)copy of $H$, meaning $p_{i,j} \neq 0$ for $(i,j) \in E(H)$, and that $G'$ is an $(\e,p)$-graph.
However, some of these homomorphic copies may not be proper copies.
The number of mappings from $V(H)$ to $V(G)$ that are not injective is $$(hn)^h-\prod_{i=0}^{h-1} (hn-i) \le h^2 (hn)^{h-1} = (h^{h+1}/n) \cdot n^h \le \g n^h \;,$$
where in the last inequality we used the assumption that $n \ge 4h^{h+3}/p^m = h^{h+1}/\g$.
We deduce that, as desired, the number of (proper) copies of $H$ in $G$ is at least
$$n^h \big( (1-\d m) p^m - h^2\g \big) \ge n^h \cdot \frac12(1-\d m) p^m \;,$$
where we used the fact that $h^2\g = p^m/4$ and $\d \le 1/2m$.

It remains to prove~(\ref{eq:counting-app1}) and~(\ref{eq:counting-app2}).
Let $F$ be the $h$-partite graph obtained from $G'$ by replacing each vertex class $U_t$ by $|f^{-1}(t)|$ copies $(V_i)_{i \in f^{-1}(t)}$, so that $F[V_i,V_j]=G'[U_{f(i)},U_{f(j)}]$ if $(i,j) \in E(H)$ and $F[V_i,V_j]$ is empty otherwise (so in particular, each $F[V_i,V_j]$ is $\e$-regular).. Observe that the number of $f$-copies in $G'$ is equal to the number of (induced) copies of $H$ in $F$ where vertex $i \in [h]$ is embedded in $V_i$.
Therefore, Lemma~\ref{lemma:counting} implies~(\ref{eq:counting-app2}).
As for proving~(\ref{eq:counting-app1}), let us fix $(a,b) \in E(H)$.
For $(x,y) \in V_a \times V_b$ and $i \notin \{a,b\}$ let
$$V'_i = \{z \in V_i \,:\, (i,a) \in E(H) \Rightarrow (z,x) \in E(F), \,\, (i,b) \in E(H) \Rightarrow (z,y) \in E(F) \} \;.$$
Put $\e' = 6\e/p$, and put $p'_{i,j} = d(V_i,V_j)$ if $(i,j) \in E(H)$ and $p'_{i,j}=1$ otherwise.
It follows from Fact~\ref{fact:codeg} that all pairs $(x,y) \in V_a \times V_b$ but at most $h\e' n^2$ satisfy, for all $i \notin \{a,b\}$, that $|V'_i| \ge n(p'_{i,a}p'_{i,b} - \e')$ ($\ge np^2/2$).
For a ``good'' such pair $(x,y)$, let $F_{x,y}$ be the $(h-2)$-partite subgraph of $F$ induced on the $V'_i$.
By Fact~\ref{fact:slice}, all pairs $(V'_i,V'_j)$ are $4\e/p^2$-regular
of density $d(V_i,V_j) \pm \e$.
Letting $H'$ be the induced subgraph of $H$ obtained by removing $a$ and $b$, it follows from Lemma~\ref{lemma:counting} that the number of copies of $H'$ in $F_{x,y}$, where vertex $i \in [h]$ is embedded in $V'_i$, is
$$\prod_{\substack{i \in [h]\,:\\i\neq a,b}} n(p'_{i,a}p'_{i,b} \pm \e')\bigg( \prod_{\substack{(i,j) \in E(H)\\i,j \notin \{a,b\}}} (p_{i,j} \pm \e) \,\pm\, \sqrt{h^3 \cdot 4\e/p^2} \bigg)
= n^{h-2} \bigg( \prod_{\substack{(i,j) \in E(H)\\(i,j) \neq (a,b)}} p_{i,j} \pm \big(2m\e' + 2h^2\sqrt{\e}/p \big) \bigg) \;.\footnote{For the upper bound we used the inequality $\prod_{i=1}^m (p_i+\e') \le \prod_{i=1}^m p_i + 2m\e'$ for all $0 < p \le p_i \le 1$ and $\e' \le p/m$.
To prove it, notice
$\prod_{i=1}^m (p_i+\e')/\prod_{i=1}^m p_i = \prod_{i=1}^m (1+\e'/p_i) \le \exp(\sum_{i=1}^m \e'/p_i) \le 1 + 2\sum_{i=1}^m \e'/p_i$.}$$
As the error above is at most $8h^2\sqrt{\e}/p \le p^m/4h^2 = \g$ we deduce~(\ref{eq:counting-app1}), completing the proof.
\end{proof}


\renewcommand{\D}{\mathcal{D}}
\newcommand{\Gs}{G_s^{\circ}}
\newcommand{\Xt}{\widetilde{X}}

\section{Lower Bound for SRAL}\label{sec:LB}

\subsection{Proof overview}\label{subsec:LB-overview}

In this section we prove Theorem~\ref{theo:LB2}.
First, we give a short description of the density-$p$ graph $G$ witnessing our lower bound, followed by an overview of the proof of correctness.

\paragraph{Construction.}
We construct a bipartite graph $G$ iteratively as follows. Starting with $G_0 = K_{k,k}$ where $k = \poly(1/p)$, we define $G_{i+1}$ as follows; we take a blow-up of $G_{i}$, inflating each vertex into $2^{\Omega(|V(G_i)|)}$ vertices and replacing each edge by a complete bipartite graph.
For each such complete bipartite graph on vertex sets $(X,Y)$ we randomly bipartition $X = X_1 \cup X_2$ and $Y = Y_1 \cup Y_2$; we then remove all edges between $X_1, Y_2$ and between $X_2,Y_1$, therefore removing half of all edges.
We repeat the above process $s=\log \frac{1}{p}$ times, thus obtaining a graph $G_s$ of density $p$. As our final graph $G$ we take any blow-up of $G_s$.
Note that $V(G_i)$ naturally defines a partition $\X_i$ of $V(G)$.
Clearly, $|\X_i| = \twr(\Omega(i))$ and it has the property that a $2^{-i}$-fraction of its pairs have density $2^i p$ in $G$ while the rest have density $0$.
We note that the idea behind the above construction is for it to be ``hard'' for the iterative upper bound proof of Theorem~\ref{theo:SRAL} based on Scott's method from~\cite{Scott11} that we mentioned in Subsection~\ref{subsec:intro-UB}.

\paragraph{Proof of correctness.}
As in most lower bound proofs for regularity lemmas pioneered by Gowers~\cite{Gowers97}, we use the following notions (more precise definitions appear in Subsection~\ref{subsec:LB-main}).
We say that $S$ is \emph{$\g$-contained} in $T$ if all but a $\g$-fraction of $S$ is contained in $T$ (i.e., $|S \sm T| \le \g|S|$).
We say that an equipartition $\Z$ \emph{$\g$-refines} $\X$ if all but $\g|\Z|$ clusters $Z \in \Z$ are $\g$-contained in some $X \in \X$.
For simplicity, one may think of $\g$ as a fixed small constant for the rest of this subsection.

We now give an overview of the crux of the proof.
Let $G'$ be any graph obtained from $G$ by adding/removing $\d_0 |E(G)|$ edges, where again one may think of $\d_0$ as a fixed small constant.
Our proof shows that any $\e$-regular equipartition $\Z$ of $G'$ with $\e=p^5$ must $\g$-refine $\X_s$, where we assume for simplicity that $\Z$ refines $\X_0$.\footnote{This is in fact how we proceed in the actual proof. Namely, the main technical part the proof (Theorem~\ref{theo:LB}) essentially makes this assumption, and we reduce to this case in the proof of Theorem~\ref{theo:LB3} below.} This implies that $|\Z| = \Omega(|\X_s|) \ge \twr(\Omega(s)) = \twr(\Omega(\log\frac{1}{p}))$, proving the lower bound on $S(\e,\d_0,p)$ that is stated in Theorem~\ref{theo:LB2}.

The proof proceed by assuming towards contradiction that $\Z$ does not $\g$-refine $\X_s$, meaning there are at least $\g|\Z|$ parts $Z \in \Z$ that are not $\g$-contained in a member of $\X_s$.
Letting $Z$ be such a part, there must be $1 \le r \le s$ such that $Z$ is $\g$-contained in $X \in \X_{r-1}$ yet is not $\g$-contained in any one member of $\X_r$.
Using the randomness in the choice of the bipartitions $X=X_1 \cup X_2$ in the construction of $G$, it can be shown that an $\Omega(2^{-r})$-fraction of the bipartitions
satisfy $\min\{|Z \sm X_1|,\,|Z \sm X_2|\} \ge \Omega(\g|Z|)$.
Fix one such bipartition $X_1 \cup X_2$, and let $Y \in \X_{r-1}$ be the part that ``induces'' it (so $Y$ is also bipartitioned, $Y=Y_1 \cup Y_2$).
Assume without loss of generality that $|Z \cap X_1| \ge \frac12|Z \cap X| \approx \frac12|Z|$.
The structure of $G$ is then used to argue that the density between $Z \cap X_1$ and $Y_1$ is $2^r p$, while the density between $Z \sm X_1$ and $Y_1$ is only a fraction of $2^r p$.
Importantly, as explained above, both $Z \cap X_1$ and $Z \sm X_1$ are linear-size subsets of $Z$.

Next, we use an important property of $G$ which is that $G$ is a (somewhat) quasirandom bipartite graph.
Together with the assumption that $Y$ is essentially partitioned into clusters $Z' \in \Z$ and the assumption that all pairs $(Z,Z')$ are regular in $G'$, one can deduce that, roughly speaking, ``almost'' all pairs of the form $(Z,Y_1)$  
must be regular in $G'$ as well.
Since $(Z,Y_1)$ was already shown to be irregular in $G$ in a strong sense---having linear-size witnesses and an $\Omega(2^r p)$ density discrepancy---it follows that $G'[Z,Y_1]$, the bipartite subgraph of $G'$ induced by $Z \cup Y_1$, must have been obtained from $G[Z,Y_1]$ by adding or removing $\Omega(2^r p|Z||Y_1|)$ edges.
Summing over all  $\Omega(2^{-r}|\X_{r-1}|)$ parts $Y \in \X_{r-1}$ as above,
we deduce that the number of modification in the edges adjacent to $Z$ is $\Omega(|Z|pn)$.
Next, by summing over all $Z \in \Z$ as above, 
it follows that the total number of modifications is $\Omega(|E(G)|)$,
a contradiction that completes the proof.

The overview above clearly hides quite a few assumptions, steps and subtleties.
As one example,
the fact that $\Z$ is not exactly a refinement of $\X_{r-1}$
introduces an error term to our lower bound on the number of edge modifications between $Z$ and $Y_1$.
While this error term can certainly ``kill off'' the main term for some of the $Y_1$'s, the quasirandomess of $G$ implies that it has a negligible effect when summing over sufficiently many $Y \in \X_{r-1}$.
Of course, we must also guarantee that our bipartite $G$ is not \emph{too} quasirandom, as otherwise it would have had a $p^5$-regular partition of order $2$.
The proof of Theorem~\ref{theo:LB2} spans the rest of this section.
Specifically, the next three subsections contain the construction of $G$ and the proofs of its various properties, and the following two subsections contain the technical parts of the proof described here. The last subsection
completes the proof of Theorem~\ref{theo:LB2} by showing that the complete bipartite graph can be decomposed into copies of $G$.

\subsection{Preliminary lemmas}\label{subsec:LB-pre}

We use the standard definitions and notations given in Section~\ref{sec:Pert}.
In this section all graphs are bipartite.
We note that our actual construction differs slightly from the one described in Subsection~\ref{subsec:LB-overview} in that the random bipartitions are replaced by a sequence of deterministic bipartitions having pseudorandom properties.
This will allow us to more easily control the measure of quasirandomness as we go through the iterative construction.

\paragraph*{Pseudorandom bipartitions.}
Let $\B=(X_{1,0},X_{1,1}),\ldots,(X_{d,0},X_{d,1})$ be a sequence of equitable bipartitions
of a set $X$ with $|X|$ even.
We say that $\B$ is \emph{$\a$-orthogonal} if for every $1 \le i < j \le d$ and $\ell,\ell' \in \{0,1\}$ we have
\begin{equation}\label{eq:orthogonal-def}
|X_{i,\ell} \cap X_{j,\ell'}| \le \Big(\frac14 + \a\Big)|X| \;.
\end{equation}
We say that $\B$ is \emph{$\b$-balanced} if for every $x \neq y \in X$, the number of bipartitions $(X_{i,0},X_{i,1})$ with $x,y \in X_{i,0}$ or $x,y \in X_{i,1}$ is at most $(\frac12 + \b)d$.
We say that $\B$ is an \emph{$(n,d,\a,\b)$-sequence} if $|X|=n$, $|\B|=d$ and $\B$ is both $\a$-orthogonal and $\b$-balanced.

\begin{lemma}\label{lemma:balanced}
For every $d \ge 200$ and every even $n \le 2^{\floor{d/200}}$
there is an $(n,d,\a,\b)$-sequence with
$\a=\sqrt{2\ln(d)/n}$, $\b=1/16$.
\end{lemma}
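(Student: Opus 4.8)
The plan is to prove Lemma~\ref{lemma:balanced} by the probabilistic method: choose the $d$ bipartitions $(X_{i,0},X_{i,1})$ independently and uniformly at random among all equitable bipartitions of $X$, and show that with positive probability the resulting sequence is simultaneously $\a$-orthogonal and $\b$-balanced for the stated parameters. First I would set up the random model cleanly. For each $i$, let $\sigma_i\colon X\to\{0,1\}$ be the indicator of membership in $X_{i,1}$, where $\sigma_i$ is a uniformly random balanced $\pm$-assignment (exactly $n/2$ ones); the $\sigma_i$ are mutually independent across $i$. Orthogonality is a statement about pairs $i<j$ and signs $\ell,\ell'$, while balancedness is a statement about pairs $x\neq y$ of elements. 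I would bound the failure probability of each type of event and apply a union bound, checking that the total is strictly less than $1$ under the hypotheses $d\ge 200$ and $n\le 2^{\lfloor d/200\rfloor}$.

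For the orthogonality condition, fix $i<j$ and $\ell,\ell'$. The quantity $|X_{i,\ell}\cap X_{j,\ell'}|$ has mean exactly $n/4$ (up to rounding, since both parts have size $n/2$); its distribution is hypergeometric-type, so I would invoke a Chernoff/Hoeffding bound for sampling without replacement to get $\Pr[\,|X_{i,\ell}\cap X_{j,\ell'}| > (1/4+\a)n\,] \le \exp(-c\a^2 n)$ for an absolute constant $c$ (in fact $c=2$ works with the right normalization, which is why the lemma states $\a=\sqrt{2\ln d / n}$: this makes $\exp(-2\a^2 n \cdot \text{const})$ roughly $d^{-\Theta(1)}$, beating the $O(d^2)$ union bound over pairs $i<j$ and the four sign choices). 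So the union bound over all orthogonality events contributes $O(d^2)\cdot d^{-\Theta(1)}$, which is $o(1)$ once the constant in the exponent is tracked carefully — this is the routine calculation I would not grind through but would need to get right.

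For the balancedness condition, fix $x\neq y\in X$. For each $i$, the event ``$x,y$ on the same side of the $i$-th bipartition'' has probability close to $1/2$ (precisely $\tfrac{n/2-1}{n-1}<\tfrac12$, slightly below a half because the bipartition is balanced), and these events are independent over $i$. So the count is a sum of $d$ independent indicators with mean at most $d/2$, and $\Pr[\text{count} > (1/2+\b)d]\le \exp(-2\b^2 d) = \exp(-d/128)$ since $\b=1/16$. The union bound over the at most $n^2$ pairs $(x,y)$ gives $n^2 \exp(-d/128)$, and here is exactly where the hypothesis $n\le 2^{\lfloor d/200\rfloor}$ is used: $n^2 \le 2^{2d/200} = 2^{d/100}$, and $2^{d/100}\exp(-d/128) = \exp(d(\tfrac{\ln 2}{100} - \tfrac{1}{128})) \to 0$ since $\tfrac{\ln 2}{100}\approx 0.0069 < \tfrac1{128}\approx 0.0078$. (If the slack is tighter than I expect I would adjust by using a slightly larger absolute constant than $200$ or a sharper Chernoff estimate — but the lemma as stated picks $200$ precisely so these two numbers separate.)

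The main obstacle I anticipate is purely bookkeeping rather than conceptual: making the two exponential bounds beat their respective union-bound multiplicities with the specific constants $\a=\sqrt{2\ln d/n}$, $\b=1/16$, and the threshold $2^{\lfloor d/200\rfloor}$, including correctly handling the rounding in ``equitable'' bipartitions (parts of size $\lceil n/2\rceil$ and $\lfloor n/2\rfloor$ when $n$ is even this is just $n/2$, so in fact no rounding — the statement assumes $n$ even, which simplifies things) and the small negative correction $-1/(n-1)$ in the balancedness mean (harmless, it only helps). Summing the two $o(1)$ contributions gives total failure probability $<1$ for all $d\ge 200$, so a valid $(n,d,\a,\b)$-sequence exists, completing the proof. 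One should double-check the edge behavior near $d=200$ (where the asymptotic-in-$d$ estimates are weakest); if needed, one replaces the constant $c$ in the Chernoff bound by the explicit $2$ from Hoeffding's inequality and verifies the inequality numerically at $d=200$, which suffices by monotonicity for all larger $d$.
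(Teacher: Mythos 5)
Your proposal is correct and follows essentially the same route as the paper: independent uniformly random equitable bipartitions, a Hoeffding/Chernoff bound for the hypergeometric overlap $|X_{i,\ell}\cap X_{j,\ell'}|$ to get $\a$-orthogonality, a Chernoff bound over the $d$ independent ``same side'' events for $\b$-balancedness, and a union bound over the $O(d^2)$ pairs of indices and $\binom{n}{2}$ pairs of elements, with $n\le 2^{\floor{d/200}}$ ensuring the second term stays below $1$. The constants check out exactly as you computed, so the argument matches the paper's proof.
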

\begin{proof}
Choose $d$ equitable bipartitions $(X_{i,0},X_{i,1})$ of $X$ independently and uniformly at random, where $|X|=n$ is even.
Fix a pair $1 \le i<j \le d$. The random variable $|X_{i,0} \cap X_{j,1}|$ follows a hypergeometric distribution. Thus, we may apply the Chernoff bound (see Section~6 in~\cite{Heoffding63}), meaning the probability that $|X_{i,0} \cap X_{j,0}| = (\frac14 \pm \a)n$ (which is equivalent to all four inequalities in~(\ref{eq:orthogonal-def}) with $\ell,\ell' \in \{0,1\}$) does not hold is at most $2\exp(-2\a^2 n)$.
Next, fix a pair $x \neq y \in X$. The probability that a given $1 \le i \le d$ satisfies $x,y \in X_{i,0}$ or $x,y \in X_{i,1}$ is $2\binom{n-2}{n/2-2}/\binom{n}{n/2} \le 1/2$. Since these events are mutually independent we may apply the Chernoff bound, meaning the probability that there are more than $(\frac12 + \b)d$ values of $i$ for which the above holds is at most $\exp(-2\b^2 d)$.

By the union bound, the probability that at least one of the two events above holds for some choice of $1 \le i<j \le d$ or $x \neq y \in X$ is at most
$$\binom{d}{2} \cdot 2\exp(-2\a^2 n) + \binom{n}{2} \cdot \exp(-2\b^2 d) \;.$$
This probability is smaller than $1$ when taking $d,n,\a,\b$ as in the statement, completing the proof.
\end{proof}

We will later need the following trivial fact.
\begin{fact}\label{fact:subseq}
Any subsequence of length $d'$ of an $(n,d,\a,\b)$-sequence is an $(n,d',\a,1/2)$-sequence.
\end{fact}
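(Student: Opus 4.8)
Looking at this, the final statement to prove is Fact~\ref{fact:subseq}: any subsequence of length $d'$ of an $(n,d,\a,\b)$-sequence is an $(n,d',\a,1/2)$-sequence.

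The plan is to verify directly that the three defining requirements of an $(n,d',\a,1/2)$-sequence are all inherited from—or trivially weaker than—the corresponding properties of the given $(n,d,\a,\b)$-sequence. Write the subsequence as $(X_{i_1,0},X_{i_1,1}),\ldots,(X_{i_{d'},0},X_{i_{d'},1})$ with $i_1<\cdots<i_{d'}$; it is again a sequence of equitable bipartitions of the same set $X$, so the size condition $|X|=n$ and the length condition $|\B'|=d'$ are immediate.

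For $\a$-orthogonality I would simply observe that~(\ref{eq:orthogonal-def}) is a condition on \emph{pairs} of bipartitions: it asserts $|X_{i,\ell}\cap X_{j,\ell'}|\le(\frac14+\a)|X|$ for all $1\le i<j\le d$ and all $\ell,\ell'\in\{0,1\}$. Since every pair $(i_a,i_b)$ with $a<b$ occurring in the subsequence is in particular such a pair in the original sequence, the inequality holds verbatim for the subsequence, so it is $\a$-orthogonal.

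For $(1/2)$-balancedness, the point is that the bound $(\frac12+\frac12)d'=d'$ is vacuous: for any $x\neq y\in X$ the number of bipartitions in the subsequence that place $x$ and $y$ on a common side cannot exceed the total number $d'$ of bipartitions in the subsequence. Hence the subsequence is automatically $(1/2)$-balanced, and combining the three observations shows it is an $(n,d',\a,1/2)$-sequence.

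There is essentially no obstacle here—this is why the statement is labelled a Fact rather than a Lemma; the only thing worth noting is that the balancedness parameter necessarily degrades to the trivial value $1/2$ (one cannot expect to retain $\b=1/16$ for an arbitrary subsequence), which is precisely what the statement records and all that is needed in the later application.
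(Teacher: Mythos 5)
Your proof is correct and matches the paper, which states this as a trivial fact without proof: orthogonality is a pairwise condition inherited verbatim by any subsequence, and $1/2$-balancedness is vacuous since the bound $(\frac12+\frac12)d'=d'$ always holds. Nothing further is needed.
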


\renewcommand{\l}{\lambda}

Our proof will critically rely on the following lemma from~\cite{MoshkovitzSh14}, which improved upon a similar lemma from~\cite{Gowers97}.
\begin{lemma}\label{lemma:1-6}
If $(X_{1,0},X_{1,1}),\ldots,(X_{d,0},X_{d,1})$ is a sequence of bipartitions of $X$ that is $\frac{1}{16}$-balanced, then for every $\l=(\l_1,\ldots,\l_{|X|})$ with $\l_t \ge 0$ and $\norm{\l}_{1}=1$, at least $d/6$ of the bipartitions $(X_{i,0},X_{i,1})$ satisfy $\min\{\sum_{t\in X_{i,0}} \l_t,\sum_{t\in X_{i,1}} \l_t\} \ge \frac18(1-\norm{\l}_{\infty})$.
\end{lemma}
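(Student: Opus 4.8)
The plan is to reduce everything to a single weighted‑counting identity together with the $\tfrac1{16}$‑balanced hypothesis. Write $a_i=\sum_{t\in X_{i,0}}\lambda_t$ and $b_i=\sum_{t\in X_{i,1}}\lambda_t$, so that $a_i+b_i=\norm{\lambda}_1=1$ for every $i$; set $m=\norm{\lambda}_\infty$ and $c=1-m$, so the quantity to bound below is $\min\{a_i,b_i\}$ and the target threshold is $c/8$. The only use of the balanced hypothesis will be that for any two distinct $x,y\in X$, at least $d-(\tfrac12+\tfrac1{16})d=\tfrac7{16}d$ of the bipartitions separate $x$ from $y$. Expanding $a_ib_i=\sum_{x\in X_{i,0},\,y\in X_{i,1}}\lambda_x\lambda_y$ and summing over $i$, each unordered pair $\{x,y\}$ with $x\neq y$ is counted once for every bipartition separating it, so
$$\sum_{i=1}^d a_ib_i=\sum_{\{x,y\}:\,x\neq y}\lambda_x\lambda_y\cdot\#\{i:(X_{i,0},X_{i,1})\text{ separates }x,y\}\ \ge\ \tfrac7{16}d\!\!\sum_{\{x,y\}:\,x\neq y}\!\!\lambda_x\lambda_y=\tfrac{7}{32}d\Big(1-\textstyle\sum_t\lambda_t^2\Big)\ \ge\ \tfrac{7}{32}dc,$$
using $\sum_t\lambda_t^2\le m\sum_t\lambda_t=m$ at the last step.

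I would then split according to the size of the heaviest coordinate. In the regime $m\le 1/2$, call an index $i$ \emph{good} when $\min\{a_i,b_i\}\ge c/8$ and let $g$ be the number of good indices. For a good $i$ one has $a_ib_i\le\tfrac14$, while for every other $i$ one has $a_ib_i\le\min\{a_i,b_i\}<c/8$ (since $\max\{a_i,b_i\}\le 1$). Substituting these two bounds into $\sum_i a_ib_i\ge\tfrac{7}{32}dc$ and solving the resulting linear inequality gives $g\ge\frac{3dc}{4(2-c)}$; since this expression is increasing in $c$ and $c\ge 1/2$ here, we get $g\ge d/4\ge d/6$, and each good index satisfies the assertion.

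In the complementary regime $m>1/2$, I would fix a coordinate $t^{\ast}$ with $\lambda_{t^{\ast}}=m$ and, for each $i$, let $M_i$ be the $\lambda$‑mass of the part of $(X_{i,0},X_{i,1})$ that does \emph{not} contain $t^{\ast}$. Then $M_i\le\sum_{t\neq t^{\ast}}\lambda_t=c<1/2$, so the part containing $t^{\ast}$ has mass $1-M_i>1/2>M_i$ and hence $\min\{a_i,b_i\}=M_i$. The same separation count now gives $\sum_i M_i=\sum_{t\neq t^{\ast}}\lambda_t\cdot\#\{i:(X_{i,0},X_{i,1})\text{ separates }t,t^{\ast}\}\ge\tfrac7{16}dc$, while $M_i\le c$ for every $i$. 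Letting $g$ count the indices with $M_i\ge c/8$ and again splitting off the "large" and "small" contributions, dividing by $c$ (the case $c=0$ being trivial, since then every index is good) yields $\tfrac7{16}d\le g+\tfrac{d-g}{8}$, i.e. $g\ge\tfrac5{14}d\ge d/6$; again each such index is as required.

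\textbf{Main obstacle.} The hard part will be that the estimate $\sum_i a_ib_i\ge\tfrac{7}{32}dc$ degenerates exactly when $m$ is close to $1$: there almost all of the $a_ib_i$ can be as small as $\Theta(c)$, and the first‑regime argument then recovers only an $\Omega(c)\cdot d$ fraction of good indices, which is insufficient. The fix, which is the crux, is to realise that for $m>1/2$ the object to track is the mass $M_i$ of the part avoiding the heaviest coordinate rather than the product $a_ib_i$, and to verify that $\min\{a_i,b_i\}$ equals $M_i$ in that regime, so that the two estimates dovetail over the whole range of $m$. (The constants $1/8$ and $d/6$ in the statement are met with room to spare; no optimization is attempted.)
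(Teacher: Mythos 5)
Your argument is correct, and I verified each step: the identity $\sum_i a_ib_i=\sum_{\{x,y\}}\lambda_x\lambda_y\cdot\#\{i \text{ separating } x,y\}$ together with the $\frac{1}{16}$-balanced hypothesis (at least $\frac{7}{16}d$ separating bipartitions per pair) gives $\sum_i a_ib_i\ge\frac{7}{32}dc$; the linear counting in the regime $\norm{\l}_\infty\le 1/2$ yields $g\ge\frac{3dc}{4(2-c)}\ge d/4$; and in the regime $\norm{\l}_\infty>1/2$ your observation that $\min\{a_i,b_i\}$ equals the mass $M_i$ of the part avoiding the heaviest coordinate, combined with $\sum_i M_i\ge\frac{7}{16}dc$ and $M_i\le c$, yields $g\ge\frac{5}{14}d$. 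Both bounds clear the stated $d/6$ threshold, and your diagnosis of why a single product-based estimate falls short near $\norm{\l}_\infty=1$ (it only gives $g\ge 3d/28<d/6$) is accurate. Note that the paper does not actually prove this lemma but imports it from~\cite{MoshkovitzSh14}, so your write-up serves as a correct self-contained proof; it is in the same second-moment, pair-separation spirit as the cited source, with the case split on $\norm{\l}_\infty$ doing the work that the reference's treatment of the heaviest atom does there.
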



\paragraph*{Common refinement.}
Henceforth, the common refinement of partitions $\Z,\X$ is denoted $\Z \cap \X$; that is,
$$\Z \cap \X = \{ Z \cap X \,:\, Z \in \Z,\, X \in \X \} \;.$$
We will need the definition of a regular partition when the partition is not necessarily equitable.
A vertex partition $\Z$ of an $n$-vertex graph is said to be
$\e$-regular if
$$\sum_{\substack{(Z,Z') \in \Z^2\\\text{ not $\e$-regular}}} |Z||Z'| \le \e n^2 \;.$$

\begin{claim}\label{claim:common-refinement}
	Let $\Z$ be an $\e$-regular partition of a graph $G$.
	For any partition $\X$ of order $k$, the common refinement $\Z \cap \X$ is
	a $\sqrt{8k \e}$-regular partition of $G$.
\end{claim}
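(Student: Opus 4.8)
The plan is to show that the ``bad'' pairs in $\Z \cap \X$ — those $(Z\cap X,\, Z'\cap X')$ that fail to be $\sqrt{8k\e}$-regular — contribute at most $\sqrt{8k\e}\,n^2$ to the weighted count $\sum |Z\cap X||Z'\cap X'|$. The two sources of bad pairs are (a) refinements of a pair $(Z,Z')$ that was already $\e$-irregular in $\Z$, and (b) refinements of a $\e$-regular pair $(Z,Z')$ where the subpieces $Z\cap X$, $Z'\cap X'$ are too small for regularity to be inherited. For (a), the total weight is bounded by $\sum_{(Z,Z')\text{ irreg}} |Z||Z'| \le \e n^2$, since the refined pieces of $(Z,Z')$ sum in weight to exactly $|Z||Z'|$. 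For (b), I would use the standard inheritance fact: if $(Z,Z')$ is $\e$-regular and $A\sub Z$, $B\sub Z'$ with $|A|\ge \d|Z|$, $|B|\ge\d|Z'|$, then $(A,B)$ is $\max\{\e/\d,\e\}$-regular (a version of Fact~\ref{fact:slice} with $\a=\d$, giving $2\e/\d$-regularity, which is $\le\sqrt{8k\e}$ once $\d \ge \sqrt{\e/2k}$ — I will pick the threshold $\d$ accordingly).

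So first I would set $\d = \sqrt{\e/(2k)}$ and call a piece $Z\cap X$ \emph{small} (within $Z$) if $|Z\cap X| < \d|Z|$. For a fixed $Z$, the small pieces have total size $< k\d|Z|$ since there are at most $k$ values of $X$. Hence, summing the weight of all pairs $(Z\cap X, Z'\cap X')$ in which at least one side is small: for fixed $(Z,Z')$ this is at most $2 k\d |Z||Z'|$ (bounding the contribution of ``left side small'' by $|Z'|\sum_{X\text{ small}}|Z\cap X| < k\d|Z||Z'|$, symmetrically for the right), and summing over all $(Z,Z')$ gives at most $2k\d\, n^2 = 2k\sqrt{\e/(2k)}\,n^2 = \sqrt{2k\e}\,n^2$. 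Every remaining bad pair comes from a $\e$-irregular $(Z,Z')$, contributing at most $\e n^2 \le \sqrt{2k\e}\,n^2$ in total (as $\e\le 1$, $k\ge1$). Adding the two contributions gives at most $2\sqrt{2k\e}\,n^2 \le \sqrt{8k\e}\,n^2$, as desired.

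The only genuinely delicate point is the inheritance constant: I need the ``large'' subpieces of a $\e$-regular pair to be $\sqrt{8k\e}$-regular, and Fact~\ref{fact:slice} (with $\a=\d=\sqrt{\e/(2k)}$, valid since $\d\ge\e$ when $k\le 1/(2\e)$, which we may assume as otherwise $\sqrt{8k\e}\ge 2$ and the claim is vacuous) yields $2\e/\d = 2\e/\sqrt{\e/(2k)} = 2\sqrt{2k\e} \le \sqrt{8k\e}$ — so the bound is exactly tight with the chosen threshold, and no slack is wasted. I expect the bookkeeping of ``at least one side small'' versus ``both sides large but the parent pair irregular'' to be the main place to be careful, but it is routine; everything else is a direct application of Fact~\ref{fact:slice} and the $\e$-regularity hypothesis on $\Z$.
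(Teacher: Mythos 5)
Your proposal is correct and follows essentially the same route as the paper's proof: the same threshold $\sqrt{\e/2k}$ for ``small'' pieces, the same split of the irregular weight into pieces coming from $\e$-irregular pairs of $\Z$ versus pieces with a small side, and the same application of the slicing fact (Fact~\ref{fact:sliceA}) to the large subpieces of regular pairs. Your explicit handling of the requirement $\a\ge\e$ in the slicing fact (dismissing the case $k>1/2\e$ as trivial) is a small point the paper leaves implicit, but otherwise the two arguments coincide.
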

\begin{proof}
	Put $\a = \sqrt{\e/2k}$ and $n=|V(G)|$.
	For each $\e$-regular pair $(Z,Z') \in \Z^2$, it follows from Fact~\ref{fact:sliceA} that for every $X,X' \in \X$ with $|Z \cap X| \ge \a|Z|$ and $|Z' \cap X'| \ge \a|Z'|$, the pair $(Z \cap X,\,Z' \cap X')$ is $\e'$-regular with $\e'=(2/\a)\e=\sqrt{8k \e}$.
	Call $Z \cap X \in \Z \cap \X$ \emph{small} if $|Z \cap X| < \a|Z|$.
	We have
	$$\sum_{\substack{A,A' \in \Z \cap \X:\\A \text{ small}}} |A||A'|
	\le \sum_{\substack{A \in \Z \cap \X:\\A \text{ small}}} |A|n
	= \sum_{\substack{Z \in \Z, X \in \X:\\|Z \cap X| < \a|Z|}} |Z \cap X| n
	\le \sum_{\substack{Z \in \Z, X \in \X:\\|Z \cap X| < \a|Z|}} \a|Z| n
	\le k \a n^2 \;.$$
	Call $(Z \cap X, Z' \cap X') \in (\Z \cap \X)^2$ \emph{bad} if $(Z,Z')$ is not $\e$-regular.
	Recall that if $(A,A')\in (\Z \cap \X)^2$ is not bad and $A,A'$ are both not small then $(A,A')$ is $\e'$-regular.
	Therefore, 	
	\begin{align*}
	\sum_{\substack{(A,A') \in (\Z \cap \X)^2\\\text{ not $\e'$-regular}}} |A||A'|
	&\le
	\sum_{\substack{A,A' \in \Z \cap \X:\\(A,A')\text{ bad}}} |A||A'|
	+\, 2\sum_{\substack{A,A' \in \Z \cap \X:\\A \text{ small}}} |A||A'| \\
	&\le \sum_{\substack{(Z,Z') \in \Z^2\\\text{ not $\e$-regular}}} |Z||Z'| + 2k\a n^2 \le (\e+2k\a)n^2 \le \e' n^2
	\;.
	\end{align*}
	This proves that $\Z \cap \X$ is $\e'$-regular, as needed.
\end{proof}


\paragraph*{Quasirandom graphs.}

A bipartite graph $G=(U,V;E)$ of density $p$ is said to be \emph{$(\e)$-regular} if all sets $A \sub U$, $B \sub V$ with $|A| \ge \e|U|$, $|B| \ge \e|V|$
satisfy 
\begin{equation}\label{eq:()-reg}
|d_G(A,B)-p| \le \e p \;.
\end{equation}

\begin{definition}[$(p,\d)$-quasirandom graph]
A regular bipartite graph $G=(U,V;E)$ of density $p$ is \emph{$(p,\d)$-quasirandom} if all but $\d |U|^2$ pairs $(u,u') \in U^2$ satisfy $\codeg(u,u') \le (1+\d) p^2 |V|$.
\end{definition}

As is well known, small codegree implies quasirandomness. However, we need a somewhat different version with specific parameters, which we prove below for completeness.

\begin{lemma}\label{lemma:sup-reg}
Every $(p,\e p)$-quasirandom graph is $(2\e^{1/7})$-regular.
\end{lemma}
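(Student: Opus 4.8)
The plan is to derive quasirandomness (i.e. the $(\e)$-regularity condition~(\ref{eq:()-reg})) from the codegree hypothesis via a standard second-moment / Cauchy--Schwarz argument, but carefully tracking constants so that the exponent $1/7$ comes out. First I would fix sets $A \sub U$, $B \sub V$ with $|A| \ge \eps'|U|$, $|B| \ge \eps'|V|$, where $\eps' := 2\eps^{1/7}$, and estimate $e_G(A,B)$ by counting, for each pair $u,u' \in A$, the quantity $|N(u) \cap N(u') \cap B|$. Summing over ordered pairs gives $\sum_{u,u' \in A} |N(u)\cap N(u')\cap B|$, which on one hand equals $\sum_{v \in B} \deg_A(v)^2$ (where $\deg_A(v) = |N(v)\cap A|$), and on the other hand is bounded above, using the codegree hypothesis, by roughly $|A|^2(1+\d)p^2|V| + \d|U|^2 \cdot |B|$ — the first term from the ``good'' pairs whose codegree into all of $V$ (hence into $B$) is at most $(1+\d)p^2|V|$, the second a crude bound of $|B|$ on the codegree of each of the at most $\d|U|^2$ ``bad'' pairs (including the diagonal pairs $u=u'$, which number $|A| \le |U|$, absorbed into the bad term since $|A| \le \d|U|^2$ for $n$ large, or handled separately as a lower-order term).

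Next I would apply Cauchy--Schwarz: $\big(\sum_{v\in B}\deg_A(v)\big)^2 \le |B|\sum_{v\in B}\deg_A(v)^2$, and note $\sum_{v\in B}\deg_A(v) = e_G(A,B) = d_G(A,B)|A||B|$. Combining with the upper bound gives
\begin{equation*}
d_G(A,B)^2|A|^2|B|^2 \le |B|\Big(|A|^2(1+\d)p^2|V| + \d|U|^2|B|\Big),
\end{equation*}
and, writing $d = d_G(A,B)$, dividing through by $|A|^2|B|^2$ yields
\begin{equation*}
d^2 \le (1+\d)p^2\frac{|V|}{|B|} + \d\frac{|U|^2}{|A|^2|B|}.
\end{equation*}
Here I use $\d = \eps p$ from the hypothesis. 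To get a matching \emph{lower} bound on $d$ I would run the same argument on the complement within the pair, or more simply exploit that the number of edges cannot be too small: apply the same codegree bound but now observe that $e_G(A,B) = e_G(A,V) - e_G(A,V\setminus B)$, and since $G$ is regular (density $p$) we have reasonably tight control on $e_G(A,V)$; alternatively, a cleaner route is to bound $d$ away from $0$ by noting $e_G(A,V) = p|A||V|$ exactly and $e_G(A,V\setminus B) \le $ (something controlled by regularity), but since the hypothesis only gives that $G$ is ``regular'' without quantifying, I would instead use the more robust route: the inequality above already forces $d \le p(1 + O(\eps^{1/7}))$, and symmetrically, applying the codegree/second-moment bound with the roles arranged to lower-bound the edge count (using that a set of density much below $p$ on both sides would make $\sum_v \deg_A(v)^2$ too small to be consistent with $\sum_v \deg_A(v)$ being forced large by double counting over all of $U$), one gets $d \ge p(1 - O(\eps^{1/7}))$.

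The bookkeeping is where the exponent is pinned down. With $|A| \ge \eps'|U|$ and $|B| \ge \eps'|V|$ the error terms become $(1+\eps p)p^2/\eps' + \eps p \cdot (1/\eps'^3) \cdot (|U|/|V|)$ — and here I would note that for the $(p,\d)$-quasirandom notion to be meaningful one takes $|U| = |V| = n$ (the construction in the paper is regular bipartite with balanced sides), so the second error term is $\eps p/\eps'^3 = \eps p/(8\eps^{3/7}) = p\eps^{4/7}/8$. Thus $d^2 \le p^2(1 + \eps p)/\eps' + p\eps^{4/7}/8$; this is clearly not yet of the form $p^2(1\pm\eps')^2$ — so I must have mis-set the normalization, and the correct move is to \emph{not} divide by $|B|$ alone but to compare against the ``expected'' count, i.e. estimate $\sum_{v\in B}(\deg_A(v) - p|A|)^2$ directly. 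Expanding, this equals $\sum_v \deg_A(v)^2 - 2p|A|\sum_v\deg_A(v) + p^2|A|^2|B|$, and the first term is $\le |A|^2(1+\d)p^2|V| \cdot \tfrac{|B|}{|V|}$-type bound — more precisely I would bound $\sum_{u,u'\in A}|N(u)\cap N(u')\cap B| \le |A|^2(1+\d)p^2|B| + \d n^2|B| + |A|\cdot|B|$ using that a good pair has codegree into $B$ at most its codegree into $V$ times... no: the codegree bound is into $V$, not into $B$, so the honest bound is codegree into $B$ $\le$ codegree into $V$ $\le (1+\d)p^2|V|$, giving $|A|^2(1+\d)p^2|V|$, not $|A|^2(1+\d)p^2|B|$; this asymmetry ($|V|$ vs $|B|$) is exactly why one needs $|B| \ge \eps'|V|$ to convert it back. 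So the clean inequality is $\sum_{v\in B}\deg_A(v)^2 \le |A|^2(1+\d)p^2|V| + \d n^2 |B| + |A||B|$, and then $\sum_{v\in B}(\deg_A(v)-p|A|\tfrac{|B|}{|V|}\cdot\tfrac{|V|}{|B|})^2$...

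I will stop the calculation here, as promised. The structure is: (1) double-count codegrees over $A\times A$ using the quasirandom hypothesis; (2) Cauchy--Schwarz over $B$ to pass from $\sum\deg_A(v)^2$ to $e_G(A,B)^2$; (3) use $|A|\ge 2\eps^{1/7}|U|$, $|B|\ge 2\eps^{1/7}|V|$ and $\d = \eps p$ to see every error term is $\le \eps^{1/7}$ times the main term $p^2|A|^2|B|^2/(\text{norm})$, which after taking square roots yields $|d_G(A,B) - p| \le 2\eps^{1/7} p$; and (4) a symmetric lower-bound argument (or a complementation trick using that $G$ is already assumed \emph{regular}, so the total edge count is exactly $p\cdot$, forcing the deficiency on $A\times(V\setminus B)$ to be bounded) to get the matching lower inequality. \textbf{The main obstacle} I anticipate is precisely this normalization mismatch — the codegree hypothesis controls codegrees into all of $V$, while $(\e)$-regularity asks about densities into a \emph{sub}set $B$ — and making the seven-sevenths arithmetic work out requires being slightly clever about how the losses from $|B|/|V|$, from the bad-pair term $\d n^2$, and from the diagonal term $|A|$ are each charged against $\eps^{1/7}$; I would budget, say, $\eps^{1/7}$ to each of a bounded number of sources and verify the constant $2$ suffices.
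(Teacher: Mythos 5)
You correctly identified the crux — the codegree hypothesis controls codegrees into all of $V$ while $(\e)$-regularity asks about a subset $B$ — but your proposal does not get past it: the calculation is explicitly abandoned mid-formula, and the route you started down is quantitatively a dead end. Bounding $\sum_{v\in B}\deg_A(v)^2$ by $\sum_{v\in V}\deg_A(v)^2 \le |A|^2(1+\d)p^2|V|+\dots$ and then applying Cauchy--Schwarz over $B$ can only ever yield $d_G(A,B)^2 \lesssim p^2|V|/|B| = p^2/\b$, i.e.\ $d_G(A,B) \lesssim p/\sqrt{\b}$, which is far weaker than the required $d_G(A,B)\le(1+2\e^{1/7})p$ once $\b$ is small. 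Moreover Cauchy--Schwarz in this direction gives only an upper bound on $e_G(A,B)$; your sketch of the matching lower bound (``a symmetric argument or a complementation trick'') is not worked out, and the complementation trick fails for the same reason if fed only the lossy upper bound.

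The repair you half-name — compare $\deg_A(v)$ to its mean rather than dividing by $|B|$ — is exactly the paper's proof, but it has to be carried out over all of $V$, not over $B$. Set $D=e(v,A)$ for $v$ uniform in $V$. Degree-regularity of $G$ gives the first moment exactly, $\Ex[D]=p|A|$, and the codegree hypothesis (bad pairs contribute at most $\e p|U|^2\cdot p|V|$, since each codegree is at most $p|V|$ by regularity) gives $\Ex[D^2]\le p^2|A|^2(1+2\e/\a^2)$, hence $\Var[D]\le (2\e/\a^2)p^2|A|^2$. Chebyshev then shows that at most $(2\e/\l^2\a^2)|V|$ vertices $v$ have $|\deg_A(v)-p|A||\ge\l p|A|$; summing $\deg_A(v)$ over $v\in B$ and discarding (for the lower bound) or crudely bounding (for the upper bound, via $\deg_A(v)\le p|U|$ again by regularity) these exceptional vertices gives \emph{both} inequalities at once, and optimizing $\l\approx(\e/\a^3\b)^{1/3}$ yields $|d_G(A,B)-p|\le 3(\e/\a^3\b)^{1/3}p$, whence $\a,\b\ge 2\e^{1/7}$ gives the stated $2\e^{1/7}p$. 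None of this — the variance computation, the Chebyshev step, the use of degree-regularity on \emph{both} sides, or the optimization pinning down the $1/7$ — appears in your write-up, so as it stands the proof is incomplete.
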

\begin{proof}
Let the bipartite graph $G=(U,V;E)$ be $(p,\e p)$-quasirandom.
We will prove that
all sets $A \sub U$, $B \sub V$ of size $|A|=\a|U|$, $|B|=\b|V|$ satisfy
$$|d_G(A,B) - p| \le 3(\e/\a^3\b)^{1/3} p \;.$$
This would complete the proof since $\a,\b \ge 2\e^{1/7}$ would imply
$|d_G(A,B) - p| \le 2\e^{1/7}p$, as needed.
Let $D=e(v,A)$ where $v$ is chosen uniformly at random from $V$.
Then
$$\Ex[D] = \frac{1}{|V|}\sum_{v \in V} e(v,A) = \frac{1}{|V|}\sum_{u \in A} \deg_G(u) = p|A| \;,$$
where in the penultimate equality we used the assumption that the $U$ side is regular.
Moreover,
\begin{align*}
\Ex[D^2] &= \frac{1}{|V|}\sum_{v \in V} e(v,A)^2
= \frac{1}{|V|} \sum_{u,u' \in A} \codeg(u,u')
\le \e p |U|^2 \cdot p +  \sum_{u,u' \in A} (1+\e p)p^2\\
&= p^2|A|^2 (\e/\a^2 + 1 + \e p) \le p^2|A|^2 (1 + 2\e/\a^2) \;,
\end{align*}
where in the first inequality we used the fact that $G$ is $(p,\e p)$-quasirandom together with the regularity of the $U$ side.
It follows that
$$\Var[D] = \Ex[D^2]-\Ex[D]^2 \le p^2|A|^2(1+2\e/\a^2) - (p|A|)^2 = (2\e/\a^2) p^2|A|^2 \;.$$
By Chebyshev's inequality, for any $\l>0$ we have
$$\Pr\Big(\big| D-p|A| \big| \ge \l p|A|\Big) \le \frac{\Var[D]}{(\l p|A|)^2} \le \frac{2\e}{\l^2\a^2} \;,$$
and since $e_G(A,B) = \sum_{v \in B} \deg_A(v)$, we have
$$(|B|-(2\e/\l^2\a^2)|V|) \cdot (1-\l)p|A|  \le e_G(A,B) \le (2\e/\l^2\a^2)|V| \cdot p|U| + |B| \cdot (1+\l)p|A| \;,$$
where in the right inequality we used the assumption that the $V$ side is regular.
Therefore,
$$(1-2\e/\l^2\a^2\b) \cdot (1-\l)p \le d_G(A,B) \le (2\e/\l^2\a^3\b+ 1+\l)p \;,$$
implying that
$$(1-\l - \l^{-2} \cdot 2\e/\a^2\b)p \le d_G(A,B) \le (1+\l + \l^{-2} \cdot 2\e/\a^3\b)p \;.$$
Taking $\l = (2\e/\a^2\b)^{1/3}$ for the lower bound and $\l = (2\e/\a^3\b)^{1/3}$ for the upper bound implies
$$(1-2(2\e/\a^2\b)^{1/3})p \le d_G(A,B) \le (1+2(2\e/\a^3\b)^{1/3})p \;.$$
In particular, this implies the desired bound,
$$|d_G(A,B) - p| \le 3(\e/\a^3\b)^{1/3} \cdot p \;,$$
which completes the proof.
\end{proof}

\subsection{Modified blow-up}\label{subsec:blow-up}

Here we show how to execute the iterative process described in Subsection~\ref{subsec:LB-overview}, given the pseudorandom bipartitions constructed in Subsection~\ref{subsec:LB-pre}.

Let $G$ be a $d$-regular graph.
Let $n \in \N$, $\a,\b\in[0,1]$ be such that there exists an $(n,d,\a,\b)$-sequence.
We define $G(n,d,\a,\b)$ as any graph obtained as follows, where here we use $N(x)$ to denote the neighbors of vertex $x$ in $G$.
We first replace each vertex $x$ of $G$ by a set $X$ of $n$ new vertices.
For this paragraph, if $y \in N(x)$ and we replaced $x$ with $X$ and $y$ with $Y$ then we will say that $Y \in N(X)$.
For each $X$ and $Y \in N(X)$, we \emph{associate} with $Y$ a bipartition $(X_{Y,0},X_{Y,1})$ of $X$, so that the sequence of bipartitions
$\{(X_{Y,0},X_{Y,1})\}_{Y\in N(X)}$ is an $(n,d,\a,\b)$-sequence.
For each edge $e=(x,y)$ of $G$ we do either one of the following:
\begin{enumerate}
\item we put two copies of $K_{n,n}$, between $(X_{Y,0},Y_{X,0})$ and between $(X_{Y,1},Y_{X,1})$, or
\item we put two copies of $K_{n,n}$, between $(X_{Y,0},Y_{X,1})$ and between $(X_{Y,1},Y_{X,0})$.
\end{enumerate}
We note that for the proof of Theorem~\ref{theo:LB2} the reader may assume that choice~(i) is used for all edges; both choices will be used in Subsection~\ref{subsec:LB-colors}.
Since $v(G(n,d,\a,\b)) = v(G) \cdot n$ and since $G(n,d,\a,\b)$ is $d \cdot \frac12 n$-regular, we have
\begin{equation}\label{eq:density0}
d_{G(n,d,\a,\b)}=\frac12 d_G \;.
\end{equation}

\begin{claim}\label{claim:density-max}
Let $G'=G(n,d,\a,\b)$.
Let $x \neq y, w \in V(G)$, where $w$ is a common neighbor of $x,y$,
and denote by $X,Y,W$ the sets replacing them in $G'$, respectively.
For every $x' \in X$ and $\ell \in \{0,1\}$ we have $e_{G'}(x',W_{Y,\ell}) \le (\frac14 + \a)n$.
\end{claim}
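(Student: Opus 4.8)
The plan is to reduce the claim to a single application of the $\a$-orthogonality of the bipartition sequence attached to the set $W$. The point is that the neighborhood of $x'$ inside $W$ is, by construction, exactly one block of the bipartition of $W$ associated with $X$, so $e_{G'}(x',W_{Y,\ell})$ is just the size of an intersection of two blocks coming from two \emph{distinct} bipartitions in that sequence.

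First I would record the bookkeeping. Since $w$ is a common neighbor of $x$ and $y$ in $G$, in the notation of the construction we have $X,Y\in N(W)$, so the set $W$ carries both bipartitions $(W_{X,0},W_{X,1})$ and $(W_{Y,0},W_{Y,1})$, and these are two members of the $(n,d,\a,\b)$-sequence of equitable bipartitions of $W$; moreover $X\neq Y$ because $x\neq y$, so these are two \emph{distinct} members of that sequence. Next I would trace through the edge $e=(x,w)$ of $G$: by construction the edges of $G'$ between $X$ and $W$ are exactly those of two copies of $K_{n,n}$, placed either between $(X_{W,0},W_{X,0})$ and between $(X_{W,1},W_{X,1})$, or between $(X_{W,0},W_{X,1})$ and between $(X_{W,1},W_{X,0})$. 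In either case every vertex of $X$ lies in exactly one block $X_{W,t}$ and its neighborhood in $W$ is precisely one of $W_{X,0},W_{X,1}$. Hence for the given $x'\in X$ there is some $\ell_1\in\{0,1\}$ with $N_{G'}(x')\cap W = W_{X,\ell_1}$, and therefore
$$e_{G'}(x',W_{Y,\ell}) = |N_{G'}(x')\cap W_{Y,\ell}| = |W_{X,\ell_1}\cap W_{Y,\ell}| \;.$$
Finally, applying~\eqref{eq:orthogonal-def} to the distinct members $(W_{X,0},W_{X,1})$ and $(W_{Y,0},W_{Y,1})$ of the $\a$-orthogonal sequence attached to $W$ (with the two indices $\ell_1,\ell$) gives $|W_{X,\ell_1}\cap W_{Y,\ell}|\le(\frac14+\a)|W|=(\frac14+\a)n$, which is exactly the assertion.

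There is no real obstacle here beyond keeping the indices straight; the one substantive point to flag is that $x\neq y$ is precisely what forces $X\neq Y$, so that genuine $\a$-orthogonality applies rather than the trivial bound $|W|$ one would get from intersecting a bipartition with itself.
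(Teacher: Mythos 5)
Your proof is correct and is essentially identical to the paper's: both identify $N_{G'}(x')\cap W$ as a single block $W_{X,\ell'}$ of the bipartition of $W$ associated with $X$, and then apply the $\a$-orthogonality of the bipartition sequence on $W$ to the two distinct bipartitions indexed by $X$ and $Y$. Your extra remark that $x\neq y$ is what guarantees the two bipartitions are distinct is exactly the (implicit) point the paper relies on.
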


\begin{proof}
By construction, the set of neighbors of $x'$ in $W$ is precisely $W_{X,\ell'}$ for some $\ell' \in \{0,1\}$.
This implies that $e_{G'}(x',W_{Y,\ell}) = |W_{X,\ell'} \cap W_{Y,\ell}| \le (\frac14 + \a)n$, where in the inequality we used the fact that $W_{X,\ell'}$ and $W_{Y,\ell}$ belong to two distinct bipartitions in an $\a$-orthogonal sequence of bipartitions.
\end{proof}

\newcommand{\Gc}{G^{\circ}}

\begin{claim}\label{claim:typical}
Let $G'=G(n,d,\a,\b)$.
If $G$ is $(p,\e)$-quasirandom then any blow-up of $G'$ is $(\frac12 p, \e')$-quasirandom with $\e'=\e+\max\{8\a,\, 2/v(G)\}$.
\end{claim}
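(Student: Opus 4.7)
The plan is to first establish the claim for $G'$ itself, then extend to uniform blow-ups. By~(\ref{eq:density0}), $G'$ is regular of density $p/2$ by construction, so only the codegree condition needs verification.

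Consider a pair $(x', y') \in U' \times U'$, where $U'$ denotes the side of $G'$ corresponding to $U \sub V(G)$, and write $x' \in X$, $y' \in Y$ for the blow-up sets containing them. The main case is $X \neq Y$. Any common neighbor of $x'$ and $y'$ in $V'$ lies in a blow-up set $W$ of some common neighbor of $X, Y$ in $G$. By the construction of $G(n,d,\a,\b)$, the neighborhood of $x'$ in $W$ equals $W_{X,\ell_1}$ for some $\ell_1 \in \{0,1\}$, and similarly for $y'$ we get $W_{Y,\ell_2}$. Since $X \neq Y$, the bipartitions $(W_{X,0}, W_{X,1})$ and $(W_{Y,0}, W_{Y,1})$ are distinct members of the $\a$-orthogonal sequence associated with $W$, so~(\ref{eq:orthogonal-def}) gives $|W_{X,\ell_1} \cap W_{Y,\ell_2}| \le (\tfrac14 + \a)n$. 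Summing over common neighbors of $X, Y$ in $G$ yields
\[
\codeg_{G'}(x', y') \le \codeg_G(X, Y) \cdot (\tfrac14 + \a)n.
\]

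Applying the $(p, \e)$-quasirandomness of $G$, all but $\e|U|^2$ pairs $(X, Y)$ satisfy $\codeg_G(X, Y) \le (1+\e)p^2|V|$, so for the corresponding ``good'' $(x', y')$, using $|V'|=n|V|$,
\[
\codeg_{G'}(x', y') \le (1+\e)(1+4\a)(p/2)^2|V'| \le (1 + \e + 8\a)(p/2)^2|V'|,
\]
where the second inequality uses $\e \le 1$ (else the claim is trivial). The ``bad'' pairs are those with $X = Y$ (at most $|U|n^2$ pairs) or those arising from a bad $(X, Y)$ for $G$'s quasirandomness (at most $\e|U|^2 n^2$ pairs), so the bad fraction out of $|U'|^2 = |U|^2 n^2$ pairs is at most $\e + 1/|U| = \e + 2/v(G)$, using $|U| = v(G)/2$. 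Taking $\e' = \e + \max\{8\a, 2/v(G)\}$ absorbs both the codegree slack and the bad fraction, proving the claim for $G'$.

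To extend to any uniform blow-up of $G'$ by factor $t$, note that codegrees scale by $t$ while $|V'|$ is replaced by $t|V'|$, preserving the codegree inequality; each pair in $G'$ inflates to $t^2$ pairs, preserving the bad-pair fraction; and regularity together with density $p/2$ carries over. The main subtleties are the case split between $X = Y$ (contributing the $2/v(G)$ term) and $X \neq Y$ (handled via $\a$-orthogonality), and absorbing the slack factor $(1+\e)(1+4\a)$ into the single additive $8\a$ via $\e \le 1$.
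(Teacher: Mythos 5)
Your proof is correct and follows essentially the same route as the paper: bound $\codeg_{G'}(x',y')$ by $(\tfrac14+\a)n\cdot\codeg_G(X,Y)$ via $\a$-orthogonality (the paper packages this as Claim~\ref{claim:density-max}, which you reprove inline), then invoke the $(p,\e)$-quasirandomness of $G$ and count the bad pairs ($X=Y$ contributing $2/v(G)$, bad $G$-pairs contributing $\e$). The only cosmetic difference is that you prove the statement for $G'$ first and then scale to an arbitrary blow-up, whereas the paper treats the blow-up $\Gc$ directly with a single blow-up factor $k$; the two are equivalent.
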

\begin{proof}
Let $\Gc$ be a blow-up of $G'$, and note that $d_{\Gc} = \frac12 p$ follows from~(\ref{eq:density0}).
Put $G=(U,V;E)$ and $\Gc=(U',V';E')$, and put $|U'|/|U| = |V'|/|V| = k$.
Suppose $u,v \in V(\Gc)$ lie in the blow-up of $x,y \in V(G)$, respectively,
with $x \neq y \in U$.
We claim that
$$\codeg_{\Gc}(u,v) \le \Big(\frac14+\a\Big)k \cdot \codeg_G(x,y) \;.$$
This would imply that all but
$$\e |U|^2 k^2 + |U'|^2/|U| = |U'|^2(\e + 1/|U|) = |U'|^2(\e + 2/v(G))$$
pairs $(u,v) \in U'^2$ satisfy
$$\codeg_{\Gc}(u,v) \le (\frac14+\a)k \cdot (1+\e)p^2|V| = (1+4\a)(1+\e)\big(\frac12 p\big)^2|V'| \le (1+\e+8\a)\big(\frac12 p\big)^2 |V'| \;,$$
which would complete the proof.

To prove the claim above, first note that if a vertex of $\Gc$ that lies in the blow-up of $w \in V(G)$ is a common neighbor of $u$ and $v$ in $\Gc$ then, by construction, $w$ must be a common neighbor of $x$ and $y$ in $G$.
It follows from Claim~\ref{claim:density-max} that the number of common neighbors of $u$ and $v$ in the blow-up of $w$ is at most $(\frac14+\a)k$.
This implies that $\codeg_{\Gc}(u,v) \le (\frac14+\a)k \cdot \codeg_G(x,y)$, proving our claim above.
\end{proof}


\paragraph*{Iterated modified blow-up.}
Let $G$ be a $d_0$-regular graph.
Let $n_i \in \N$, $\a_i,\b_i \in [0,1]$ be such that for every $1 \le i \le r$ there exists an $(n_i,d_{i-1},\a_i,\b_i)$-sequence where $d_{i-1}=d_0\prod_{j=1}^{i-1} (n_j/2)$. 
For every $1 \le i \le r$ put $\rho_i=(n_i,d_{i-1},\a_i,\b_i)$.
We define $G(\rho_1,\ldots,\rho_r)$ as any graph recursively obtained as
$$G(\rho_1,\ldots,\rho_i) = [G(\rho_1,\ldots,\rho_{i-1})](n_i,d_{i-1},\a_i,\b_i) \;,$$
with $G$ as the base case. This is well defined since for every $1 \le i \le r$ the graph $G(\rho_1,\ldots,\rho_{i-1})$ is $d_{i-1}$-regular.
We have the following by~(\ref{eq:density0}).

\begin{fact}\label{fact:density}
The bipartite graph $K_{n_0,n_0}(\rho_1,\ldots,\rho_r)$ is regular of density $1/2^r$.
\end{fact}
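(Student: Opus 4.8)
The plan is a short induction on $r$ whose only nontrivial ingredient is equation~(\ref{eq:density0}). The key observation is that a single application of the modified blow-up $G \mapsto G(n,d,\a,\b)$ preserves both properties asserted in the statement: as recorded right after that construction was introduced, if $G$ is $d$-regular then $G(n,d,\a,\b)$ is $(d\cdot\tfrac12 n)$-regular, and by~(\ref{eq:density0}) its density is exactly half that of $G$. So the whole statement reduces to tracking what happens to the base graph $K_{n_0,n_0}$ under $r$ iterations.

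Concretely, for the base case $r=0$ the graph $K_{n_0,n_0}(\,)=K_{n_0,n_0}$ is $n_0$-regular and has bipartite density $1=1/2^0$. For the inductive step, suppose $H:=K_{n_0,n_0}(\rho_1,\ldots,\rho_{r-1})$ is $d_{r-1}$-regular of density $1/2^{r-1}$, where $d_{r-1}=n_0\prod_{j=1}^{r-1}(n_j/2)$ is exactly the quantity appearing in the definition of the iterated blow-up. By definition $K_{n_0,n_0}(\rho_1,\ldots,\rho_r)=H(n_r,d_{r-1},\a_r,\b_r)$, which is well defined since an $(n_r,d_{r-1},\a_r,\b_r)$-sequence is assumed to exist and $H$ is $d_{r-1}$-regular by the induction hypothesis. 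By the observation of the previous paragraph this graph is $(d_{r-1}\cdot\tfrac12 n_r)=d_r$-regular, and its density is $\tfrac12\cdot 1/2^{r-1}=1/2^r$. This closes the induction and proves the fact.

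I do not expect any genuine obstacle: the statement is essentially a bookkeeping consequence of~(\ref{eq:density0}) together with the degree count made when the blow-up was defined. The one point that deserves a moment's care is the normalization of ``density'': here, as throughout Section~\ref{sec:LB}, all graphs are bipartite and ``density $p$'' means the bipartite density $e(U,V)/|U||V|$, so that $K_{n_0,n_0}$ has density $1$ and the powers of two line up as claimed; one should just make sure that~(\ref{eq:density0}) is invoked in this same normalization at every step (which it is, since it was proved there for bipartite graphs).
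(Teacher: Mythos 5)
Your proposal is correct and follows essentially the same route as the paper, which simply notes that the fact follows by iterating~(\ref{eq:density0}) together with the degree count $d\cdot\tfrac12 n$ recorded when the modified blow-up was defined; your induction just spells this bookkeeping out, and your remark about the bipartite density normalization is the right reading of the statement.
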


In order prove the $(\e)$-regularity of an iterated modified blow-up, we analyze the effect of each iteration on its $(\e,p)$-quasirandomness, and then finally apply Lemma~\ref{lemma:sup-reg}.
\begin{claim}\label{claim:prop-super}
Any blow-up of $K_{n_0,n_0}(\rho_1,\ldots,\rho_r)$ is $(1/n_0^{1/14})$-regular, provided $\a_i \le 1/(8n_0 \cdots n_{i-1})$ for every $1 \le i \le r$ and $n_0 \ge 4^{r+8}$.
\end{claim}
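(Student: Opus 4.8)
The plan is to prove Claim~\ref{claim:prop-super} by induction on $r$, tracking the $(p,\delta)$-quasirandomness parameter through each application of the modified blow-up and converting to $(\e)$-regularity only at the very end via Lemma~\ref{lemma:sup-reg}. Let $H_i = K_{n_0,n_0}(\rho_1,\ldots,\rho_i)$ for $0 \le i \le r$, so $H_0 = K_{n_0,n_0}$ and $H_i = H_{i-1}(n_i,d_{i-1},\a_i,\b_i)$. By Fact~\ref{fact:density}, $H_i$ is regular of density $p_i = 2^{-i}$. The base case is $H_0 = K_{n_0,n_0}$, which is trivially $(1, 0)$-quasirandom: every pair $(u,u')$ has codegree exactly $n_0 = p_0^2 n_0$. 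For the inductive step I would apply Claim~\ref{claim:typical}: if $H_{i-1}$ is $(p_{i-1}, \e_{i-1})$-quasirandom, then $H_i = H_{i-1}(n_i, d_{i-1}, \a_i, \b_i)$ is $(p_i, \e_i)$-quasirandom with $\e_i = \e_{i-1} + \max\{8\a_i,\, 2/v(H_{i-1})\}$ — and crucially, since the claim's conclusion is about \emph{any} blow-up of $H_{i-1}(n_i,\ldots)$, and since $H_{i-1}(n_i,\ldots)$ is itself a blow-up of $H_{i-1}$ composed with the modified blow-up construction, the hypothesis of Claim~\ref{claim:typical} needs to be checked at the level of $H_{i-1}$; I would verify that $H_{i-1}$ being $(p_{i-1},\e_{i-1})$-quasirandom is exactly what is needed. (One should double-check the logical fit here: Claim~\ref{claim:typical} says ``if $G$ is $(p,\e)$-quasirandom then any blow-up of $G(n,d,\a,\b)$ is $(p/2,\e')$-quasirandom''; applying this with $G = H_{i-1}$ gives that any blow-up of $H_i$ is $(p_i,\e_i)$-quasirandom, which is precisely the inductive conclusion I want to carry forward.)

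Next I would bound the accumulated error. Summing the recursion, a blow-up of $H_r$ is $(2^{-r}, \e_r)$-quasirandom with
\begin{equation*}
\e_r = \sum_{i=1}^r \max\Big\{8\a_i,\ \frac{2}{v(H_{i-1})}\Big\} \;.
\end{equation*}
For the first term, the hypothesis $\a_i \le 1/(8 n_0 \cdots n_{i-1})$ gives $8\a_i \le 1/(n_0 \cdots n_{i-1}) \le n_0^{-i}$ (since each $n_j \ge n_0$ — actually each blow-up only increases part sizes, so $n_j \ge n_0$ holds; if not, I would instead use $n_0\cdots n_{i-1}\ge n_0$). For the second term, $v(H_{i-1}) = v(K_{n_0,n_0}) \cdot n_1 \cdots n_{i-1} = 2n_0 n_1 \cdots n_{i-1} \ge 2n_0$, so $2/v(H_{i-1}) \le 1/n_0$. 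Hence each summand is at most $1/n_0$, giving $\e_r \le r/n_0$. Since $n_0 \ge 4^{r+8}$, in particular $r \le \log_4 n_0 \le n_0^{1/2}$ (very crudely), so $\e_r \le r/n_0 \le n_0^{-1/2}$; more carefully I would aim for $\e_r \le n_0^{-7/8}$ or whatever precise bound makes the final step work, using $r \le \log_4 n_0$ together with $n_0 \ge 4^{r+8}$.

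Finally I would apply Lemma~\ref{lemma:sup-reg}: a $(p,\e p)$-quasirandom graph is $(2\e^{1/7})$-regular. Writing $\e_r = \e' \cdot p_r$ where $\e' = \e_r \cdot 2^r = \e_r / p_r$, I get that any blow-up of $H_r$ is $(2 (\e')^{1/7})$-regular. So I need $2(\e_r 2^r)^{1/7} \le n_0^{-1/14}$, i.e. $\e_r 2^r \le (n_0^{-1/14}/2)^7 = 2^{-7} n_0^{-1/2}$. Using $\e_r \le r/n_0$ and $r \le \log_4 n_0$ and $2^r \le 4^r \le n_0$ (wait — $2^r \le \sqrt{4^r} \le \sqrt{n_0/4^8}$, so $2^r \le n_0^{1/2}/4^4$), this gives $\e_r 2^r \le (r/n_0)(n_0^{1/2}/256) = r/(256 n_0^{1/2}) \le \log_4(n_0)/(256 n_0^{1/2})$, which is indeed at most $2^{-7}n_0^{-1/2}$ for $n_0$ large (and the hypothesis $n_0 \ge 4^{r+8}$ forces $n_0$ to be at least $4^8$, large enough after checking constants). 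The main obstacle I anticipate is getting these constant/exponent bookkeeping steps to close cleanly — in particular making sure the crude estimate $2^r \le n_0^{1/2}/4^4$ genuinely follows from $n_0 \ge 4^{r+8}$ (it does: $n_0 \ge 4^{r+8}$ gives $4^r \le n_0/4^8$, so $2^r = \sqrt{4^r} \le \sqrt{n_0}/4^4$), and verifying that Claim~\ref{claim:typical}'s ``any blow-up'' phrasing composes correctly across iterations so that the induction is actually valid rather than circular.
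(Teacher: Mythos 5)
Your strategy is the same as the paper's: start from the $(1,0)$-quasirandom $K_{n_0,n_0}$, iterate Claim~\ref{claim:typical} (with Fact~\ref{fact:density} giving the densities $2^{-i}$), accumulate the quasirandomness error, and convert to $(\e)$-regularity at the end with Lemma~\ref{lemma:sup-reg}; your worry about the ``any blow-up'' phrasing is unfounded, since $H_i$ is itself a (trivial) blow-up of $H_{i-1}(n_i,d_{i-1},\a_i,\b_i)$, so each application with $G=H_{i-1}$ hands the induction the quasirandomness of $H_i$, and the last application covers an arbitrary blow-up of $H_r$. The genuine gap is in your closing arithmetic. You settle for $\e_r \le r/n_0$, and then from $2^r \le \sqrt{n_0}/2^8$ you get $\e_r 2^r \le (r/2)\cdot 2^{-7} n_0^{-1/2}$, while the target is $\e_r 2^r \le 2^{-7} n_0^{-1/2}$. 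Your claim that $\log_4(n_0)/(256\sqrt{n_0}) \le 2^{-7}/\sqrt{n_0}$ holds ``for $n_0$ large'' is false: the $n_0^{-1/2}$ factors cancel and the inequality reads $\log_4 n_0 \le 2$, i.e.\ $n_0 \le 16$, which contradicts $n_0 \ge 4^{r+8}$. So as written the argument only closes for $r\le 2$, and there is no slack to absorb the extra factor $r$, because the hypothesis $n_0 \ge 4^{r+8}$ is used exactly to turn $2^{r+8}/n_0$ into $n_0^{-1/2}$.

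The fix is the paper's geometric-sum bound rather than the crude per-term bound $1/n_0$. Note $8\a_i \le 1/(n_0\cdots n_{i-1})$ and $2/v(H_{i-1}) = 1/(n_0\cdots n_{i-1})$ exactly (since $v(H_{i-1})=2n_0\cdots n_{i-1}$), and each $n_j$ with $j\ge 1$ is an even positive integer, so $n_j \ge 2$ and $1/(n_0\cdots n_{i-1}) \le 2^{-(i-1)}/n_0$. Hence $\e_r \le \sum_{i=1}^r 2^{-(i-1)}/n_0 \le 2/n_0$, so $\e' = \e_r 2^r \le 2^{r+1}/n_0$ and $\e''=2(\e')^{1/7} = (2^{r+8}/n_0)^{1/7} \le (n_0^{-1/2})^{1/7} = n_0^{-1/14}$, using $2^{r+8}\le\sqrt{n_0}$; this closes with no room to spare. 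Your first instinct, $n_j \ge n_0$, would also do the job and is true for the concrete construction in Subsection~\ref{subsec:Construction}, but it is not among the hypotheses of the claim as stated, and it is not needed: $n_j \ge 2$ suffices.
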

\begin{proof}
By definition, $K_{n_0,n_0}$ is $(1,0)$-quasirandom.
By Claim~\ref{claim:typical} and Fact~\ref{fact:density}, any blow-up $H$ of $K_{n_0,n_0}(\rho_1,\ldots,\rho_r)$ is $(p,\e)$-quasirandom with $p=1/2^r$ and
$$\e \le \sum_{i=1}^r 1/(n_0 \cdots n_{i-1}) \le
(1/n_0)\sum_{i=1}^r 1/2^{i-1} \le 2/n_0 \;,$$
where in the first inequality we used the fact that $v(K_{n_0,n_0}(\rho_1,\ldots,\rho_{r-1})) = 2n_0 \cdots n_{i-1}$ and in the second inequality we used the fact that $n_j \ge 2$ for $j \ge 1$.
It follows from Lemma~\ref{lemma:sup-reg} that, since  
$H$ is $(p, \e' p)$-quasirandom with $\e'=2^{r+1}/n_0$, it is also $(\e'')$-regular with
$$\e''=2{\e'}^{1/7} = (2^{r+8}/n_0)^{1/7} \;.$$
By the claim's assumption that $2^{r+8} \le \sqrt{n_0}$ we have $\e'' \le 1/n_0^{1/14}$, which completes the proof.
\end{proof}

\subsection{The graph $\Gs$}\label{subsec:Construction}

We are now ready to formally define the graph that will be used to prove Theorem~\ref{theo:LB2}.
%
Let $s \in \N$ be even with
\begin{equation}\label{eq:s}
s \ge 400 \;,
\end{equation}
and put $n_0 = 4^{s+8}$.
Our graph, which we denote by $\Gs$, will be of density  $p := 1/2^s$.
First, for every $1 \le r \le s$ put $n_{r} = {2^{\floor{n_{r-1}/200}}}$. Note that

\begin{equation}\label{eq:ns}
n_s \ge \twr(s/2) \;,
\end{equation}
since $n_{r+2} \ge 2^{n_r}$ (as $n_r \ge n_0$ is sufficiently large).
Moreover, for $1 \le r \le s$ put $\a_r=1/(8n_0\cdots n_{r-1})$
and $d_{r-1}=n_0\prod_{j=1}^{r-1} (n_j/2)$. 

We recursively construct graphs $G_0,G_1,\ldots,G_s$, starting from $G_0=K_{n_0,n_0}$, in the same manner described in the previous subsection.
More precisely, setting $\rho_r = (n_r,d_{r-1},\a_r,1/16)$ for each $1 \le r \le s$, we let
\begin{equation}\label{eq:Gr}
G_r=K_{n_0,n_0}(\rho_1,\ldots,\rho_r) \;.
\end{equation}
Importantly, (\ref{eq:Gr}) is well defined since there exists a $\rho_r$-sequence for every $1 \le r \le s$.
Indeed, this follows from Lemma~\ref{lemma:balanced} since $d_{r-1} \ge n_0 \ge 200$, $n_r$ is even,
$n_r \le 2^{\floor{d_{r-1}/200}}$ (as $n_{r-1} \le d_{r-1}$) and
$$\sqrt{2\ln(d_{r-1})/n_r} 
\le 1/n_{r-1}^2
\le 1/(8n_0\cdots n_{r-1}) = \a_r 
\;.$$
%
%
%
We let our final graph $\Gs$ be any blow-up of $G_s$. Note that by Fact~\ref{fact:density}, $\Gs$ is a regular bipartite graph of density $p=1/2^s$.

\paragraph*{Properties of $\Gs$.}
Recall that in the process of constructing $\Gs$, each vertex of $G_r$ is repeatedly replaced by a set of new vertices.
For $0 \le r \le s$ let $\X_r$ be the partition of $V(\Gs)$ whose parts correspond to the vertices of $G_r$. 
Therefore, in what follows we will interchangeably refer to $X \in \X_r$ also as a vertex of $G_r$ or as a cluster of vertices in one of the graphs $G_{r+1},\ldots,G_s$.

Observe that each $\X_r$ refines $\X_{r-1}$,
and that $\X_r$ is an equipartition of order
$$
|\X_r|=v(G_r)=2\prod_{i=0}^r n_i \;.
$$
In particular, we have
\begin{equation}\label{eq:X0}
|\X_0| = 2n_0 = 2^{17} \cdot 4^s \;,
\end{equation}
and moreover, using~(\ref{eq:ns}),
\begin{equation}\label{eq:Xs}
|\X_s| \ge n_s \ge \twr(s/2) \;.
\end{equation}
If $X,Y \in \X_r$ with $r<s$ and $(X,Y) \in E(G_r)$ then we denote by $(X_{Y,0},X_{Y,1})$ the bipartition of $X$ that is \emph{associated} with $Y$ in the construction of $G_{r+1}$ from $G_r$ (recall the definition of a modified blow-up in Subsection~\ref{subsec:blow-up}).
Thus, $X_{Y,0}$ and $X_{Y,1}$ are each a union of parts in $\X_{r+1}$.
Similarly, we denote by $(Y_{X,0},Y_{X,1})$ the bipartition of $Y$ that is associated with $X$.
We will need the following properties of $\Gs$. 
We first note that from~(\ref{eq:s}) we have
\begin{equation}\label{eq:p-ub}
p = 1/2^s \le 2^{-400} \;.
\end{equation}

\begin{claim}\label{claim:densities}
	Let $1 \le r \le s$ and $X,Y \in \X_{r-1}$ with $(X,Y) \in E(G_{r-1})$.
	For every $\ell \in \{0,1\}$ there is $\ell' \in \{0,1\}$ such that:
	\begin{itemize}
		\item $d_{\Gs}(X_{Y,\ell},Y_{X,\ell'}) \neq 0$.
		In particular, $d_{\Gs}(X,Y) \neq 0$.
		\item Every $v \in X_{Y,\ell}$ satisfies
		$d_{\Gs}(v,Y_{X,\ell'}) = 2^r p$ and $d_{\Gs}(v,Y_{X,1-\ell'}) = 0$.
		In particular, $d_{\Gs}(v,Y) = 2^{r-1} p$.
	\end{itemize}
\end{claim}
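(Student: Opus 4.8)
The plan is to unwind the recursive definition of $\Gs$ one level at a time, tracking where the neighbors of a vertex $v \in X$ land. Fix $1 \le r \le s$ and $X,Y \in \X_{r-1}$ with $(X,Y) \in E(G_{r-1})$. Recall that $G_r = [G_{r-1}](n_r,d_{r-1},\a_r,1/16)$, so in passing from $G_{r-1}$ to $G_r$ the edge $(X,Y)$ is replaced (using choice~(i), as the excerpt permits us to assume here) by two copies of $K_{n_r,n_r}$: one between $X_{Y,0}$ and $Y_{X,0}$, and one between $X_{Y,1}$ and $Y_{X,1}$. Hence in $G_r$, every vertex of $X_{Y,\ell}$ has its entire neighborhood inside $Y$ equal to $Y_{X,\ell}$; that is, the correct choice is $\ell' = \ell$, and at level $r$ the density between $X_{Y,\ell}$ and $Y_{X,\ell}$ is $1$ while between $X_{Y,\ell}$ and $Y_{X,1-\ell}$ it is $0$.

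The remaining work is to propagate this through levels $r, r+1, \ldots, s$ and then through the final blow-up. First I would observe that $X_{Y,\ell}$ and $Y_{X,\ell}$ are each unions of clusters of $\X_r$, and that subsequent blow-up steps $G_r \to G_{r+1} \to \cdots \to G_s$ only refine these clusters and halve densities at each step: by~(\ref{eq:density0}), applied repeatedly, the density inside the bipartite graph spanned by $X_{Y,\ell}$ and $Y_{X,\ell}$ goes from $1$ at level $r$ down to $1/2^{s-r}$ at level $s$, and a final blow-up preserves densities. Since $p = 1/2^s$, this gives $d_{\Gs}(X_{Y,\ell},Y_{X,\ell}) = 2^r p \neq 0$, which in particular forces $d_{\Gs}(X,Y) \ne 0$. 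For the vertex-level statement, the key point is that the modified blow-up construction is "locally complete": once $v$'s neighborhood among the $Y$-clusters is determined at level $r$ (it is all of $Y_{X,\ell}$ and none of $Y_{X,1-\ell}$), the later blow-up steps replace $v$ by a set of vertices each of which, restricted to the blow-up of $Y_{X,\ell}$, again inherits a "half" in the same combinatorial pattern — but crucially $v$'s neighborhood never escapes the blow-up of $Y_{X,\ell}$. So $d_{\Gs}(v,Y_{X,1-\ell}) = 0$ exactly, and $d_{\Gs}(v,Y_{X,\ell})$ equals the edge-density that a single vertex sees into the iterated-blow-up of a $K_{n_r,n_r}$-edge, which is again $1/2^{s-r} = 2^r p$ after accounting for the final blow-up. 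Averaging the two gives $d_{\Gs}(v,Y) = \tfrac12 \cdot 2^r p = 2^{r-1} p$.

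The main obstacle — really the only place requiring care — is bookkeeping the recursion cleanly: one must verify that the halving in~(\ref{eq:density0}) is exactly what governs the density drop from level $r$ to level $s$ (so that the product $\prod_{j=r+1}^{s} \tfrac12 = 2^{-(s-r)}$ comes out right against $p = 2^{-s}$), and that passing to a blow-up at the very end changes nothing. I would set this up by a short downward induction on the level, with the inductive claim phrased as: "for $r \le t \le s$, every $v \in X_{Y,\ell}$ has $d_{G_t}(v, Y_{X,\ell}) = 2^{r-t+\,?}$ ..." — more precisely, that the neighborhood of $v$ (as a cluster of $G_t$) inside $Y_{X,1-\ell}$ is empty and inside $Y_{X,\ell}$ has relative density $2^{-(t-r)}$ — and then read off $t = s$ plus the blow-up. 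Choice~(ii) in the modified-blow-up definition only permutes the roles of $\ell'=0$ and $\ell'=1$, which is exactly why the statement asserts "there is $\ell'$" rather than "$\ell' = \ell$"; since this subsection's $\Gs$ uses choice~(i) throughout, we may in fact take $\ell' = \ell$.
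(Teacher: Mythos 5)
Your proposal is correct and follows essentially the same route as the paper: identify the two copies of $K_{k,k}$ (and of its complement) replacing the edge $(X,Y)$ at level $r$, and then track through levels $r+1,\ldots,s$ and the final blow-up that the induced bipartite graph on $(X_{Y,\ell},Y_{X,\ell'})$ stays degree-regular of density $2^{-(s-r)}=2^rp$ while the one on $(X_{Y,\ell},Y_{X,1-\ell'})$ stays empty. The paper packages your downward induction by observing (via Fact~\ref{fact:subseq}) that $G_s[X_{Y,\ell},Y_{X,\ell'}]$ is itself an iterated modified blow-up of $K_{k,k}$ and invoking Fact~\ref{fact:density}; this is exactly the bookkeeping step you flag as the only place needing care, and your inductive claim would discharge it.
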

\begin{proof}
	As the first item follows from the second, we prove the latter.
	By construction, the edge $(X,Y)$ of $G_{r-1}$ is replaced in $G_r$ by two copies of $K_{k,k}$ (with $k=\frac12 n_r$) and two copies of its complement $\overline{K_{k,k}}$.
	Specifically, $G_r[X_{Y,\ell},Y_{X,\ell'}] \simeq K_{k,k}$ and $G_r[X_{Y,\ell},Y_{X,1-\ell'}] \simeq \overline{K_{k,k}}$,
	where $\ell'=\ell$ if choice~$(i)$ in Section~\ref{subsec:blow-up} is used, and $\ell'=1-\ell$ if choice~$(ii)$ is used.
	In the construction of $G_{r+1}$, the above copy of $K_{k,k}$
	is turned into a modified blow-up of $K_{k,k}$;
	that is, $G_{r+1}[X_{Y,\ell},Y_{X,\ell'}] \simeq K_{k,k}(\rho'_{r+1})$ with $\rho'_{r+1}=(n_{r+1},k,\a_{r+1},1/2)$.
	This follows from the fact that $G_{r+1}[X_{Y,\ell},Y_{X,\ell'}]$ is a subgraph of $G_{r+1}$ together with Fact~\ref{fact:subseq}. Indeed, for each vertex, its associated sequence of bipartitions in $G_{r+1}[X_{Y,\ell},Y_{X,\ell'}]$ is a subsequence of its associated sequence in $G_{r+1}$.
	Continuing in this manner, we deduce that
	$G_{s}[X_{Y,\ell},Y_{X,\ell'}] \simeq K_{k,k}(\rho'_{r+1},\ldots,\rho'_s)$ (with $\rho'_i=(n_i,\prod_{j=r}^{i-1} (n_j/2),\a_i,1/2)$),	
	which is regular of density $1/2^{s-r} = 2^r p$ by Fact~\ref{fact:density}.
	This completes the proof.
\end{proof}

\begin{claim}\label{claim:prop-orthogonal}
	For every $1 \le r \le s$, every $X,Y \in \X_{r-1}$ with $d_{\Gs}(X,Y) \neq 0$,
	every $v \in V(\Gs) \sm X$ and every $\ell \in \{0,1\}$
	we have $d_{\Gs}(v,Y_{X,\ell}) \le \frac58 2^r p$.
\end{claim}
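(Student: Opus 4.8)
The plan is to analyze where the neighbors of a vertex $v \notin X$ can lie relative to the bipartition $(Y_{X,0},Y_{X,1})$ of $Y$. First I would observe that it suffices to prove the bound for $v$ lying in some cluster $W \in \X_{r-1}$ with $W \neq X$; if $(W,Y) \notin E(G_{r-1})$ then $v$ has no neighbors in $Y$ at all and the bound is trivial, so assume $(W,Y) \in E(G_{r-1})$. By the construction of $G_r$ from $G_{r-1}$ (the modified blow-up), the neighborhood of $v$ inside $Y$ is exactly one side $Y_{W,\ell''}$ of the bipartition of $Y$ \emph{associated with} $W$ (intersected with whatever further structure is irrelevant here, since in any blow-up densities are preserved). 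Hence $e_{\Gs}(v, Y_{X,\ell})$ is proportional to $|Y_{W,\ell''} \cap Y_{X,\ell}|$.

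The key point is then that $(Y_{W,0},Y_{W,1})$ and $(Y_{X,0},Y_{X,1})$ are \emph{two distinct bipartitions} in the $\a_r$-orthogonal sequence associated with $Y$ in the construction of $G_r$ (distinct because $W \neq X$). Therefore $|Y_{W,\ell''} \cap Y_{X,\ell}| \le (\tfrac14 + \a_r)|Y|$ by the definition of $\a$-orthogonality in~(\ref{eq:orthogonal-def}). Since $|Y_{X,\ell}| = \tfrac12|Y|$, this gives
$$d_{\Gs}(v, Y_{X,\ell}) = \frac{e_{\Gs}(v,Y_{X,\ell})}{|Y_{X,\ell}|} \le \frac{(\tfrac14 + \a_r)|Y|}{\tfrac12 |Y|} = \frac12 + 2\a_r \;.$$
This would be a bound on the \emph{relative} density within $Y_{X,\ell}$, so I would need to combine it with Claim~\ref{claim:densities}: the overall density $d_{\Gs}(v,Y)$ is either $2^{r-1}p$ or $0$ depending on whether $(W,Y)\in E(G_{r-1})$, and within the subgraph $G_r[W\text{-cluster},Y]$ the relevant densities scale. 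Concretely, the edges of $v$ into $Y$ all go into one side $Y_{W,\ell''}$, which has density $2^r p$ towards $v$ (the analogue of the second item of Claim~\ref{claim:densities} with $v$'s cluster playing the role of $X$). So $e_{\Gs}(v,Y_{X,\ell}) = 2^r p \cdot |Y_{W,\ell''}\cap Y_{X,\ell}| \le 2^r p (\tfrac14 + \a_r)|Y|$, and dividing by $|Y_{X,\ell}| = \tfrac12|Y|$ yields $d_{\Gs}(v,Y_{X,\ell}) \le (\tfrac12 + 2\a_r) 2^r p$.

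It remains to check that $\tfrac12 + 2\a_r \le \tfrac58$, i.e.\ $\a_r \le \tfrac{1}{16}$, which holds comfortably since $\a_r = 1/(8n_0\cdots n_{r-1}) \le 1/(8n_0)$ is tiny. The one subtlety I would be careful about is the edge case $v \in X$ (excluded by hypothesis) versus $v$ in a cluster $W\neq X$ that nevertheless is \emph{not} adjacent to $Y$ in $G_{r-1}$; in the latter case $d_{\Gs}(v,Y)=0$ and there is nothing to prove. I expect the main (minor) obstacle to be bookkeeping the passage between the "absolute" densities supplied by Claim~\ref{claim:densities} and the orthogonality bound, which is about intersection \emph{sizes} — making sure the factor $2^r p$ and the factor $\tfrac12$ from $|Y_{X,\ell}|=\tfrac12|Y|$ are applied exactly once each, and that taking a final blow-up of $G_s$ does not disturb any of these density identities.
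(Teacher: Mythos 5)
Your proof is correct and is essentially the paper's own argument: the paper packages your key observation --- that $(Y_{W,0},Y_{W,1})$ and $(Y_{X,0},Y_{X,1})$ are two \emph{distinct} bipartitions in the $\a_r$-orthogonal sequence attached to $Y$, so their sides overlap in at most $(\tfrac14+\a_r)|Y|$ --- as Claim~\ref{claim:density-max}, and then combines it with Claim~\ref{claim:densities} and $\a_r\le 1/16$ exactly as you do, after disposing of the trivial case where $v$'s cluster is not adjacent to $Y$ in $G_{r-1}$. The only step to phrase carefully is the identity $e_{\Gs}(v,Y_{X,\ell}) = 2^r p\,|Y_{W,\ell''}\cap Y_{X,\ell}|$: it rests on the per-cluster fact that $d_{\Gs}(v,Y')=2^r p$ for \emph{every} $\X_r$-cluster $Y'\subseteq Y_{W,\ell''}$ (Claim~\ref{claim:densities} applied to adjacent pairs of $\X_r$-clusters, which is how the paper argues, giving $d_{\Gs}(v,Y')\le 2^rp$ there), and not merely on the aggregate density $d_{\Gs}(v,Y_{W,\ell''})=2^r p$ --- with only the aggregate bound the neighbors of $v$ could a priori concentrate inside $Y_{W,\ell''}\cap Y_{X,\ell}$ and the $5/8$ saving would be lost.
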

\begin{proof}
	Suppose $v \in A' \sub A$ with $A \in \X_{r-1}$ and $A' \in \X_r$, where by assumption $A \neq X$.
	Recall that $G_r=G_{r-1}(n_r,d_{r-1},\a_r,1/16)$.
	Apply Claim~\ref{claim:density-max} on $G=G_{r-1}$, $G'=G_r$ and with $x,y,w,x'$ corresponding to $A,X,Y,A'$, respectively.
	It follows that the fraction of $Y' \in \X_r$ with $Y' \sub Y_{X,\ell}$ that satisfy $d_{\Gs}(A',Y') \neq 0$ is at most $2(\frac14+\a_r) \le 5/8$,
	where the last inequality uses the fact that, by construction, $\a_r \le 1/16$.
	By the second item in Claim~\ref{claim:densities} we have $d_{\Gs}(v,Y') \le 2^{r} p$ for each of the $Y'$ above, hence $d_{\Gs}(v,Y_{X,\ell}) \le \frac58 2^{r} p$, as needed.
\end{proof}

Summarizing Claim~\ref{claim:densities} and Claim~\ref{claim:prop-orthogonal}, we have the following regarding the degrees in $\Gs$.
\begin{claim}\label{claim:property_degrees}
	Let $1 \le r \le s$ and $X,Y \in \X_{r-1}$ with $d_{\Gs}(X,Y) \neq 0$.
	If $d_{\Gs}(X_{Y,\ell},Y_{X,\ell'}) \neq 0$ then for every vertex $v \in V(\Gs)$ we have
	$$d_{\Gs}(v,Y_{X,\ell'}) =
	\begin{cases}
	\le \frac58 2^r p	&\text{if } v \notin X\\
	2^r p					&\text{if } v \in X_{Y,\ell}\\
	0						&\text{if } v \in X_{Y,1-\ell}
	\end{cases}$$
\end{claim}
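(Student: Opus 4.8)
The statement to prove, Claim~\ref{claim:property_degrees}, is explicitly advertised in the excerpt as a summary of Claim~\ref{claim:densities} and Claim~\ref{claim:prop-orthogonal}, so the plan is simply to assemble those two results into a single case analysis. The hypothesis fixes $1 \le r \le s$, parts $X,Y \in \X_{r-1}$ with $d_{\Gs}(X,Y) \neq 0$, and indices $\ell,\ell' \in \{0,1\}$ with $d_{\Gs}(X_{Y,\ell},Y_{X,\ell'}) \neq 0$; I need to pin down $d_{\Gs}(v,Y_{X,\ell'})$ for an arbitrary vertex $v$ of $\Gs$, split according to whether $v \notin X$, $v \in X_{Y,\ell}$, or $v \in X_{Y,1-\ell}$.

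The first move is to invoke Claim~\ref{claim:densities} for this particular $\ell$: it produces some index, call it $\ell''$, such that every $u \in X_{Y,\ell}$ has $d_{\Gs}(u, Y_{X,\ell''}) = 2^r p$ and $d_{\Gs}(u, Y_{X,1-\ell''}) = 0$. I then need to observe that $\ell'' = \ell'$. This is forced by the hypothesis $d_{\Gs}(X_{Y,\ell},Y_{X,\ell'}) \neq 0$: if we had $\ell' = 1-\ell''$, then picking any $u \in X_{Y,\ell}$ would give $d_{\Gs}(u, Y_{X,\ell'}) = 0$, and averaging over $u \in X_{Y,\ell}$ would force $d_{\Gs}(X_{Y,\ell},Y_{X,\ell'}) = 0$, a contradiction. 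Hence $\ell'' = \ell'$, which immediately yields the last two cases: $d_{\Gs}(v,Y_{X,\ell'}) = 2^r p$ when $v \in X_{Y,\ell}$, and $d_{\Gs}(v,Y_{X,\ell'}) = d_{\Gs}(v, Y_{X,1-\ell''}) = 0$ when $v \in X_{Y,1-\ell}$ (noting $X = X_{Y,0} \cup X_{Y,1}$, so these two cases together with $v \notin X$ exhaust all vertices).

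For the remaining case $v \notin X$, I apply Claim~\ref{claim:prop-orthogonal} directly with the same $r$, $X$, $Y$ (whose hypothesis $d_{\Gs}(X,Y)\neq 0$ is exactly what we assumed) and with the index $\ell'$ in place of its $\ell$: it gives $d_{\Gs}(v, Y_{X,\ell'}) \le \tfrac58 2^r p$ for every $v \in V(\Gs) \setminus X$. Collecting the three cases gives precisely the displayed piecewise bound, so the proof is just: cite Claim~\ref{claim:densities}, argue the index match via the nonvanishing hypothesis, cite Claim~\ref{claim:prop-orthogonal}, done. I do not expect any genuine obstacle here — the only point requiring a sentence of care is the identification $\ell'' = \ell'$, since Claim~\ref{claim:densities} only asserts existence of a good index and one must use $d_{\Gs}(X_{Y,\ell},Y_{X,\ell'}) \neq 0$ to see that $\ell'$ is that index; everything else is a direct quotation of the two preceding claims.

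\begin{proof}
Let $v \in V(\Gs)$. Since $X = X_{Y,0} \cup X_{Y,1}$, exactly one of the following holds: $v \notin X$, or $v \in X_{Y,\ell}$, or $v \in X_{Y,1-\ell}$.

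First suppose $v \notin X$. By assumption $d_{\Gs}(X,Y) \neq 0$, so we may apply Claim~\ref{claim:prop-orthogonal} with this $r$, $X$, $Y$ and with $\ell'$ in the role of $\ell$, obtaining $d_{\Gs}(v, Y_{X,\ell'}) \le \frac58 2^r p$, as claimed.

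Now suppose $v \in X$. Apply Claim~\ref{claim:densities} with this $r$, $X$, $Y$ and the given $\ell$: there is $\ell'' \in \{0,1\}$ such that every $u \in X_{Y,\ell}$ satisfies $d_{\Gs}(u, Y_{X,\ell''}) = 2^r p$ and $d_{\Gs}(u, Y_{X,1-\ell''}) = 0$. We claim $\ell'' = \ell'$. Indeed, if $\ell' = 1-\ell''$, then every $u \in X_{Y,\ell}$ would have $d_{\Gs}(u, Y_{X,\ell'}) = 0$, hence
$$d_{\Gs}(X_{Y,\ell}, Y_{X,\ell'}) = \frac{1}{|X_{Y,\ell}|}\sum_{u \in X_{Y,\ell}} d_{\Gs}(u, Y_{X,\ell'}) = 0 \;,$$
contradicting the hypothesis $d_{\Gs}(X_{Y,\ell}, Y_{X,\ell'}) \neq 0$. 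Therefore $\ell'' = \ell'$, and so every $u \in X_{Y,\ell}$ satisfies $d_{\Gs}(u, Y_{X,\ell'}) = 2^r p$ and $d_{\Gs}(u, Y_{X,1-\ell'}) = 0$.

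Consequently, if $v \in X_{Y,\ell}$ then $d_{\Gs}(v, Y_{X,\ell'}) = 2^r p$, while if $v \in X_{Y,1-\ell}$ then, again by Claim~\ref{claim:densities} applied with $1-\ell$ in place of $\ell$ — which yields the index $1-\ell'$ by the construction in its proof, or directly since $X_{Y,1-\ell}$ is precisely the complement of $X_{Y,\ell}$ in $X$ and its neighbourhood in $Y$ lies in $Y_{X,1-\ell'}$ — we get $d_{\Gs}(v, Y_{X,\ell'}) = 0$. Combining the three cases gives the displayed bound.
\end{proof}
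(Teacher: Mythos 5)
Your proof is correct and follows exactly the route the paper intends: the paper offers no separate argument for this claim, presenting it as a direct summary of Claim~\ref{claim:densities} (for $v\in X$) and Claim~\ref{claim:prop-orthogonal} (for $v\notin X$), which is precisely your assembly. Your extra care in pinning down the index (that the $\ell'$ in the hypothesis is the index produced by Claim~\ref{claim:densities}, and that $X_{Y,1-\ell}$'s neighbourhood in $Y$ lies in $Y_{X,1-\ell'}$ by the construction) is accurate and fills in exactly what the paper leaves implicit.
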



\begin{claim}\label{claim:cluster-degree}
	For $0 \le r \le s$ and $X \in \X_{r}$,
	the number of $Y \in \X_{r}$ with $d_{\Gs}(X,Y) \neq 0$ is $|\X_{r}|/2^{r+1}$.
\end{claim}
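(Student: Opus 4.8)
The plan is to first translate the quantity in question into a purely graph-theoretic statement about $G_r$, and then evaluate it by an easy induction on $r$. Identify each part of $\X_r$ with the corresponding vertex of $G_r$. The first thing I would prove is that for $X,Y\in\X_r$ one has $d_{\Gs}(X,Y)\neq 0$ if and only if $(X,Y)\in E(G_r)$, so that the number we want is exactly $\deg_{G_r}(X)$. For the ``only if'' direction, note that $\Gs$ is obtained from $G_r$ by a sequence of modified blow-ups followed by one ordinary blow-up, and in each such operation an edge is only ever placed between (descendants of) clusters coming from an edge of the previous graph; hence $d_{\Gs}(X,Y)\neq 0$ forces $(X,Y)\in E(G_r)$. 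For the ``if'' direction: when $r<s$, the first item of Claim~\ref{claim:densities} applied with $r+1$ in place of $r$ (i.e.\ to the pair $X,Y\in\X_r$ with $(X,Y)\in E(G_r)$) gives $d_{\Gs}(X,Y)\neq 0$; when $r=s$, $\Gs$ is simply a blow-up of $G_s$, so an edge of $G_s$ becomes a complete bipartite graph between the two clusters and again $d_{\Gs}(X,Y)\neq 0$.

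Having made this reduction, it remains to show that $G_r$ is regular of degree $v(G_r)/2^{r+1}=|\X_r|/2^{r+1}$ (recall $|\X_r|=v(G_r)$), which I would do by induction on $r$. The base case $r=0$ is immediate: $G_0=K_{n_0,n_0}$ is $n_0$-regular while $v(G_0)=2n_0$. For the inductive step, recall that $G_r=G_{r-1}(n_r,d_{r-1},\a_r,1/16)$ is formed from $G_{r-1}$ by blowing up each vertex into a cluster of $n_r$ vertices and replacing each edge by two copies of $K_{n_r/2,\,n_r/2}$ (the two ``halves'' of a cluster joined according to the associated bipartitions). Consequently $v(G_r)=n_r\,v(G_{r-1})$ and every vertex of $G_r$ has degree $\frac{n_r}{2}\deg_{G_{r-1}}(\cdot)$. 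Plugging in the induction hypothesis $\deg_{G_{r-1}}(\cdot)=v(G_{r-1})/2^{r}$ yields $\deg_{G_r}(\cdot)=\frac{n_r}{2}\cdot\frac{v(G_{r-1})}{2^{r}}=\frac{v(G_r)}{2^{r+1}}$, as required. (Alternatively, one can bypass the induction by noting that $G_r$ is $d_{r-1}\cdot\tfrac12 n_r$-regular and that this number equals $v(G_r)/2^{r+1}$ by the closed forms for $d_{r-1}$ and $v(G_r)$.)

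I do not expect any genuine obstacle here: the computation in the second step is routine degree bookkeeping. The only point that needs a small amount of care is the equivalence in the first step, and in particular remembering that at the top level $\Gs$ is a plain blow-up of $G_s$ rather than a modified blow-up, so the case $r=s$ of that equivalence must be argued directly rather than via Claim~\ref{claim:densities}.
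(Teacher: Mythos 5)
Your proof is correct and follows essentially the same route as the paper: reduce the count to $\deg_{G_r}(X)$ via the first item of Claim~\ref{claim:densities} (plus the observation that non-edges of $G_r$ contribute density $0$), and then compute that $G_r$ is $v(G_r)/2^{r+1}$-regular. The only cosmetic difference is that you re-derive the regularity of $G_r$ by an explicit induction, whereas the paper just cites Fact~\ref{fact:density}; your extra care with the ``only if'' direction and the $r=s$ case is fine but not a new idea.
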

\begin{proof}
	By~(\ref{eq:Gr}) and Fact~\ref{fact:density},
	every vertex of $G_{r}$ has precisely $\frac12|V(G_r)|/2^{r}$ neighbors.
	Recalling that the parts of $\X_{r}$ correspond to the vertices of $G_{r}$, it follows that the number of $Y \in \X_r$ with
	$d_{\Gs}(X,Y) \neq 0$ is,
	using the first item in Claim~\ref{claim:densities},  $\frac12|\X_{r}|/2^{r} = |\X_r|/2^{r+1}$.
\end{proof}

We will also need the following two pseudorandom properties of $\Gs$.

\begin{claim}\label{claim:property_1-6}
Let $Z \sub V(\Gs)$ and $1 \le r \le s$.	
Suppose $|Z \sm X| \le \z|Z|$ for some $X \in \X_{r-1}$ while $|Z \sm X'| \ge \z'|Z|$ for every $X' \in \X_r$.
For at least $\frac16|\X_{r-1}|/2^r$ clusters $Y \in \X_{r-1}$ we have
$$\min\{|Z \cap X_{Y,0}|, |Z \cap X_{Y,1}|\} \ge \frac18(\z'-\z)|Z| \;.$$
\end{claim}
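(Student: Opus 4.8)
The plan is to reduce Claim~\ref{claim:property_1-6} to the ``balanced implies spread'' lemma, Lemma~\ref{lemma:1-6}, applied with a suitably chosen weight vector supported on $Z$. First I would set up the right object: since $X \in \X_{r-1}$ is a vertex of $G_{r-1}$, the construction of $G_r$ equips $X$ with an $(n_r, d_{r-1}, \a_r, 1/16)$-sequence of bipartitions, one bipartition $(X_{Y,0}, X_{Y,1})$ for each neighbor $Y$ of $X$ in $G_{r-1}$, i.e., for each $Y \in \X_{r-1}$ with $d_{\Gs}(X,Y) \ne 0$. By Claim~\ref{claim:cluster-degree} there are exactly $|\X_{r-1}|/2^r$ such $Y$. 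Crucially this sequence is $\frac{1}{16}$-balanced as a sequence of bipartitions of $X$. I want to transfer balancedness from ``bipartitions of $X$'' to ``bipartitions of $Z$'' (or really of $Z \cap X$). One does not need to bipartition the small remainder $Z \sm X$; instead I would work with the normalized indicator weight vector $\l$ on $X$ defined by $\l_t = \frac{1}{|Z \cap X|}\mathbbm{1}[t \in Z]$ for $t \in X$, so $\l_t \ge 0$ and $\norm{\l}_1 = 1$.

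Next I would control $\norm{\l}_\infty$, which is where the hypotheses $|Z \sm X| \le \z|Z|$ and $|Z \sm X'| \ge \z'|Z|$ for all $X' \in \X_r$ enter. In the blow-up $\Gs$, the parts of $\X_r$ are exactly the singletons when we identify vertices of $G_r$; but since $\Gs$ is itself a blow-up of $G_s$, each cluster $X' \in \X_r$ is a set of equal-sized blow-up classes, and $\norm{\l}_\infty$ is (up to the uniform blow-up factor, which cancels) the largest $|Z \cap X'|/|Z\cap X|$ over $X' \in \X_r$ with $X' \sub X$. The hypothesis $|Z \sm X'| \ge \z'|Z|$ says $|Z \cap X'| \le (1-\z')|Z|$ for every such $X'$, and combined with $|Z \cap X| \ge (1-\z)|Z|$ this gives $\norm{\l}_\infty \le \frac{(1-\z')|Z|}{(1-\z)|Z|} \le 1 - (\z' - \z)$ after a short manipulation (using $\z' \ge \z$, which is implicit since otherwise the statement is vacuous or trivial). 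Actually more carefully I would just write $1 - \norm{\l}_\infty \ge \z' - \z$ directly from these two inequalities. Then Lemma~\ref{lemma:1-6}, applied to the $\frac{1}{16}$-balanced sequence $\{(X_{Y,0}, X_{Y,1})\}_{Y}$ and to this $\l$, yields at least $d/6$ of the bipartitions with $\min\{\sum_{t \in X_{Y,0}}\l_t, \sum_{t \in X_{Y,1}}\l_t\} \ge \frac18(1 - \norm{\l}_\infty) \ge \frac18(\z' - \z)$, where $d = |\X_{r-1}|/2^r$ is the length of the sequence.

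Finally I would translate back: $\sum_{t \in X_{Y,\ell}} \l_t = |Z \cap X_{Y,\ell}| / |Z \cap X|$, so the conclusion of Lemma~\ref{lemma:1-6} reads $\min\{|Z \cap X_{Y,0}|, |Z \cap X_{Y,1}|\} \ge \frac18(\z' - \z)|Z \cap X|$. Since $|Z \cap X| = |Z| - |Z \sm X| \le |Z|$, this is weaker than what I want, so I should instead keep $|Z \cap X| \ge (1-\z)|Z|$; strictly the displayed claim asks for $\ge \frac18(\z'-\z)|Z|$, so I need $|Z \cap X| \ge |Z|$, which is false. I would resolve this by noting the intended reading is almost certainly with $|Z\cap X|$, or—since $\z$ should be taken small relative to $\z'$ in the application—absorbing the factor $(1-\z)$; to be safe I would state and prove it with $\frac18(\z'-\z)|Z\cap X| \ge \frac18(\z'-\z)(1-\z)|Z|$ and remark that this suffices. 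The number of good $Y$ is $d/6 = |\X_{r-1}|/(6\cdot 2^r) = \frac16 |\X_{r-1}|/2^r$, matching the statement. The main obstacle is purely bookkeeping: correctly identifying that the bipartition sequence associated to $X$ in forming $G_r$ is genuinely $\frac{1}{16}$-balanced on $X$ (it is, by construction, $\rho_r = (n_r, d_{r-1}, \a_r, 1/16)$), that restricting attention to $Z \cap X$ via the weight vector $\l$ is legitimate (Lemma~\ref{lemma:1-6} allows arbitrary nonnegative $\l$ with $\norm{\l}_1 = 1$, in particular one supported on a subset), and handling the blow-up factor and the $|Z\cap X|$ versus $|Z|$ discrepancy cleanly.
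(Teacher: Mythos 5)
Your overall strategy is the paper's: apply Lemma~\ref{lemma:1-6} to the $\frac{1}{16}$-balanced sequence of bipartitions of $X$ associated with the $|\X_{r-1}|/2^r$ neighbours $Y$ of $X$ in $G_{r-1}$, with a weight vector recording how $Z$ sits inside $X$. But there are two concrete problems in the execution. First, the weight vector must live on the $n_r$ \emph{parts} $X_1,\ldots,X_{n_r}$ of $\X_r$ inside $X$ (equivalently, on the vertices of $G_r$ that $X$ comprises), with $\l_t=|Z\cap X_t|/|Z\cap X|$ — this is what the paper does. The $\frac{1}{16}$-balancedness is a property of the bipartitions of this $n_r$-element set only; the induced bipartitions of the vertex set $X\sub V(\Gs)$ are \emph{never} $\frac{1}{16}$-balanced, since any two vertices of $\Gs$ lying in the same part $X_t$ fall on the same side of every bipartition. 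Moreover, for your vertex-level indicator vector $\l_t=\frac{1}{|Z\cap X|}\mathbbm{1}[t\in Z]$ one has $\norm{\l}_\infty=1/|Z\cap X|$, not $\max_{X'}|Z\cap X'|/|Z\cap X|$ as you later assert; your "blow-up factor cancels" remark papers over the fact that you are computing $\norm{\l}_\infty$ for a different vector than the one you defined. With the part-level weights both issues disappear.

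Second, your worry that the argument only yields $\frac18(\z'-\z)|Z\cap X|$ rather than $\frac18(\z'-\z)|Z|$ — and your proposal to weaken the claim — comes from decoupling the two factors prematurely. Keep them together: Lemma~\ref{lemma:1-6} gives
$$\min\{|Z \cap X_{Y,0}|,\,|Z \cap X_{Y,1}|\} \;\ge\; \tfrac18\big(1-\norm{\l}_\infty\big)\,|Z\cap X| \;=\; \tfrac18\Big(|Z\cap X|-\max_t|Z\cap X_t|\Big) \;\ge\; \tfrac18\big((1-\z)-(1-\z')\big)|Z| \;=\; \tfrac18(\z'-\z)|Z|\;,$$
using $|Z\cap X|\ge(1-\z)|Z|$ and $|Z\cap X_t|\le(1-\z')|Z|$ directly on the difference. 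So the claim holds exactly as stated and no $(1-\z)$ loss or reinterpretation is needed.
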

\begin{proof}
Let $\Xt=\{X_1,\ldots,X_{n_r}\}$ be the partition of $X$ into parts of $\X_{r}$.
Recall that each of the $|\X_{r-1}|/2^r$ clusters $Y \in \X_{r-1}$ with $d_{\Gs}(X,Y) \neq 0$ (see Claim~\ref{claim:cluster-degree}) is associated with a bipartition of $\Xt$, and that the sequence of these bipartitions is $1/16$-balanced.
Apply Lemma~\ref{lemma:1-6} on this sequence with $\lambda_t = |Z \cap X_t|/|Z \cap X|$. Thus, for at least $\frac16|\X_{r-1}|/2^r$ clusters $Y \in \X_r$ we have
$$\min\{|Z \cap X_{Y,0}|, |Z \cap X_{Y,1}|\}
\ge \frac18\Big(|Z \cap X|-\max_{t} |Z \cap X_t|\Big)
\ge \frac18(\z'-\z)|Z| \;,
$$
where the last inequality uses the fact that, by the assumptions in the statement,
$|Z \cap X| \ge (1-\z)|Z|$ while $|Z \cap X_t| \le (1-\z')|Z|$ for every $t$.
\end{proof}

We write $n=|V(\Gs)|$ and, recalling that $\Gs$ is bipartite, we write $\Gs=(U,V;E)$.

\begin{claim}\label{claim:property_qr}
Let $A \sub U$, $B \sub V$. If $|A| \ge p^{1/7} n$ and $|B| \le \frac{1}{512}n$ then $e_{\Gs}(A,B) \le \frac{1}{256}pn|A|$.
\end{claim}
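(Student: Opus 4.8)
The plan is to derive the bound directly from the quasirandomness of $\Gs$, specifically from the fact that $\Gs$ is $(\e)$-regular for a tiny $\e$, as established in Claim~\ref{claim:prop-super}. Note first that a naive degree count is useless here: since $\Gs$ is regular of density $p$ one gets $e_{\Gs}(A,B)\le e_{\Gs}(U,B)=\tfrac12 pn\s{B}\le pn^2/1024$, whereas when $\s{A}$ is as small as $p^{1/7}n$ we need a bound of order $p^{8/7}n^2$, which is much smaller; so one genuinely has to exploit the $A$--$B$ interaction.

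First I would record the relevant regularity parameter. Since $\Gs$ is a blow-up of $G_s=K_{n_0,n_0}(\rho_1,\ldots,\rho_s)$ with $n_0=4^{s+8}$ and $\a_r=1/(8n_0\cdots n_{r-1})$, Claim~\ref{claim:prop-super} applies with $r=s$ and shows that $\Gs$ is $(\e_0)$-regular with $\e_0:=n_0^{-1/14}=2^{-(s+8)/7}$. Using $p=1/2^s$ and $s\ge 400$ this yields the two inequalities I will need: $\e_0=2^{-8/7}p^{1/7}<p^{1/7}$ and $\e_0\le 2^{-408/7}<2^{-9}=\tfrac{1}{512}$.

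The key step is then to pad $B$ up to a set of roughly the right size. Because $\s{B}\le\tfrac{1}{512}n$ by hypothesis while $\e_0\s{V}\le\e_0 n<\tfrac{1}{512}n$, there is a set $B'$ with $B\sub B'\sub V$ and $\e_0\s{V}\le\s{B'}\le\tfrac{1}{512}n$ (take $B'=B$ if $B$ is already large enough, and otherwise enlarge it arbitrarily). Similarly $\s{A}\ge p^{1/7}n>\e_0 n\ge\e_0\s{U}$. Hence the pair $(A,B')$ satisfies the size conditions in the definition of $(\e_0)$-regularity, so $d_{\Gs}(A,B')\le(1+\e_0)p\le 2p$, and since $B\sub B'$,
\[
e_{\Gs}(A,B)\le e_{\Gs}(A,B')=d_{\Gs}(A,B')\,\s{A}\,\s{B'}\le 2p\cdot\s{A}\cdot\tfrac{1}{512}n=\tfrac{1}{256}\,pn\s{A}\,,
\]
which is exactly the asserted bound.

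There is no real obstacle here beyond bookkeeping; the one point to watch is that the size window $[\,\e_0\s{V},\ \tfrac{1}{512}n\,]$ for $\s{B'}$ is non-empty and comfortably accommodates $\s{B}$, i.e.\ that $\e_0\ll\tfrac{1}{512}$. This is precisely where the hypothesis $s\ge 400$ (equivalently, the choice $n_0=4^{s+8}$ in the construction) is used, and it is also the reason the density threshold on $A$ must be of the form $p^{1/7}$: it has to dominate $\e_0\approx p^{1/7}$.
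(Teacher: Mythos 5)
Your proof is correct. It uses the same key input as the paper -- Claim~\ref{claim:prop-super} applied to $G_s=K_{n_0,n_0}(\rho_1,\ldots,\rho_s)$, giving that $\Gs$ is $(\e_0)$-regular with $\e_0\le n_0^{-1/14}\le p^{1/7}$ -- and your numerical bookkeeping ($\e_0=2^{-8/7}p^{1/7}<p^{1/7}$ and $\e_0\ll \tfrac{1}{512}$, so $|A|\ge\e_0|U|$ and the window $[\e_0\s{V},\tfrac{1}{512}n]$ for $\s{B'}$ is nonempty) checks out.

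The only place you diverge from the paper is in handling the case where $B$ is too small for the regularity condition to apply directly, which is the one nontrivial point of the claim. The paper splits into cases: if $\s{B}\ge\e\s{V}$ it applies the density upper bound directly, and if $\s{B}<\e\s{V}$ it bounds $e_{\Gs}(A,B)=e_{\Gs}(A,V)-e_{\Gs}(A,V\sm B)$ from above, using the exact degree-regularity of $\Gs$ (so $e_{\Gs}(A,V)=p\s{A}\s{V}$) together with the \emph{lower}-bound direction of $(\e)$-regularity on the large set $V\sm B$; this yields the slightly sharper estimate $e_{\Gs}(A,B)\le p\s{A}(\s{B}+\e\s{V})$. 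You instead enlarge $B$ to a set $B'$ with $\e_0\s{V}\le\s{B'}\le\tfrac{1}{512}n$ and use monotonicity $e_{\Gs}(A,B)\le e_{\Gs}(A,B')$ plus the upper-bound direction $d_{\Gs}(A,B')\le(1+\e_0)p\le 2p$. Your padding trick is a bit cleaner in that it treats both size regimes of $B$ uniformly and only needs the upper-density half of quasirandomness (it does not use that $\Gs$ is a regular graph), at the cost of giving a slightly lossier intermediate bound; since the target constant $\tfrac{1}{256}$ has ample slack, this makes no difference here.
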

\begin{proof}
First, we prove that $G_s$, and hence $\Gs$, is $(\e)$-regular with $\e \le p^{1/7}$ (recall~(\ref{eq:()-reg})).
Recalling $G_s=K_{n_0,n_0}(\rho_1,\ldots,\rho_s)$, 
we apply Claim~\ref{claim:prop-super} on $G_s$ using the fact that $\a_i = 1/8n_0\cdots n_{i-1}$ for every $1 \le i \le r$, and the fact that $n_0 = 4^{s+8} \ge 4^{r+8}$.
It follows that $G_s$ is $(\e)$-regular with $\e \le 1/n_0^{1/14} \le 1/2^{(s+8)/7} \le 1/2^{s/7} = p^{1/7}$, as desired.

Now, if $|B| \ge \e|V|$ then $e_{\Gs}(A,B) \le (1+\e)p|A||B| \le \frac{1}{256}p|A|n$, as needed.
Suppose otherwise that $|B| \le \e|V|$.
Note that $\e \le 2^{-8}$ by~{(\ref{eq:p-ub})}.
We have $|V \sm B| \ge (1-\e)|V| \ge \e|V|$ and thus, by the $(\e)$-regularity of $\Gs$, we get $e_{\Gs}(A,\, V \sm B) \ge (1-\e) p|A||V \sm B|$.
Therefore,
$$e_{\Gs}(A,B) = e_{\Gs}(A,V) - e_{\Gs}(A,V \sm B) \le p|A|(|V| - (1-\e)|V \sm B|) \le p|A|(|B| + \e|V|) \le \frac{1}{256}pn|A| \;,$$
where we used the fact that $e_{\Gs}(A,V)=p|A||V|$ since $\Gs$ is regular of density $p$.
This completes the proof.
\end{proof}

\subsection{Lower bound proof}\label{subsec:LB-main}

For sets $S,T$ we write $S \sub_\b T$ if $|S \sm T| \le \b|S|$.
For a partition $\P$ we write $S \in_\b \P$ if $S \sub_\b P$ for some $P \in \P$.
For partitions $\P,\Q$ of the same set of size $n$ we write $\Q \preceq_\b \P$ if
$$\sum_{\substack{Q \in \Q:\,Q \notin_\b \P}} |Q| \le \b n \;.$$
Note that for $\Q$ equitable, $\Q \preceq_\b \P$ if and only if
all but $\b|\Q|$ parts $Q \in \Q$ satisfy $Q \in_\b \P$ (as mentioned in Subsection~\ref{subsec:LB-overview}).

Our main technical result towards proving Theorem~\ref{theo:LB2} is the following, 
where we recall that $\Gs$ denotes the graph of density $p=2^{-s}$ constructed in Subsection~\ref{subsec:Construction}.
We say that a partition $\Z$ is \emph{perfectly $\e$-regular} if all pairs of $\Z$ are $\e$-regular.

\begin{theo}
\label{theo:LB}
Let $\d \le 2^{-32}$
and put $\g = \max\{64\sqrt{\d},\, p^{1/7}\}$.
Let $\Z \preceq \X_0$ be a perfectly $\frac{1}{16}p$-regular partition of a graph that is $\d$-close to $\Gs$.
Then $\Z \preceq_\g \X_s$.
\end{theo}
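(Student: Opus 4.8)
The plan is to argue by contradiction: suppose $\Z \not\preceq_\g \X_s$, so that at least $\g n$ worth of clusters $Z \in \Z$ fail to be $\g$-contained in any member of $\X_s$. For each such $Z$, since $\Z$ refines $\X_0$ we have $Z \subseteq_0 X$ for some $X \in \X_0$, and hence there is a unique $1 \le r \le s$ (depending on $Z$) such that $Z \subseteq_\g X'$ for some $X' \in \X_{r-1}$ but $Z \not\in_\g \X_r$; write $\z = \g$ and $\z' = \g$ for the parameters fed into Claim~\ref{claim:property_1-6}. The heart of the argument is to show that each such ``bad'' cluster $Z$ forces $\Omega(p\,n\,|Z|)$ edge modifications among the edges incident to $Z$, when comparing $\Gs$ with the $\d$-close graph $G'$ carrying the partition $\Z$. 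Summing over all bad $Z$ (each contributing a disjoint set of incident edge-modifications, up to a factor of $2$ for double-counting) yields $\Omega(p\,n \cdot \g n) = \Omega(\g \, pn^2) = \Omega(\g |E(\Gs)|)$ modifications; since $\g = \max\{64\sqrt{\d}, p^{1/7}\}$ this exceeds $\d |E(\Gs)|$ once the implied constant is tracked, contradicting $\d$-closeness.

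To produce the modifications for a fixed bad $Z$ with its associated level $r$ and cluster $X' \in \X_{r-1}$: apply Claim~\ref{claim:property_1-6} to get a $\tfrac16 |\X_{r-1}|/2^r$-sized family of clusters $Y \in \X_{r-1}$ for which the associated bipartition $X' = X'_{Y,0} \cup X'_{Y,1}$ splits $Z$ into two linear-size pieces, $\min\{|Z \cap X'_{Y,0}|, |Z \cap X'_{Y,1}|\} \ge \tfrac18(\g - \g)|Z|$ — here I need to be careful: the two parameters $\z, \z'$ must actually differ, so one takes $\z$ to be the containment parameter ($\g$) at level $r-1$ and $\z'$ a constant fraction like $\tfrac12$ coming from ``not $\g$-contained at level $r$''; the gap $\z' - \z \ge \tfrac12 - \g$ is then bounded below by a constant. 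For each such $Y$, using Claim~\ref{claim:property_degrees} (equivalently Claims~\ref{claim:densities} and~\ref{claim:prop-orthogonal}): the part $Z \cap X'_{Y,\ell}$ with larger intersection has density exactly $2^r p$ to $Y_{X',\ell'}$, while $Z \cap X'_{Y,1-\ell}$ has density $0$ to $Y_{X',\ell'}$; meanwhile vertices outside $X'$ have density $\le \tfrac58 2^r p$. Thus in $\Gs$ the pair $(Z, Y_{X',\ell'})$ is strongly irregular: two subsets of $Z$, each of density $\Omega(\g)$ in $Z$, with a density discrepancy of $\Omega(2^r p)$.

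Now I invoke the two facts that $\Z$ is \emph{perfectly} $\tfrac1{16}p$-regular and that $\Gs$ (hence any subgraph) has the quasirandomness captured by Claim~\ref{claim:property_qr} and by $(\e)$-regularity with $\e \le p^{1/7}$. Since $Y$ is essentially tiled by clusters $Z' \in \Z$ (it is, up to the $\preceq_\g$ slack — and here the quasirandomness lets the slack average out over the many $Y$'s), and all pairs $(Z,Z')$ are $\tfrac1{16}p$-regular in $G'$, one deduces that $(Z, Y_{X',\ell'})$ is close to being $\tfrac1{16}p$-regular in $G'$. But it was $\Omega(2^r p)$-irregular in $\Gs$ with linear-size witnesses; combining the two (the density of $Y_{X',\ell'}$ in $\Gs$ is $\Theta(2^r p)$ on the ``live'' side), we get $|E_{\Gs}(Z, Y_{X',\ell'}) \triangle E_{G'}(Z, Y_{X',\ell'})| = \Omega(2^r p \,|Z|\,|Y_{X',\ell'}|) = \Omega(2^r p \,|Z| \cdot |Y|/2)$. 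Summing over the $\Omega(|\X_{r-1}|/2^r)$ clusters $Y$ in the family — and using $|Y| = n/|\X_{r-1}|$ — the $2^r$ and $2^{-r}$ cancel, giving $\Omega(p\,|Z|\,n)$ modifications incident to $Z$, as claimed.

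\textbf{Main obstacle.} The delicate step is controlling the error introduced by $\Z$ not being an exact refinement of $\X_{r-1}$: the clusters $Z'$ spanning the boundary between $Y_{X',\ell'}$ and the rest of $Y$ (or straddling $Y$ itself) contribute an error term in the regularity transfer that can, for individual $Y$, dominate the $\Omega(2^r p |Z||Y_{X',\ell'}|)$ main term. The resolution — as hinted in the proof overview — is \emph{not} to win for every $Y$ but to win in aggregate: the quasirandomness of $\Gs$ (via Claim~\ref{claim:property_qr} and the $(\e)$-regularity, $\e \le p^{1/7} \le \g$) bounds the total ``bad mass'' across all $Y \in \X_{r-1}$, so that after summing the error is a lower-order fraction of $\frac16|\X_{r-1}|/2^r \cdot 2^r p |Z| |Y_{X',\ell'}|$. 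Making this averaging quantitative, with all the constants ($\tfrac1{16}$, $\tfrac58$, $\tfrac18$, $64\sqrt\d$, $p^{1/7}$) chosen so the final inequality beats $\d|E(\Gs)|$, is where the real work lies; the constant $\d \le 2^{-32}$ and the generous slack in $\g$ are precisely what make the bookkeeping close.
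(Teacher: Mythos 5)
Your overall strategy is the paper's: locate clusters $Z$ whose containment first fails at some level $r$, use the balanced bipartitions (Claim~\ref{claim:property_1-6}) to find $\Omega(2^{-r}|\X_{r-1}|)$ clusters $Y$ splitting $Z$ nontrivially, play the exact densities of Claim~\ref{claim:property_degrees} in $\Gs$ against the perfect $\frac{1}{16}p$-regularity of $\Z$ in $G'$ to force modifications between $Z$ and $Y_{X,\ell'}$, and sum. But there is a genuine gap in the step you yourself flag as the main obstacle. You work with all clusters that are not $\g$-contained at level $s$, each at its own first-failure level $r$, and claim the boundary error ``averages out over the many $Y$'s'' by quasirandomness. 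Quasirandomness (Claim~\ref{claim:property_qr}) only converts a \emph{small} bad vertex set into few edges meeting it; it does not make the bad set small. For a cluster whose first failure occurs at a level $r-1$ at which $\Z\npreceq_\g\X_{r-1}$, the set of vertices not well-tiled at level $r-1$ (the paper's $\B_r$) can have mass comparable to $n$, and then the error term $e_{\Gs}(Z_1,Y\cap\B_r)$, summed over $Y$, is of order $\g p|Z|n$ and swamps the main term $\Theta(\g\, p|Z|n/96)$ --- Claim~\ref{claim:property_qr} is not even applicable since it needs $|B|\le\frac{1}{512}n$. The missing idea is to take $R$ to be the \emph{smallest} level with $\Z\npreceq_\g\X_R$: then the clusters failing at level $R$ already carry mass $>\g n$, all their first-failure levels are at most $R$, and the minimality gives $|\B_R|\le 2\g n\le 2^{-9}n$, which is exactly what lets Claim~\ref{claim:property_qr} control the aggregate error. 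Without this restriction your aggregate bound is not justified.

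Two further, more local slips. First, your application of Claim~\ref{claim:property_1-6} with $\z'=\tfrac12$ is unjustified: ``not $\g$-contained in any $X'\in\X_r$'' gives only $|Z\sm X'|\ge\g|Z|$, i.e.\ $\z'=\g$, and with $\z=\g$ the gap vanishes. The correct move (as in the paper) is a two-case split: either $Z\nsubseteq_{\frac12\g}X$, in which case every $Y$ works trivially, or $Z\subseteq_{\frac12\g}X$ and one applies the claim with $\z=\frac12\g$, $\z'=\g$, producing witness sets of size only $\frac{1}{16}\g|Z|$. Second, because the witnesses have size $\Theta(\g|Z|)$, the modifications forced per bad $Z$ are $\Theta(\g\,p|Z|n)$, not $\Theta(p|Z|n)$, so the total is $\Theta(\g^2 pn^2)$ rather than your $\Theta(\g pn^2)$; this is precisely why $\g$ must be at least $64\sqrt{\d}$ (so that $\g^2\gtrsim\d$) and the comparison with $\d|E(\Gs)|$ still closes, but your accounting as written hides the reason the $\sqrt{\d}$ appears in the definition of $\g$.
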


The proof of Theorem~\ref{theo:LB} appears in Subsection~\ref{subsec:LB-proof}.
Our goal in the rest of this subsection is to use Theorem~\ref{theo:LB} in order to prove the lower bound~(\ref{eq:lower}) in Theorem~\ref{theo:LB2}.
For convenience, we restate the lower bound statement below.

\begin{theo}[SRAL lower bound, restated]
\label{theo:LB3}
There are fixed constants $\d_0,c>0$ such that the following holds.
If $\Z$ is a $p^5$-regular partition of a graph that is $\d_0$-close to $\Gs$ then $|\Z| \ge \twr(c\log\frac{1}{p})$.
\end{theo}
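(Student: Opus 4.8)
The plan is to deduce Theorem~\ref{theo:LB3} from the main technical result, Theorem~\ref{theo:LB}, by reducing from an arbitrary $p^5$-regular partition of a graph $\d_0$-close to $\Gs$ to the hypotheses of Theorem~\ref{theo:LB} --- namely, a \emph{perfectly} $\tfrac{1}{16}p$-regular partition that refines $\X_0$, of a graph that is (somewhat more) close to $\Gs$. First I would fix $\d_0$ as a sufficiently small absolute constant (e.g.\ $\d_0 = 10^{-10}$, small enough that all the inequalities below go through and in particular $\g = \max\{64\sqrt{\d}, p^{1/7}\}$ can be kept below a small constant like $\tfrac{1}{64}$, using also~(\ref{eq:p-ub})), and set $c$ to be a small absolute constant to be named at the end.

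The reduction proceeds in three steps. First, pass from $\Z$ to its common refinement with $\X_0$: by Claim~\ref{claim:common-refinement}, since $|\X_0| = 2^{17}\cdot 4^s = \poly(1/p)$ by~(\ref{eq:X0}) and $\Z$ is $p^5$-regular, the refinement $\Z \cap \X_0$ is $\sqrt{8|\X_0|\,p^5}$-regular, and $8|\X_0|p^5 = O(p^4) = O(p^4)$, so $\Z\cap\X_0$ is $p^{3/2}$-regular, say. This handles the ``$\Z\preceq\X_0$'' hypothesis at the cost of only a mild loss in the regularity parameter and blowing up $|\Z|$ by at most a factor $|\X_0| = \poly(1/p)$, which is harmless for a tower-type lower bound. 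Second, upgrade ``$\e$-regular'' (a $(1-\e)$-fraction of pairs are regular) to ``perfectly $\e'$-regular'' (all pairs regular): discard from $\Z\cap\X_0$ all clusters incident to many irregular pairs --- the standard cleaning argument shows one can delete an $O(\sqrt{\e})$-fraction of the vertices so that the remaining clusters form a partition (of a slightly smaller vertex set, or after redistributing the deleted vertices) all of whose pairs are $O(\sqrt{\e})$-regular; alternatively, since we only need a \emph{lower bound} on $|\Z|$, it suffices to find a large sub-collection of clusters that is perfectly regular, so $|\Z|$ only goes down by a constant factor. Either way we obtain a perfectly $\tfrac{1}{16}p$-regular partition $\Z'$ (note $p^{3/4} \le \tfrac{1}{16}p$ fails --- so one must be slightly careful: actually $p^5 \ll \tfrac{1}{16}p$, and after the two lossy steps we still have regularity parameter $\ll \tfrac1{16}p$ because we started from the very strong $p^5$; this is exactly why the theorem statement uses $p^5$ rather than, say, $p^2$). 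Third, absorb the deletion of vertices into the ``$\d$-close'' budget: deleting an $O(\sqrt{p^5})=o(\d_0)$-fraction of vertices changes at most an $o(\d_0)$-fraction of edges of a density-$p$ graph relative to $|E(\Gs)|$ (using that $\Gs$ is regular of density $p$, so deleting a $\mu$-fraction of vertices removes at most a $2\mu$-fraction of edges), so the graph underlying $\Z'$ is $\d$-close to (a blow-up of) $\Gs$ with $\d = \d_0 + o(\d_0) \le 2^{-32}$.

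Now apply Theorem~\ref{theo:LB} to $\Z'$ with this $\d$: we conclude $\Z' \preceq_\g \X_s$ with $\g = \max\{64\sqrt{\d}, p^{1/7}\}$, which we have arranged to be at most a small constant. Since $\Z'$ is (essentially) equitable and $\Z'\preceq_\g\X_s$, all but a $\g$-fraction of the clusters of $\Z'$ are $\g$-contained in members of $\X_s$; as $\g < 1/2$, at most one cluster of $\Z'$ can be $\g$-contained in any single $X\in\X_s$ when $|\Z'| \le 2|\X_s|$, so either $|\Z'| > 2|\X_s|$ or a $(1-\g)$-fraction of $\Z'$ injects into $\X_s$, giving $|\Z'| \ge (1-\g)^{-1}\cdot$(number of distinct $X\in\X_s$ hit) --- in either case $|\Z'| = \Omega(|\X_s|)$. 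By~(\ref{eq:Xs}), $|\X_s| \ge n_s \ge \twr(s/2)$, and $s = \log\frac1p$, so $|\Z'| \ge \twr(c'\log\frac1p)$ for an absolute constant $c'$. Tracing back through the at-most-$\poly(1/p)$ and constant-factor losses from $\Z$ to $\Z'$, we get $|\Z| \ge |\Z'|/\poly(1/p) \ge \twr(c\log\frac1p)$ for a slightly smaller absolute constant $c$, as claimed. The main obstacle --- which is entirely deferred here --- is of course Theorem~\ref{theo:LB} itself; within \emph{this} proof the only real point requiring care is step two, making sure the regularity parameter stays below $\tfrac1{16}p$ through the common-refinement and cleaning steps (this is precisely the role of the generous exponent $5$ in $p^5$), and checking that the cleaning can be done without destroying equitability, which is why passing to a large perfectly-regular sub-family rather than insisting on an honest equipartition is the cleanest route.
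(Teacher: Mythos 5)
Your overall strategy (refine by $\X_0$ via Claim~\ref{claim:common-refinement}, upgrade to perfect $\tfrac1{16}p$-regularity, apply Theorem~\ref{theo:LB}, then count) matches the paper's, but your step two has a genuine gap, and it is exactly the step where the real work lies. Both of your routes to ``perfectly regular'' fail. The ``standard cleaning'' by deleting an $O(\sqrt{\e})$-fraction of the vertices does not exist: the irregular pairs of an $\e$-regular partition need not be concentrated on a small set of clusters (they can, e.g., form a perfect matching on all clusters, which is consistent with $\e$-regularity once the order exceeds $1/\e$), so no small vertex deletion kills \emph{all} irregular pairs; moreover vertex deletion is not an operation the $\d$-closeness framework allows, and redistributing the deleted vertices would have to be shown not to recreate irregular pairs. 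Your alternative of passing to a perfectly regular sub-collection of clusters is worse: by a Tur\'an-type obstruction the largest such sub-collection can be as small as $O(1/\e)=\poly(1/p)$ clusters (take the irregularity graph to be a disjoint union of cliques of size $\approx \e k$), which is useless against a tower-type target, and in any case a sub-collection is not a partition of $V(\Gs)$, so Theorem~\ref{theo:LB} cannot be applied to it. The paper's fix (Corollary~\ref{coro:LB-X0}) is to modify \emph{edges}, not vertices: delete every edge lying in an irregular pair of $\Z\cap\X_0$. Those pairs carry at most $\e n^2$ edges with $\e=\sqrt{8|\X_0|p^5}=2^{10}p^{3/2}\le \tfrac14\d p$, i.e.\ at most $\d\, e(\Gs)$ edges, so the modified graph is still $2\d$-close to $\Gs$; and since a pair of density zero is trivially regular, the \emph{same} partition $\Z\cap\X_0$ becomes perfectly $\tfrac1{16}p$-regular for the modified graph. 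This is precisely where the exponent $5$ is spent: not merely to keep the regularity parameter below $\tfrac1{16}p$ after the common refinement, as you say, but to make the total weight of irregular pairs negligible relative to $\d p n^2$, so the repair fits inside the closeness budget.

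A secondary problem is your final counting. The assertion that ``at most one cluster of $\Z'$ can be $\g$-contained in any single $X\in\X_s$'' is false (many clusters can sit mostly inside the same part), and you cannot assume $\Z'$ is ``essentially equitable'': Theorem~\ref{theo:LB3} makes no equitability assumption on $\Z$, and $\Z\cap\X_0$ is in general not equitable. The correct argument is the one in Claim~\ref{claim:refinement-sze}: map each cluster that is $1/4$-contained in some part of $\X_s$ to that part, and count \emph{vertices} to see that the image contains at least half the parts of the equitable partition $\X_s$; this gives $|\Z\cap\X_0|\ge\tfrac12|\X_s|$ and hence $|\Z|\ge\tfrac12|\X_s|/|\X_0|$, which is still tower-type since $|\X_0|=\poly(1/p)$. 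With these two repairs your reduction becomes the paper's proof.
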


Note that Theorem~\ref{theo:LB3} does not
require the partition $\Z$ to be equitable.
To prove Theorem~\ref{theo:LB3} we will need the following corollary of Theorem~\ref{theo:LB}.
\begin{coro}\label{coro:LB-X0}
Let $p^{2/7} \le \d \le 2^{-33}$.
If $\Z$ is a $p^5$-regular partition of a graph that is $\d$-close to $\Gs$ then $\Z \cap \X_0 \preceq_\g \X_s$ with $\g = 128\sqrt{\d}$.
\end{coro}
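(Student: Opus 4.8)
The plan is to deduce Corollary~\ref{coro:LB-X0} from Theorem~\ref{theo:LB} by first refining $\Z$ to a partition that refines $\X_0$, and then checking that the hypotheses of Theorem~\ref{theo:LB} are still satisfied (with a slightly worse regularity parameter). Concretely, let $G'$ be a graph that is $\d$-close to $\Gs$ and let $\Z$ be a $p^5$-regular partition of $G'$. Set $\Z' = \Z \cap \X_0$, the common refinement. Since $\X_0$ has order $|\X_0| = 2n_0 = 2^{17}\cdot 4^s$ by~(\ref{eq:X0}), Claim~\ref{claim:common-refinement} tells us that $\Z'$ is a $\sqrt{8|\X_0| p^5}$-regular partition of $G'$. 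The first step is therefore a bookkeeping computation: using $p = 2^{-s}$ we have $|\X_0| = 2^{17}\cdot 2^{2s} = 2^{17}/p^2$, so $8|\X_0|p^5 = 2^{20}p^3$, and hence $\Z'$ is $2^{10}p^{3/2}$-regular. In particular, since $p \le 2^{-400}$ by~(\ref{eq:p-ub}), we have $2^{10}p^{3/2} \le p^{1/7} \cdot p^{5/4} \le \tfrac1{16}p$ comfortably (any crude bound of this shape suffices — $2^{10}p^{3/2} \le \tfrac1{16}p$ is equivalent to $p^{1/2} \le 2^{-14}$, which holds since $p \le 2^{-400}$), so $\Z'$ is in fact $\tfrac1{16}p$-\emph{perfectly} regular? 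No—$\Z'$ is only $2^{10}p^{3/2}$-regular in the averaged sense, not perfectly regular. This is the one point that needs care, and I address it in the next paragraph.

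The issue is that Theorem~\ref{theo:LB} requires a \emph{perfectly} $\tfrac1{16}p$-regular partition, whereas Claim~\ref{claim:common-refinement} only yields regularity in the averaged (weighted) sense. To bridge this, I would either (a) observe that Theorem~\ref{theo:LB}'s proof (in Subsection~\ref{subsec:LB-proof}) in fact only uses the averaged regularity of $\Z$—which seems plausible given that the overview in Subsection~\ref{subsec:LB-overview} sums over clusters—or, more cleanly, (b) apply Theorem~\ref{theo:LB} with its stated hypothesis after noting that the natural reading is that $\Z$ there may be taken to be $\e$-regular in the averaged sense with $\e = \tfrac1{16}p$; the hypothesis ``$\d \le 2^{-32}$'' of Theorem~\ref{theo:LB} is met since $\d \le 2^{-33} \le 2^{-32}$, and the parameter $\g$ in Theorem~\ref{theo:LB} is $\max\{64\sqrt\d,\,p^{1/7}\}$. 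Since we are assuming $\d \ge p^{2/7}$, we have $64\sqrt\d \ge 64 p^{1/7} \ge p^{1/7}$, so $\g_{\mathrm{Thm}} = 64\sqrt\d$. Now $\Z' \preceq \X_0$ by construction, and $\Z'$ is a $2^{10}p^{3/2}$-regular (averaged) partition of a graph $\d$-close to $\Gs$ with $2^{10}p^{3/2} \le \tfrac1{16}p$; Theorem~\ref{theo:LB} then gives $\Z' \preceq_{64\sqrt\d} \X_s$. Since the statement of Corollary~\ref{coro:LB-X0} asks for $\Z \cap \X_0 = \Z' \preceq_{\g} \X_s$ with $\g = 128\sqrt\d \ge 64\sqrt\d$, and $\preceq_{\b}$ is monotone in $\b$ (a larger tolerance is weaker), we are done.

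The main obstacle I anticipate is exactly the mismatch between ``perfectly $\e$-regular'' in Theorem~\ref{theo:LB} and the averaged $\sqrt{8k\e}$-regularity produced by Claim~\ref{claim:common-refinement}: one cannot upgrade averaged regularity to perfect regularity for free. The cleanest resolution is to state and use a version of Theorem~\ref{theo:LB} for averaged-regular partitions (which the authors presumably intend, since passing to a common refinement is the standard device and inevitably destroys perfect regularity), and to absorb the resulting parameter loss into the generous slack $128\sqrt\d$ versus $64\sqrt\d$. The remaining steps—the computation of $|\X_0|$ in terms of $p$, checking $2^{10}p^{3/2} \le \tfrac1{16}p$, checking $\d \le 2^{-33}$ implies $\d \le 2^{-32}$, and checking $p^{2/7} \le \d$ gives $64\sqrt\d \ge p^{1/7}$ so that $\g_{\mathrm{Thm}} = 64\sqrt\d \le 128\sqrt\d = \g$—are all routine one-line estimates using only $p \le 2^{-400}$.
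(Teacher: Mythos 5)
You have the right skeleton (pass to $\Z\cap\X_0$, invoke Claim~\ref{claim:common-refinement} to get $\e$-regularity with $\e=2^{10}p^{3/2}$, then feed the result into Theorem~\ref{theo:LB}), and you correctly spot the obstacle: Theorem~\ref{theo:LB} demands a \emph{perfectly} $\frac1{16}p$-regular partition, while the common refinement is only regular in the averaged sense. But neither of your two proposed resolutions closes this gap. Option (b) — reading Theorem~\ref{theo:LB} as if its hypothesis were averaged regularity — is simply not what the theorem says, and option (a) is unverified and in fact problematic: in the proof of Theorem~\ref{theo:LB}, the bound $e_{G'}(Z_1,Z'\cap Y')-e_{G'}(Z_2,Z'\cap Y')\le\frac18 p\a|Z||Z'|$ is applied to \emph{every} $Z'\in\Z$ for each bad cluster $Z\in\D_r$; under averaged regularity a single (small) bad cluster $Z$ could have all of its pairs $(Z,Z')$ irregular, so the argument does not go through without modification. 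So as written, the proposal has a genuine gap at its central step.

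The paper's fix is concrete and avoids touching the proof of Theorem~\ref{theo:LB}: instead of relaxing the theorem, modify the graph. Delete all edges between the irregular pairs of $\Z_0:=\Z\cap\X_0$. After this deletion every pair of $\Z_0$ is either $\e$-regular or empty, hence $\Z_0$ is perfectly $\frac1{16}p$-regular for the new graph (using $\e\le\frac1{16}p$). The cost is controlled precisely because $\Z_0$ is $\e$-regular in the averaged sense: the deleted edges number at most $\e n^2$, and here one needs the sharper estimate $\e\le\frac14 p^{9/7}\le\frac14\d p$ (this is where the hypothesis $\d\ge p^{2/7}$ enters, beyond the use you made of it), so that at most $\frac14\d p n^2=\d\, e(\Gs)$ edges are deleted and the new graph is $2\d$-close to $\Gs$. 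One then applies Theorem~\ref{theo:LB} with closeness parameter $2\d$; this is why the corollary assumes $\d\le2^{-33}$ (so that $2\d\le2^{-32}$) and why the final tolerance is $\max\{64\sqrt{2\d},\,p^{1/7}\}\le128\sqrt{\d}$ — the factor $2$ in $128$ comes from the doubling of $\d$, not from absorbing the averaged-versus-perfect discrepancy as your writeup suggests. With this edge-deletion step inserted, the rest of your computations ($|\X_0|=2^{17}p^{-2}$, $\e=2^{10}p^{3/2}$, $128\sqrt\d\ge p^{1/7}$) are correct and the proof matches the paper's.
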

\begin{proof}
Put $\Z_0 = \Z \cap \X_0$.
Recall that $|\X_0| = 2^{17} p^{-2}$ by~(\ref{eq:X0}).
By Claim~\ref{claim:common-refinement}, the partition $\Z_0$ is $\e$-regular with $\e = \sqrt{8|\X_0| p^5} = 2^{10} p^{3/2}$.
Note that
\begin{equation}\label{eq:e-bound}
\e \le \frac14 p^{9/7} \le \frac14\d p \le \frac{1}{16}p \;,
\end{equation}
where the first inequality uses~(\ref{eq:p-ub}) to bound $2^{10}p^{1/2} \le \frac14 p^{2/7}$, and the second and third inequalities use the assumed bounds on $\d$.
Since $\Z_0$ is an $\e$-regular partition of a graph that is $\d$-close to $\Gs$, it is also a perfectly $\frac{1}{16}p$-regular partition of a graph that is $2\d$-close to $G$; indeed, such a graph is obtained by removing all edges between pairs of $\Z_0$ that are not $\e$-regular, of which there are, by~(\ref{eq:e-bound}), at most $\frac14\d p |V(G)|^2 = \d e(\Gs)$ .
We apply Theorem~\ref{theo:LB} with (the not necessarily equitable) $\Z_0$, using the fact that $\Z_0 \preceq \X_0$ and $2\d \le 2^{-32}$. It follows that $Z_0 \preceq_{\g} \X_s$ with $\g \le \max\{128\sqrt{\d},\, p^{1/7}\} = 128\sqrt{\d}$, where we again used the assumed lower bound on $\d$.
\end{proof}

We will also need the following fact about the order of an approximate refinement.
\begin{claim}\label{claim:refinement-sze}
If $\Q \preceq_{1/4} \P$ and $\P$ is equitable then $|\Q| \ge \frac12|\P|$.
\end{claim}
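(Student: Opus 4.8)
\textit{Proof proposal.} The plan is to lower bound $|\Q|$ by the number of parts of $\P$ that are ``hit'', i.e.\ that contain some part of $\Q$ almost entirely inside them, and then to argue that many parts of $\P$ must be hit because a $\frac34$-fraction of the ground set lies in parts $Q\in\Q$ of this type, while no single part of $\P$ can absorb too much of that mass. Concretely, write $k=|\P|$ and let $n$ be the size of the ground set. Call a part $Q\in\Q$ \emph{good} if $Q\in_{1/4}\P$, and for each good $Q$ fix one part $P(Q)\in\P$ with $Q\sub_{1/4}P(Q)$; say $P\in\P$ is \emph{hit} if $P=P(Q)$ for some good $Q$. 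Since every hit part witnesses at least one good part mapped to it, $|\Q|\ge\#\{\text{good parts}\}\ge\#\{\text{hit parts}\}$, so it suffices to show at least $\frac12 k$ parts of $\P$ are hit.

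The first estimate is that the total size of the good parts is at least $\frac34 n$: this is immediate from the definition of $\Q\preceq_{1/4}\P$, which says exactly that the parts $Q$ with $Q\notin_{1/4}\P$ have total size at most $\frac14 n$. The second estimate is that for each fixed $P\in\P$, the good parts $Q$ with $P(Q)=P$ have total size at most $\frac43|P|$: indeed, such $Q$ are pairwise disjoint (distinct parts of $\Q$), each satisfies $|Q\cap P|\ge\frac34|Q|$, and the sets $Q\cap P$ are disjoint subsets of $P$, so $\frac34\sum_{Q}|Q|\le\sum_{Q}|Q\cap P|\le|P|$. Combining the two estimates, $\frac34 n\le\sum_{P\ \text{hit}}\frac43|P|\le\frac43\,(\max_{P\in\P}|P|)\cdot\#\{\text{hit parts}\}$. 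Since $\P$ is equitable we may assume all its parts have size exactly $n/k$ (as done elsewhere in the paper), or more generally bound $\max_P|P|\le\lceil n/k\rceil$; plugging this in yields $\#\{\text{hit parts}\}\ge\frac{9}{16}k\ge\frac12 k$, which is the claim.

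The only subtlety is bookkeeping around the threshold $\tfrac14<\tfrac12$: this is what makes $P(Q)$ essentially unique (a set cannot be $\frac14$-contained in two disjoint parts) and, more importantly, keeps the per-part absorption bound $\frac43|P|$ strictly below $2|P|$, which is exactly what gives room to conclude $\frac{9}{16}k\ge\frac{k}{2}$ with slack. If one prefers not to invoke the equal-parts simplification, the ceiling $\lceil n/k\rceil$ introduces only a negligible $1+o(1)$ factor (and in the intended regime $n$ is enormous), so there is no real obstacle beyond keeping the constants straight.
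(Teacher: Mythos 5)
Your proof is correct and follows essentially the same route as the paper: both map each good part $Q$ to a part of $\P$ that $\frac14$-contains it and then use equitability to show the image must comprise many parts of $\P$. The only difference is the accounting direction — you lower-bound the mass of good parts packed into hit parts (giving the slightly stronger $\frac{9}{16}|\P|$), whereas the paper upper-bounds the mass of the parts of $\P$ outside the image by $\frac12 n$; your handling of the equal-part-size simplification matches the convention the paper itself adopts.
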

\begin{proof}
Let the function $\pi$ map the parts $Q \in \Q$ satisfying $Q \sub_{1/4} P$ for some $P \in \P$ to that (unique) $P$.
Denoting by $n$ the number of elements in the underlying set, observe that the total number of elements in the parts $P \in \P$ that are not in the image of $\pi$ is at most $\frac14 n + \sum_{Q \in \Q} \frac14|Q| 
= \frac12 n$.
As $\P$ is equitable, this means there are at least $\frac12|\P|$ parts $P \in \P$ in the image of $\pi$, and therefore $|\Q| \ge \frac12|\P|$, as claimed.
\end{proof}

\begin{proof}[Proof of Theorem~\ref{theo:LB3}]
Suppose $\Z$ is a $p^5$-regular partition of a graph that is $2^{-33}$-close to $\Gs$.
By Corollary~\ref{coro:LB-X0}, the common refinement $\Z_0 := \Z \cap \X_0$ satisfies $\Z_0 \preceq_{1/4} \X_s$.
By Claim~\ref{claim:refinement-sze}, $|\Z_0| \ge \frac12|\X_s|$.
Since $|\Z_0| \le |\Z||\X_0|$ we get $|\Z| \ge \frac12|\X_s|/|\X_0|$, which completes the proof by~(\ref{eq:X0}) and~(\ref{eq:Xs}).
\end{proof}

\subsection{Proof of Theorem~\ref{theo:LB}}\label{subsec:LB-proof}

\begin{proof}[Proof of Theorem~\ref{theo:LB}]
Put $G=\Gs$ and $n=|V(G)|$.
Let $\Z$ be a perfectly $\frac{1}{16} p$-regular partition of a graph $G'$ on $V(G)$, and suppose $\Z \preceq \X_0$ yet $\Z \npreceq_\g \X_s$.
Our goal is to prove that $G'$ is not $\d$-close to $G$, that is, $|E(G) \triangle E(G')| > \d \cdot p (n/2)^2$.
%

Let $1 \le R \le s$ be the smallest integer such that $\Z \npreceq_\g \X_R$.
For each $1 \le r \le R$ let
$$\D_r = \{ Z \in \Z \,:\, Z \notin_\g \X_r \text{ and } Z \in_\g \X_{r-1} \} \;,$$
%
We let
$\B_r$, with $1 \le r \le s$, be the set of vertices that either lie in some $Z \notin_\g \X_{r-1}$ or lie in some $Z \sm X$ with $Z \sub_\g X \in \X_{r-1}$.
More formally,
$$\B_r = V(G) \sm \bigcup_{\substack{X \in \X_{r-1}, Z\in \Z:\\Z \in_\g X}} (Z \cap X) \;.$$
%
Note that since $\X_s \preceq \cdots \preceq \X_0$ we have that $\B_1 \sub \cdots \sub \B_s$, and furthermore, that $Z \in \D_r$ for at most one value of $r$. 
Throughout, if $\F$ is a family of disjoint sets we denote by $\norm{\F} = \big|\bigcup_{F \in \F} F \big|$ the ``total'' size of $\F$.
Put $\D = \bigcup_{r=1}^R \D_r$.
Since $\Z \npreceq_\g \X_{R}$ yet $\Z \preceq_\g \X_{R-1}$, we have
\begin{equation}\label{eq:LB-init}
\norm{\D} > \g n
\quad \text{ yet } \quad
|\B_R| \le 2\g n \le 2^{-9}n \;,
\end{equation}
where the first inequality uses the fact that $\D = \{ Z \in \Z \,|\, Z \notin_\g \X_{R} \}$ as every $Z \in \Z$ satisfies $Z \in_0 \X_0$ by assumption,
and the last inequality uses the assumed bound on $\d$ as well as~(\ref{eq:p-ub}) in order to bound
\begin{equation}\label{eq:LB-g}
\g \le \max\{64 \cdot 2^{-16},p^{1/7}\} = 2^{-10} \;.
\end{equation}

Let $Z \in \D_r$.
Let $X \in \X_{r-1}$ be the unique cluster such that $Z \sub_\g X$ (recall $\g < 1/2)$.
Let $Y \in \X_{r-1}$ be one of the $|\X_{r-1}|/2^r$ clusters with $d_{\Gs}(X,Y) \neq 0$ (recall Claim~\ref{claim:cluster-degree}).
Call $Y$ \emph{good} if $Z \nsubseteq_{\frac{1}{16}\g} X_{Y,i}$ for each $i \in \{0,1\}$.
Denote by $g(Z)$ the set of all clusters that are good for $Z$.
We claim that for every $Z \in \D_r$ we have
\begin{equation}\label{eq:LB-gZ}
|g(Z)| \ge \frac{1}{6}|\X_{r-1}|/2^r \;.
\end{equation}
Indeed, if $Z \nsubseteq_{\frac{1}{2}\g} X$ this is clear (actually in this case $|g(Z)|=|\X_{r-1}|/2^r$), and otherwise this follows from Claim~\ref{claim:property_1-6} with $\zeta'=\g$ and $\zeta=\frac12\g$.

\newcommand{\Ys}{Y^*}

Put $\a=\frac{1}{16}\g$.
Fix $Y \in g(Z)$, and let $\ell \in \{0,1\}$ satisfy $|Z \cap X_{Y,\ell}| \ge |Z \cap X_{Y,1-\ell}|$.
Since $Z \sub_\g X$ we have
\begin{equation}\label{eq:LB-Zsupset}
|Z \cap X_{Y,\ell}| \ge \frac12|Z \cap X| \ge \frac12(1-\g)|Z| \ge \frac{7}{16}|Z| \;,
\end{equation}
where the last inequality uses~(\ref{eq:LB-g}).
Furthermore, since $Z \nsubseteq_{\frac{1}{16}\g} X_{Y,\ell}$ we have
\begin{equation}\label{eq:LB-Z2}
|Z \sm X_{Y,\ell}| \ge \a|Z| \;.
\end{equation}
Let $Z_1$ be an arbitrary subset of $Z \cap X_{Y,\ell}$ of size $\a|Z|$, and
let $Z_2$ be an arbitrary subset of $Z \sm X_{Y,\ell}$ of size $\a|Z|$ (both choices are possible by~(\ref{eq:LB-Zsupset}) and~(\ref{eq:LB-Z2})).
By Claim~\ref{claim:densities} there is $\ell' \in \{0,1\}$ with $d_{G}(X_{Y,\ell},Y_{X,\ell'}) \neq 0$. Put
$$Y' = Y_{X,\ell'} \quad\text{ and }\quad \Ys = Y' \sm \B_r \;.$$
Notice $\a \ge \frac{1}{16} p^{1/7} \ge \frac{1}{16} p$. 
As $\Z$ is a perfectly $\frac{1}{16} p$-regular partition of $G'$,
for every $Z' \in \Z$ we have
$$e_{G'}(Z_1,Z' \cap Y') - e_{G'}(Z_2,Z' \cap Y')
= (d_{G'}(Z_1,Z' \cap Y') - d_{G'}(Z_2,Z' \cap Y'))\a|Z||Z' \cap Y'|
\le \frac18 p\a |Z||Z'| \;,$$
where, denoting $W = Z' \cap Y'$, the last inequality bounds $d_{G'}(Z_1,W) - d_{G'}(Z_2,W)$ by $2\cdot\frac{1}{16} p$ if $|W| \ge \frac{1}{16} p|Z'|$, and otherwise bounds $d_{G'}(Z_1,W) - d_{G'}(Z_2,W)$ by $1$.
Summing over all $Z' \sub_\g Y$ gives
\begin{align}\label{eq:LB-discG'}
\begin{split}
e_{G'}(Z_1,\Ys)-e_{G'}(Z_2,\Ys) &= \sum_{\substack{Z' \in \Z:\\Z' \sub_\g Y}}
(e_{G'}(Z_1,Z' \cap Y')-e_{G'}(Z_2,Z' \cap Y'))  \\
&\le \frac{1}{4} p \cdot \a|Z||Y^*|
\le \frac{1}{4} p\a|Z||Y| \;,
\end{split}
\end{align}
where the equality uses the fact that
$ \Ys = \bigcup_{Z' \sub_\g Y} Z' \cap Y'$
and the first inequality uses the fact that $|Z'| \le 2|Z' \cap Y|$ for every $Z' \sub_\g Y$ as $\g \le \frac12$.
On the other hand, in $G$ we have that (notice $Y' \cap \B_r = Y' \sm Y^*$)
\begin{align}\label{eq:LB-discG}
\begin{split}
e_{G}(Z_1,\Ys)-e_{G}(Z_2,\Ys)
&\ge (d_{G}(Z_1,Y')-d_{G}(Z_2,Y'))\a|Z||Y'|-e_{G}(Z_1,Y' \cap \B_r)\\
&\ge \frac38 2^r p \cdot \a|Z|\cdot\frac12|Y| - e_{G}(Z_1,Y \cap \B_R) \;,
\end{split}
\end{align}
where the second inequality uses Claim~\ref{claim:property_degrees} (all three cases) and the fact that $Y' \sub Y$ and $\B_r \sub \B_R$.

For every pair of disjoint subsets $S,T \sub V(G)$,
denote $\Delta(S,T)=|E_{G}(S,T) \triangle E_{G'}(S,T)|$.
Note that $\Delta(S,T) \ge |E_{G}(S,T)-E_{G'}(S,T)|$.
We get
\begin{align}
\begin{split}\label{eq:LB-modZY}
\Delta(Z,Y) &\ge \Delta(Z_1,\Ys)+\Delta(Z_2,\Ys) \\
&\ge (e_{G}(Z_1,\Ys)-e_{G'}(Z_1,\Ys))+(e_{G'}(Z_2,\Ys) - e_{G}(Z_2,\Ys)) \\
&\ge \frac{3}{16} 2^r p \a|Z||Y|-e_{G}(Z_1,Y \cap \B_R) - \frac{1}{4} p \a|Z||Y|
\ge \frac{1}{16} 2^r p \a|Z||Y| - e_{G}(Z_1, Y \cap \B_R)
\end{split}
\end{align}
where the third inequality uses~(\ref{eq:LB-discG'}) and~(\ref{eq:LB-discG}),  and the last equality bounds $p \le \frac12 2^r p$ as $r \ge 1$.
Recall that the above applies for every choice of a subset $Z_1$ of $Z \cap X_{Y,\ell}$ of size $\a|Z|$.
Note that by choosing such $Z_1$ uniformly at random, we have
$$\Ex[e_{G}(Z_1, Y \cap \B_R)]
= \frac{|Z_1|}{|Z \cap X_{Y,\ell}|} \cdot e_{G}(Z \cap \X_{Y,\ell}, Y \cap \B_R)
\le \frac{16}{7}\a \cdot e_{G}(Z, Y \cap \B_R)\;,$$
where the inequality uses~(\ref{eq:LB-Zsupset}).
Thus, there is $Z_1$ for which $e_{G}(Z_1, Y \cap \B_R) \le \frac{16}{7}\a \cdot e_{G}(Z, Y \cap \B_R)$.
Substituting into~(\ref{eq:LB-modZY}) implies that 
$$\Delta(Z,Y) \ge \a\Big(\frac{1}{16} 2^r p \cdot |Z||Y| - \frac{16}{7}e_{G}(Z, Y \cap \B_R) \Big) \;.$$
Summarizing, for every $1 \le r \le R$ and every $Z \in \D_r$ we have, using~(\ref{eq:LB-gZ}), that
\begin{equation}\label{eq:LB-modZV}
\Delta(Z,V(G)) \ge \sum_{Y \in g(Z)} \Delta(Z,Y) \ge \a\Big( \frac{pn|Z|}{96} - \frac{16}{7}e_{G}(Z,\B_R) \Big) \;.
\end{equation}

%
%
As $G$ is bipartite, let $U,V$ denote the vertex classes of $G$,
and note that every $Z \in \Z$ is contained in either $U$ or $V$, since $\Z \preceq \X_0$.
Assume without loss of generality that
$\D' := \{ Z \in \D\,:\, Z \sub U\}$ 
satisfies $\norm{\D'} \ge \frac12\norm{\D}$.
We can now prove a lower bound on $|E(G)\triangle E(G')|$;
\begin{align*}
|E(G)\triangle E(G')| &\ge \sum_{r=1}^R \sum_{Z \in \D_r} \Delta(Z,V(G))
\ge \a\sum_{Z \in \D'}\Big( \frac{pn|Z|}{96} - \frac{16}{7}e_{G}(Z,\B_R) \Big) \\
&= \a \Big( \frac{pn\norm{\D'}}{96} - \frac{16}{7}e_{G}\Big(\bigcup_{Z \in \D'} Z,\B_R\Big) \Big)
\ge \a \norm{\D'} \Big( \frac{pn}{96} - \frac{16}{7}\frac{pn}{256} \Big) \\
&\ge \frac{1}{32}\g^2 \cdot \frac{1}{672} pn^2
> \frac{(64\sqrt{\d})^2}{2^{16}}p n^2
= \d p (n/2)^2 \;,
\end{align*}
where the second inequality uses~(\ref{eq:LB-modZV}),
the third inequality uses Claim~\ref{claim:property_qr}
(with $A=\bigcup_{Z \in \D'} Z$ and $B=\B_r$ while relaying on~(\ref{eq:LB-init}) to bound $|A| \ge \g n \ge p^{1/7} n$ and $|B| \le 2^{-9} n$),
the fourth inequality uses~(\ref{eq:LB-init}) to bound $\norm{\D'}$ from below,
and the last inequality uses the fact that $\g \ge 64\sqrt{\d}$.
Thus, we have shown that $G'$ is not $\d$-close to $G$,
completing the proof.
\end{proof}

\subsection{SRAL and lower bounds for hypergraph regularity}\label{subsec:LB-colors}

\renewcommand{\G}{\mathcal{G}}
\newcommand{\Gss}{\mathcal{G}_s^*}

We start with proving Theorem~\ref{theo:LB2} by constructing a decomposition of $K_{N,N}$ into graphs witnessing~(\ref{eq:lower}).
First, we generalize the definition of a modified blow-up of a graph to a definition of an edge coloring of a graph.

\paragraph*{Multicolored modified blow-up.}
Let $\G$ be a $q$-edge-colored graph whose $q$ graphs are each $d$-regular.
Let $n \in \N$, $\a,\b\in[0,1]$ be such that there exists an $(n,d,\a,\b)$-sequence.
We define a $2q$-edge-colored graph $\G'=\G(n,d,\a,\b)$ as follows. Each vertex $x$ of $\G$ is replaced by a set of $n$ new vertices $X$. Each edge $(x,y)$ of $\G$ in color $i$ is replaced by (using the notation of Subsection~\ref{subsec:blow-up}) two copies of $K_{n,n}$ in color $i_1$,
between $(X_{Y,0},Y_{X,0})$ and between $(X_{Y,1},Y_{X,1})$, as well as two copies of $K_{n,n}$ in color $i_2$, between $(X_{Y,0},Y_{X,1})$ and between $(X_{Y,1},Y_{X,0})$.
Here, $i_1$ and $i_2$ are two new colors, hence $\G'$ is indeed $2q$-edge-colored. Importantly, by using both choices available in the definition of a modified blow-up (see Subsection~\ref{subsec:blow-up}),
the graphs 
of color $i_1$ and of color $i_2$ are each a modified  blow-up of the graph of $\G$ of color $i$.


\paragraph*{Multicolored construction.}
Consider the $2^s$-edge-colored bipartite graph obtained by iterating the above
$s$ times starting from the graph $K_{n_0,n_0}$, where $s,n_0$ and the parameters $(n,d,\a,\b)$ for each iteration are chosen as in Subsection~\ref{subsec:Construction}.
Let $\Gss$ be any blow-up of the colored graph above, meaning that each edge in color $i$ is replaced by a complete bipartite graph in color $i$.
It follows from the definition of a multicolored modified blow-up above that each of the $2^s$ graphs of $\Gss$ is of the form $\Gs$.
In particular, each is a bipartite graph of density $p = 2^{-s}$, and together they form a partition of the edges of a $K_{N,N}$.

\begin{proof}[Proof of Theorem~\ref{theo:LB2}]
Follows from the construction above together with Theorem~\ref{theo:LB3}.
\end{proof}

Let us now explain the relevance of the multicolored lower bound to lower bounds for hypergraph regularity.
As part of the usual proof of the $3$-graph regularity lemma, one is confronted with the following task; given a complete bipartite graph $K_{N,N}$ whose edges are partitioned into sparse graphs---or equivalently, are colored by many different colors---find a partition that is $\e$-regular (with $\e$ depending on the density) for all graphs simultaneously.\footnote{This task is iterated in the proofs of the $3$-graph regularity lemma; combined with the fact that $M(\e) \ge \twr(\poly(1/\e))$, this explains their Wowzer-type bounds.}
As explained before Proposition~\ref{theo:HyperRed}, it in fact suffices to solve this task with the additional flexibility of modifying $\d N^2$ of the edges.
This raises the question of whether the additional flexibility allows one to do better than a tower-type bound.
Using the multicolored graph $\Gss$ constructed above,
Theorem~\ref{theo:LB-color} below shows
that this task remains hard even if edge modifications are allowed.
In fact, it remains hard even if the partition is required to be regular only for a \emph{negligible} fraction of the graphs.
We emphasize that Theorem~\ref{theo:LB-color} does not follow from~(\ref{eq:lower}), but rather requires the fact that $K_{N,N}$ can be decomposed into sparse bipartite graphs, all of which are hard for SRAL.

\begin{theo}\label{theo:LB-color}
Let $p^{4/7} \le \d \le 2^{-66}$.
Let $\Z$ be a partition of $V(\Gss)$, and suppose that one can swap the colors of at most $\d N^2$ edges of $\Gss$ so that $\Z$ is a $p^5$-regular partition for at least $\sqrt{\d}\cdot 2^s$ of its graphs (over $V(\Gss)$).
Then $\Z \cap \X_0 \preceq_\g \X_s$ with $\g = 128\sqrt[4]{\d}$.
In particular,
$$|\Z|\ge\twr(\Omega(\log(1/p))) \;.$$
\end{theo}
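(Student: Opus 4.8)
The plan is to reduce Theorem~\ref{theo:LB-color} to the single‑colour bound of Corollary~\ref{coro:LB-X0} by an averaging argument over the $2^s$ colour classes of $\Gss$. Recall from the construction that all $2^s$ colour classes of $\Gss$ live on the same vertex set and share the same chain of partitions $\X_0 \succeq \cdots \succeq \X_s$, that each is a graph ``of the form $\Gs$'' (so that Corollary~\ref{coro:LB-X0}, and everything it relies on, applies to each of them with these partitions), that each has exactly $pN^2$ edges, and that together they partition $E(K_{N,N})$. Let $M$ be the set of recoloured edges, so $|M| \le \d N^2$, and for each colour $i$ let $G_i,G_i'$ denote the $i$‑th colour class before and after the recolouring. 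Since an edge lies in $E(G_i)\triangle E(G_i')$ exactly when it is a recoloured edge that lost or gained colour $i$, we have $\sum_{i=1}^{2^s}|E(G_i)\triangle E(G_i')| = 2|M| \le 2\d N^2$.

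First I would locate one good colour class that is barely modified. By hypothesis $\Z$ is a $p^5$‑regular partition of $G_i'$ for at least $\sqrt\d\cdot 2^s$ values of $i$, so averaging the bound above over this set produces such an $i$ with $|E(G_i)\triangle E(G_i')| \le 2\d N^2/(\sqrt\d\,2^s) = 2\sqrt\d\cdot pN^2 = 2\sqrt\d\,|E(G_i)|$; that is, $G_i'$ is $\d'$‑close to $\Gs$ with $\d' = 2\sqrt\d$, while $\Z$ is a $p^5$‑regular partition of $G_i'$. The numerical constraint $p^{4/7}\le\d\le 2^{-66}$ is tailored precisely so that $\d' = 2\sqrt\d$ falls inside the admissible window $[p^{2/7},\,2^{-33}]$ of Corollary~\ref{coro:LB-X0} (we make no attempt to optimise constants). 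Applying that corollary gives $\Z\cap\X_0 \preceq_\g \X_s$ with $\g = 128\sqrt{\d'} = O(\sqrt[4]{\d})$, and in particular $\g \le \tfrac14$. Since $\X_s$ is equitable (being an equipartition by construction), Claim~\ref{claim:refinement-sze} then yields $|\Z\cap\X_0|\ge\tfrac12|\X_s|$, whence $|\Z| \ge |\Z\cap\X_0|/|\X_0| \ge \tfrac12|\X_s|/|\X_0|$; plugging in $|\X_0| = 2^{17}/p^2$ from~(\ref{eq:X0}) and $|\X_s| \ge \twr(\tfrac12\log\tfrac1p)$ from~(\ref{eq:Xs}) gives $|\Z| \ge \twr(\Omega(\log\tfrac1p))$, as claimed.

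Once Corollary~\ref{coro:LB-X0} is in hand the argument is short; the one substantive point — and the reason Theorem~\ref{theo:LB-color} does not follow formally from~(\ref{eq:lower}) — is that the \emph{same} partition $\Z$ must serve many colour classes at once, which is exactly what legitimises the averaging: a budget of $\d N^2$ recolourings distributed over only $\sqrt\d\cdot 2^s$ ``active'' classes must leave at least one of them modified on only an $O(\sqrt\d)$‑fraction of its edges. The only care needed is to track the factor $2$ in $\sum_i|E(G_i)\triangle E(G_i')| = 2|M|$ and check that it is absorbed by the stated ranges of $\d$ and the resulting $\g$; there is no new combinatorial obstacle beyond Corollary~\ref{coro:LB-X0} (equivalently, beyond Theorem~\ref{theo:LB}).
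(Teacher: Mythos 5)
Your proof follows the paper's argument essentially verbatim: average the recolouring budget over the at least $\sqrt{\d}\cdot 2^s$ colour classes for which $\Z$ is $p^5$-regular to find one class that is $O(\sqrt{\d})$-close to a copy of $\Gs$, apply Corollary~\ref{coro:LB-X0}, and finish with Claim~\ref{claim:refinement-sze} together with~(\ref{eq:X0}) and~(\ref{eq:Xs}). The only discrepancy is your factor $2$: the paper invokes the corollary with closeness $\sqrt{\d}$ (glossing over the fact that each recoloured edge affects two classes), which gives $\g=128\sqrt[4]{\d}$ on the nose, whereas your $\d'=2\sqrt{\d}$ sits just above the corollary's stated ceiling $2^{-33}$ (it is only $\le 2^{-32}$ under $\d\le 2^{-66}$) and yields $\g=128\sqrt{2\sqrt{\d}}$ -- a purely constant-level slack, fixable by adjusting the numerical constants, and not a new combinatorial obstacle.
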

\begin{proof}
By averaging, there are fewer than $\sqrt{\d}\cdot 2^s$ graphs $G$ of $\Gss$ for which the number of edges that are added/removed is greater than $\sqrt{\d} \cdot e(G)$.
Therefore, there exists a graph that is $\sqrt{\d}$-close to a graph of $\Gss$ for which $\Z$ is a $p^5$-regular partition.
Since every graph of $\Gss$ is of the form $\Gs$, Corollary~\ref{coro:LB-X0} implies that
$\Z \cap \X_0 \preceq_\g \X_s$ with $\g=128\sqrt[4]{\d}$, as desired.
In particular, $|\Z| \ge |\Z \cap \X_0|/|\X_0| \ge \frac12|\X_s|/|\X_0| = \twr(\Omega(\log\frac{1}{p}))$, by Claim~\ref{claim:refinement-sze} and~(\ref{eq:X0}),(\ref{eq:Xs}).
\end{proof}

\medskip

\noindent{\bf Acknowledgement:} The first author would like to thank V. R\"odl for many helpful discussions regarding
this work. In particular, Proposition \ref{theo:HyperRed} was obtained in joint discussions with him.


\appendix

\section{Proof of the (Stronger) Weak Regularity Lemma}\label{sec:Weak}

Here we give a 
proof of Theorem~\ref{theo:WRL}, which closely follows the proof in~\cite{RodlSc10}.

\begin{proof}[Proof of Theorem~\ref{theo:WRL}]
	Let $G=(V,E)$ be a graph.
	Suppose the partition $\P=\{V_1,\ldots,V_k\}$ of $V$ is not weak $\e$-regular, and let $S,T\sub V$ be disjoint sets witness this.
	Then, recalling the notation $S_i = S \cap V_i$ and $T_j = T \cap V_j$, we have
	\begin{equation}\label{eq:WRL-assumption}
	\s{S},\s{T} \ge \e\s{V} \quad \text{ and } \quad \sum_{i,j=1}^k \frac{\s{S_i}\s{T_j}}{\s{S}\s{T}} \s{d(S_i,T_j)-d(V_i,V_j)} > \e \;.
	\end{equation}
	Let $\Q$ be the refinement of $\P$ obtained by subdividing each $V_i$ into three parts, $S_i, T_i$ and $W_i:=V_i \sm (S_i \cup T_i)$.
	Put differently,
	$\Q|_{V_i}=\{S_i,T_i,W_i\}$ where $\Q|_{V_i}$ denotes the partition of $V_i$ that $\Q$ induces.
	We claim that $q(\Q) > q(\P)+\e^4$ where $q$ denotes the mean square density of a partition, that is,
	$$q(\{Z_1,\ldots,Z_r\}) = \sum_{i,j=1}^r \frac{\s{Z_i}\s{Z_j}}{\s{V}^2} d^2(Z_i,Z_j) $$
	(where the sum is over ordered pairs $(i,j)$).
	Indeed,
	\begin{align*}
	\s{V}^2(q(\Q) - q(\P)) &=
	\sum_{i,j=1}^k \Bigg( \sum_{\substack{U \in \Q|_{V_i},\\ U' \in \Q|_{V_j}}} |U||U'|d^2(U,U') - \s{V_i}\s{V_j}d^2(V_i,V_j) \Bigg) \\
	&= \sum_{i,j=1}^k \s{V_i}\s{V_j} \Bigg(
	\sum_{\substack{U \in \Q|_{V_i},\\ U' \in \Q|_{V_j}}} \frac{|U||U'|}{\s{V_i}\s{V_j}} d^2(U,U') - d^2(V_i,V_j) \Bigg) \\
	&= \sum_{i,j=1}^k \s{V_i}\s{V_j} \Bigg(
	\sum_{\substack{U \in \Q|_{V_i},\\ U' \in \Q|_{V_j}}} \frac{|U||U'|}{\s{V_i}\s{V_j}} (d(U,U') - d(V_i,V_j))^2 \Bigg) \\
	&\ge \sum_{i,j=1}^k \s{V_i}\s{V_j} \cdot
	\frac{\s{S_i}\s{T_j}}{\s{V_i}\s{V_j}} (d(S_i,T_j) - d(V_i,V_j))^2 \\
	&= \s{S}\s{T} \sum_{i,j=1}^k \frac{\s{S_i}\s{T_j}}{\s{S}\s{T}} (d(S_i,T_j) - d(V_i,V_j))^2 \\
	&\ge \s{S}\s{T} \bigg(\sum_{i,j=1}^k \frac{\s{S_i}\s{T_j}}{\s{S}\s{T}} \s{d(S_i,T_j) - d(V_i,V_j)} \bigg)^2 > \e^4 \s{V}^2 \;, 
	%
	\end{align*}
	where in the first inequality we used the fact that $S_i \in \Q|_{V_i}$ and $T_j \in \Q|_{V_j}$, in the second inequality we used Jensen's inequality, and in the third inequality we 
	used~(\ref{eq:WRL-assumption}).

	Suppose now that $\P$ is also equitable. We will use $\Q$ in order to construct an equitable refinement $\P'$ of $\P$ satisfying
	\begin{equation}\label{eq:WRL-step}
	q(\P') \ge q(\P) + \e^4/2 \;.
	\end{equation}
	Put $s = \s{V}/bk$ with $b = \ceil{8/\e^4} \in \N$. 
	Let $\P'$ be the equipartition obtained from $\P$ by subdividing each $V_i \in \P$
	into parts of size $\floor{s}$ or $\floor{s}+1$,\footnote{
		%
		%
		%
		Simply divide $\s{V_i}$ by $\floor{s}$; write $\s{V_i}=a\floor{s} + m = (a-m)\floor{s}+m(\floor{s}+1)$ and observe $m \le b\le \floor{\s{V_i}/s} \le a$.}
	%
	%
	in such a way that every part $U \in \P'|_{V_i}$ satisfies either $U \sub S_i$, $U \sub T_i$ or $U \sub W_i$ except for at most three parts $U'_i,U''_i,U'''_i$ in $\P'|_{V_i}$.
	Note that $\P'$ refines $\P$, but not $\Q$ (because of the sets $U'_i,U''_i,U'''_i$).
	To prove~(\ref{eq:WRL-step}), let $\P^*$ be an auxiliary partition obtained from $\P'$ by subdividing each $U'_i$ into the three parts
	$\{U'_i \cap S_i,\, U'_i \cap T_i,\, U'_i \cap W_i \}$, and similarly for $U''_i,U'''_i$.
	Observe that $\P^*$ refines $\Q$. Furthermore,
	$$q(\P^*)-q(\P') \le \sum_{i=1}^k \Big(\frac{\s{U'_i}\s{V}}{\s{V}^2}+\frac{\s{U''_i}\s{V}}{\s{V}^2} + \frac{\s{U'''_i}\s{V}}{\s{V}^2}\Big) 
	\le k\frac{3(\floor{s}+1)}{\s{V}} \le \frac{4}{b} \le \e^4/2 \;.$$
	Since $\P^*$ refines $\Q$ we have $q(\P^*) \ge q(\Q)$ by Jensen's inequality. Therefore,
	$$q(\P') \ge q(\P^*) - \e^4/2 \ge q(\Q) - \e^4/2 \ge q(\P) + \e^4/2 \;,$$
	which proves~(\ref{eq:WRL-step}).
	Note that $\s{\P'}
	\le bk \le (16/\e^4)\s{\P}$.
	
	Starting with the equipartition $\P_0$ given in the statement, we iteratively apply the above argument as long as the current partition $\P$ is not weak $\e$-regular. It follows from~(\ref{eq:WRL-step}), together with the fact that the potential function $q$ is at most $1$, that a weak $\e$-regular equipartition is obtained after at most $2/\e^4$ iterations. Since the order of the partition increases in each iteration by a factor of at most $16/\e^4$, the order of the final partition increases by a factor of at most
	%
	$$(16/\e^4)^{2/\e^4}
	= (2/\e)^{8/\e^4}
	\le 2^{16/\e^5} \;.$$
	This completes the proof.
\end{proof}


\section{Properties of $\e$-regular Graphs}

For completeness, here we give proofs for the well-known properties used in Section~\ref{sec:removal}.
Recall that we say that $(A,B)$ is an \emph{$(\e,d)$-regular pair} if the bipartite graph between the vertex subsets $A,B$ is $\e$-regular of density $d$.
First, we have the following degree property.
\begin{fact}\label{fact:degreesA}
If $(A,B)$ is an $(\e,d)$-regular pair,
all vertices of $B$ but at most $2\e|B|$ have degree $(d \pm \e)|A|$.
%
\end{fact}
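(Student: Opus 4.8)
The plan is to bound the number of high-degree vertices and the number of low-degree vertices of $B$ separately, each by $\e|B|$, and then combine; the whole point is that $\e$-regularity may be applied with $A'=A$, since $|A| \ge \e|A|$.

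First I would set $B^+ = \{v \in B : e_G(v,A) > (d+\e)|A|\}$, the set of vertices of $B$ whose degree into $A$ is too large, and suppose toward a contradiction that $|B^+| \ge \e|B|$. Applying $\e$-regularity of the pair $(A,B)$ with the subsets $A'=A$ (so $|A'| = |A| \ge \e|A|$) and $B'=B^+$ (so $|B'| \ge \e|B|$), and using $e_G(A,B^+) = \sum_{v\in B^+} e_G(v,A) > (d+\e)|A||B^+|$, we get $d_G(A,B^+) > d+\e = d_G(A,B)+\e$, contradicting the definition of $\e$-regularity. Hence $|B^+| < \e|B|$. Symmetrically, with $B^- = \{v \in B : e_G(v,A) < (d-\e)|A|\}$, if $|B^-| \ge \e|B|$ then the same argument gives $d_G(A,B^-) < d-\e$, again contradicting $\e$-regularity, so $|B^-| < \e|B|$.

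Finally, every vertex of $B$ that lies outside $B^+ \cup B^-$ has $e_G(v,A) = (d\pm\e)|A|$ by definition, and $|B^+ \cup B^-| \le |B^+| + |B^-| < 2\e|B|$, which is exactly the claimed bound. I do not expect any real obstacle here; the only point worth stating carefully is that the size thresholds in the definition of $\e$-regularity are what force the constant $2$ in ``$2\e|B|$'' (one $\e|B|$ for each one-sided deviation), and that taking $A'=A$ is legitimate precisely because $|A| \ge \e|A|$.
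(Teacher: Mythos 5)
Your proof is correct and is essentially the paper's argument: the paper handles the two one-sided deviations via a ``without loss of generality'' and a single contradiction, which is exactly your $B^+$/$B^-$ split spelled out in full. Nothing further is needed.
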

\begin{proof}
Otherwise there is a set $B' \sub B$ of at least $\e|B|$ vertices whose degrees are, without loss of generality, greater than $(d + \e)|A|$. Thus $d(A,B') > d + \e$, a contradiction.
\end{proof}

Next is the so called \emph{slicing lemma}.

\begin{fact}
\label{fact:sliceA}
Let $\a \ge \e > 0$.
Let $(A,B)$ be an $(\e,d)$-regular pair.
If $A' \sub A$, $B' \sub B$ are of size $|A| \ge \a|A|$, $|B| \ge \a|B|$ then the pair $(A',B')$ is $(2\e/\a,\, d \pm \e)$-regular.
\end{fact}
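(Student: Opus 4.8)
The plan is to verify the two assertions of the slicing lemma separately: that $d_G(A',B') = d \pm \e$, and that $(A',B')$ is $(2\e/\a)$-regular. Both follow directly from the definition of $\e$-regularity of $(A,B)$, using only the size hypotheses $|A'| \ge \a|A|$ and $|B'| \ge \a|B|$ together with $\a \ge \e$.

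First I would handle the density. Since $|A'| \ge \a|A| \ge \e|A|$ and $|B'| \ge \a|B| \ge \e|B|$, the $\e$-regularity of $(A,B)$ (applied with the subsets $A'$ and $B'$ themselves) gives $|d_G(A',B') - d_G(A,B)| \le \e$, i.e. $d_G(A',B') = d \pm \e$, which is the second part of the conclusion.

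Next I would check regularity of the pair $(A',B')$. Take arbitrary $A'' \sub A'$ and $B'' \sub B'$ with $|A''| \ge (2\e/\a)|A'|$ and $|B''| \ge (2\e/\a)|B'|$. Then $|A''| \ge (2\e/\a)\cdot \a|A| = 2\e|A| \ge \e|A|$, and likewise $|B''| \ge \e|B|$, so $\e$-regularity of $(A,B)$ applies both to the pair $(A'',B'')$ and (as already used) to $(A',B')$. Hence $|d_G(A'',B'') - d| \le \e$ and $|d_G(A',B') - d| \le \e$, and the triangle inequality yields $|d_G(A'',B'') - d_G(A',B')| \le 2\e \le 2\e/\a$, the last step using $\a \le 1$. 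This is exactly $(2\e/\a)$-regularity of $(A',B')$, completing the proof.

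There is no real obstacle here — the only thing to be attentive to is the bookkeeping with the quantifiers (the subsets of $A'$ of relative size $\ge 2\e/\a$ must be pushed down to subsets of $A$ of relative size $\ge \e$, which is where the factor $2$ and the hypothesis $\a \ge \e$ are both used), and noting that $\a \le 1$ is what makes $2\e$ a valid regularity parameter in place of $2\e/\a$.

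\begin{proof}
We may assume $\a \le 1$. Since $|A'| \ge \a|A| \ge \e|A|$ and $|B'| \ge \a|B| \ge \e|B|$, the $\e$-regularity of $(A,B)$ gives $|d_G(A',B') - d| \le \e$, so $d_G(A',B') = d \pm \e$.

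Now let $A'' \sub A'$ and $B'' \sub B'$ satisfy $|A''| \ge (2\e/\a)|A'|$ and $|B''| \ge (2\e/\a)|B'|$. Then
$$|A''| \ge \frac{2\e}{\a}|A'| \ge \frac{2\e}{\a}\cdot\a|A| = 2\e|A| \ge \e|A|\,,$$
and similarly $|B''| \ge \e|B|$. Hence, by the $\e$-regularity of $(A,B)$, $|d_G(A'',B'') - d| \le \e$. Combining this with $|d_G(A',B') - d| \le \e$, the triangle inequality yields
$$|d_G(A'',B'') - d_G(A',B')| \le 2\e \le \frac{2\e}{\a}\,,$$
where the last inequality uses $\a \le 1$. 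This shows $(A',B')$ is $(2\e/\a)$-regular, which together with $d_G(A',B') = d\pm\e$ completes the proof.
\end{proof}
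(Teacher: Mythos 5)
Your proof is correct and follows essentially the same route as the paper's: push the subsets of $A'$, $B'$ down to subsets of $A$, $B$ of relative size at least $\e$, apply the $\e$-regularity of $(A,B)$ twice, and finish with the triangle inequality and $2\e \le 2\e/\a$. The only cosmetic difference is that the paper verifies the density condition for all subsets of relative size $\e/\a$ (slightly more than the definition of $(2\e/\a)$-regularity requires), whereas you work with exactly the threshold $2\e/\a$; the factor $2$ is really consumed by the triangle inequality rather than by the size bookkeeping, but this does not affect correctness.
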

\begin{proof}
First, $|d(A',B')-d| \le \e$ is immediate as $G$ is $\e$-regular and $\a \ge \e$.
Next, if $X \sub A'$ and $Y \sub B'$ satisfy $|X| \ge (\e/\a)|A'|$ and $|Y| \ge (\e/\a)|B'|$ then 
$|X| \ge \e|A|$ and $|Y| \ge \e|B|$.
Since $(A,B)$ is $(\e,d)$-regular we have
$|d(X,Y)-d(A',B')| \le |d(X,Y)-d|+|d-d(A',B')| \le 2\e \le 2\e/\a$.
\end{proof}

Finally, we have the following codegree property.

\begin{fact}
\label{fact:codegA}
Let the pairs $(A,C),(B,C)$ be $(\e,d)$-regular and $(\e,d')$-regular, respectively.
Write $\codeg(a,b)$ for the number of common neighbors of $a,b$ in $C$, and put $\e'=6\e/d$.
All pairs $(a,b) \in A \times B$ but at most $\e'|A||B|$ satisfy $\codeg(a,b)=(dd' \pm \e')|C|$.
\end{fact}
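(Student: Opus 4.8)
The plan is to run the standard ``slicing''/first-moment argument, tuned so the constants land exactly on $\e'=6\e/d$. First I would dispose of the degenerate case: if $6\e>d$ then $\e'>1$, and the statement is vacuous, since $0\le\codeg(a,b)\le|C|$ and $0\le dd'\le1$ force $\codeg(a,b)=(dd'\pm\e')|C|$ for \emph{every} pair $(a,b)$. So we may assume $d\ge6\e$; in particular $\e<1/6$ and $d-\e\ge5\e>0$.

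Next I would locate the ``typical'' vertices on the $A$-side. Applying Fact~\ref{fact:degreesA} to the pair $(C,A)$ (which is $(\e,d)$-regular by the symmetry of $\e$-regularity), all but at most $2\e|A|$ vertices $a\in A$ satisfy $|N(a)\cap C|=(d\pm\e)|C|$. For such an $a$ put $C_a=N(a)\cap C$, so $(d-\e)|C|\le|C_a|\le(d+\e)|C|$ and in particular $|C_a|\ge\e|C|$; note that $\codeg(a,b)=|N(b)\cap C_a|$. Now fix a typical $a$. Since $|C_a|\ge\e|C|$, $\e$-regularity of $(B,C)$ gives $d(B',C_a)=d'\pm\e$ for every $B'\sub B$ with $|B'|\ge\e|B|$. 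I would use this to control the tail sets $B^{+}=\{b\in B:|N(b)\cap C_a|>(dd'+4\e)|C|\}$ and $B^{-}=\{b\in B:|N(b)\cap C_a|<(dd'-4\e)|C|\}$. If $|B^{+}|\ge\e|B|$, then $d(B^{+},C_a)>(dd'+4\e)|C|/|C_a|\ge(dd'+4\e)/(d+\e)>d'+\e$, where the last inequality is equivalent to $d+d'+\e<4$, which holds; this contradicts the previous sentence, so $|B^{+}|<\e|B|$. Symmetrically, if $B^{-}\neq\emptyset$ then necessarily $dd'>4\e$ (hence also $d'>\e$), and $|B^{-}|\ge\e|B|$ would give $d(B^{-},C_a)<(dd'-4\e)/(d-\e)<d'-\e$ (equivalent to $d+d'-\e<4$), again a contradiction, so $|B^{-}|<\e|B|$. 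Hence for every typical $a$, all but fewer than $2\e|B|$ vertices $b\in B$ satisfy $\codeg(a,b)=(dd'\pm4\e)|C|$.

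Finally I would total up the exceptional pairs. Those $(a,b)$ failing $\codeg(a,b)=(dd'\pm4\e)|C|$ lie in the union of $\{a\text{ not typical}\}\times B$ and $\bigcup_{a\text{ typical}}\{a\}\times(B^{+}\cup B^{-})$, of size at most $2\e|A|\cdot|B|+|A|\cdot2\e|B|=4\e|A||B|$; and since $d\le1$ we have $4\e\le6\e/d=\e'$, so at most $\e'|A||B|$ pairs are exceptional and for the rest the deviation $4\e$ is at most $\e'$, which is exactly what is claimed. I do not expect a real obstacle here — this is routine — the only point requiring care is the constant bookkeeping: one must keep the $\pm\e$ coming from $|C_a|=(d\pm\e)|C|$ from compounding badly with the $\pm\e$ from regularity of $(B,C)$, and verify that the chosen slack $4\e$ in the codegree estimate simultaneously (i) yields a genuine contradiction in both applications of regularity, i.e.\ $d+d'\pm\e<4$, and (ii) stays below $6\e/d$ after using $d\le1$.
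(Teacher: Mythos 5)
Your proof is correct. It shares the paper's first step verbatim: by Fact~\ref{fact:degreesA} applied to the $(\e,d)$-regular pair with $C$, all but $2\e|A|$ vertices $a\in A$ have $|N(a)\cap C|=(d\pm\e)|C|$, and the degenerate case $d<6\e$ is disposed of up front. Where you diverge is the second step: the paper slices, i.e.\ applies Fact~\ref{fact:sliceA} to the pair $(B,\,N(a)\cap C)$ to get a $(2\e/d$-ish$,\,d'\pm\e)$-regular pair and then invokes Fact~\ref{fact:degreesA} a second time, whereas you work directly with the regularity of $(B,C)$ against the fixed set $C_a=N(a)\cap C$ (legitimate since $|C_a|\ge(d-\e)|C|\ge\e|C|$), showing the tail sets $B^{+},B^{-}$ each have size below $\e|B|$ because otherwise their density into $C_a$ would deviate from $d'$ by more than $\e$; your algebra ($(dd'\pm4\e)/(d\pm\e)$ versus $d'\pm\e$ reducing to $d+d'\pm\e<4$) checks out. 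This buys you two things: sharper constants (deviation and exceptional fraction $4\e$ rather than $6\e/d$, comfortably within $\e'$ since $d\le1$), and no need to track how the regularity parameter degrades under slicing — a point where the paper is in fact slightly cavalier, since Fact~\ref{fact:sliceA} with $\a=d-\e$ gives $2\e/(d-\e)$-regularity rather than the stated $2\e/d$, and the paper's final count $2\e+4\e/d\le 6\e/d$ uses that slack. Your only cosmetic slip is claiming $\e<1/6$ where $d\ge6\e$ and $d\le1$ only give $\e\le1/6$, but the inequalities $d+d'\pm\e<4$ that you actually use are unaffected.
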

\begin{proof}
Assume $d \ge 6\e$ as otherwise there is nothing to prove.
Let $a \in A$ with $e(a,C) = (d \pm \e)|C|$ $(\ge \e|C|)$, noting that by Fact~\ref{fact:degreesA} there are at most $2\e|A|$ vertices of $A$ not satisfying this condition.
By Fact~\ref{fact:sliceA}, 
the graph between $B$ and the vertices of $e(a,C)$ is of density $d'':=d' \pm \e$ and is $2\e/d$-regular. Thus, again by Fact~\ref{fact:degreesA}, all vertices $b \in B$ but at most $(4\e/d)|B|$ satisfy $\codeg(a,b) = (d'' \pm 2\e/d)e(a,C) = (d' \pm 3\e/d)(d \pm \e)|C| = (dd' \pm \e')|C|$.
Therefore,
the number of pairs $(a,b)$ not satisfying $\codeg(a,b) = (dd' \pm \e')|C|$ is at most $2\e|A| \cdot |B| + |A| \cdot (4\e/d)|B| \le \e'|A||B|$, as needed.
\end{proof}

\end{document}